\subjclass[2010]{ 37C30, 37D20,  37C40,  30H25, 37D35, 37A05, 37A25, 37A30, 37A50, 37E05, 47A35, 47B65, 60F05, 	60F17, 42B35, 42B35,42C15} 
\keywords{ transfer operator, atomic decomposition, Besov space,  Ruelle,  Perron-Frobenious,  quasi-compact, Lasota-Yorke, decay of correlations, expanding map, decay of correlations, ergodic theory, central limit theorem, almost sure invariance principle}
\thanks{D.S. was partially supported by CNPq 307617/2016-5, CNPq 430351/2018-6 and FAPESP Projeto Tem\'atico 2017/06463-3.}
\title[Tranfers operators, Atomic Decomposition and the Bestiary]{Transfer operators,   atomic decomposition and the Bestiary}
\author[D. Smania]{Daniel Smania}
\address{Departamento de Matem\'atica, Instituto de Ci\^encias Matem\'aticas e da Computa\c{c}\~ao-Universidade de S\~ao Paulo (ICMC/USP), Caixa Postal 668, S\~ao Carlos-SP, Brazil.}
\email{smania@icmc.usp.br}
\urladdr{http://conteudo.icmc.usp.br/pessoas/smania/}
\newtheorem{theorem}{Theorem}[section]
\newtheorem{corollary}{Corollary}[section]
\newtheorem{lemma}[theorem]{Lemma}
\newtheorem{proposition}[theorem]{Proposition}
\theoremstyle{definition}
\newtheorem{remark}[theorem]{Remark}
\newcommand{\secdot}[1]{\arabic{#1}}
\newcommand{\Cll}[2][normal]{%
\global\@namedef{cstt@#1@#2}{}%
\@ifundefined{cstt@name@#2} {\@ifundefined{cstt@namec@#2}{ {\Cl[#1]{#2}}}{ {\lambda_{_{#2}}}}}{{{C_{_{#2}}}}} %
}
\newcommand{\Crr}[1]{\@ifundefined{cstt@name@#1}{   \@ifundefined{cstt@namec@#1}{\Cr{#1}}{\lambda_{_{#1}}}}{C_{_{#1}}}}
\begin{document}

\begin{abstract}   Arbieto and S. recently used atomic decomposition to study transfer operators. We give a long list of old and new expanding dynamical systems for which those results  can be applied, obtaining  the quasi-compactness of  transfer operator  acting  on  Besov spaces of  measure spaces with a good grid.\end{abstract}

\maketitle

\setcounter{tocdepth}{2}
\tableofcontents

\vspace{1cm}
\centerline{ \bf I. INTRODUCTION.}
\addcontentsline{toc}{chapter}{\bf I. INTRODUCTION.}
\vspace{1cm}

In S.  \cite{smania-besov} we defined Besov spaces on measure spaces endowed with a "good grid". This allowed Arbieto and S. \cite{smania-transfer} to give sufficient conditions for  the transfer operator  of maps acting on these measure spaces to be quasi-compact and to satisfy the Lasota-Yorke inequality.  Many nice statistical properties of Besov observables follow.

Bestiaries were popular in the Middle Ages in Europe. Those were a compendium   of  wonderful animals. We offer a compendium of exquisite piecewise expanding maps, and we prove (sometimes conditioned to a priori estimate) the quasi-compactness and Lasota-Yorke inequalities for their transfer operator acting on Besov spaces. We  list  most of the examples in Table 1.  

We order our presentation in such way we go from the simplest one, linear expanding maps  on  the circle,  to the most complex example, piecewise expanding maps on $\mathbb{R}^D$. 

Our first examples are Markovian expanding maps and conformal expanding maps. They do not have discontinuities and their branches have large images. This allows us to give  precise estimates to the essential spectral radius for the transfer operator. These class of examples includes subshifts of finite type and  hyperbolic rational maps acting on its Julia sets.

Intervals maps are our next class of examples. Those include piecewise $C^{1+}$-diffeomorphism  expanding maps, piecewise Bi-Lipchitz maps with $p$-bounded variation jacobian and  Lorenz maps. We also obtain results for piecewise Bi-Lipchitz maps with certain {\it Besov} jacobians, a very general class of potentials, but we need a priori estimate in this case.

The last example is given by  generic piecewise $C^{1+}$-diffeomorphisms on $\mathbb{R}^D$.  This is a more complex situation because large  iterations  deforms shapes  in a more extreme way.

We focus ourselves to obtain the quasi-compactness and the Lasota-Yorke inequality for the transfer operator acting on  Besov spaces. With the exception  of the a recent result by Nakano and  Sakamoto \cite{ns} (see also Baladi and  Holschneider \cite{bh}) for smooth expanding maps on manifolds (no discontinuities) and the the work on Thomine \cite{thomine} on  transfer operators of piecewise $C^{1+}$ expanding maps on manifolds acting  on Sobolev spaces, all  our  results are new,  in particular the  results for Besov spaces on phases spaces with very low regularity, as either symbolic spaces or hyperbolic Julia sets. 

Moreover often these results imply that Besov observables have nice statistical properties, as the almost sure invariance principle and  exponential decay of correlations. We refer to Arbieto and S.  \cite{smania-transfer} for these consequences. 

Another consequence for  most of the  examples  here is that the support of every absolutely continuous invariant  measure is an open subset of phase space (up to a set of zero measure). This, as  far as we know, it is also a new result in some cases, as for  generic piecewise  $C^{1+}$ expanding maps in $\mathbb{R}^D$.

One may wonder if  the atomic decomposition methods in \cite{smania-transfer} could be applied to more classes of maps, as measure expanding solenoidal attractors studied by  Tsujii \cite{sole1},  Avila,  Gou\"ezel,  Tsujii \cite{sole2}, Bam\'on, Kiwi, Rivera-Letelier, Urz\'ua \cite{sole3}, and  maps with critical points. In the latter class  there is a previous study of the Besov  regularity of the density of  invariant measures by Chazottes,  Collet and Schmitt \cite{cha}.

{\small
\begin{table*}[h]
    \centering
        \begin{tabular}{c c c c c}
            \toprule
            \midrule
                 {\tiny Class} &  {\tiny Example}  &  {\tiny p }   & \multirow{1}{*}{\parbox{1.8cm}{\tiny It needs a  priori \\ estimate?} }& \multirow{1}{*}{\parbox{1.8cm}{\tiny Map has  \\ discontinuites?}}  \\ \cmidrule{1-5}
                             \multicolumn{1}{l}{\tiny H\"older jacobian}  & 
                \multicolumn{1}{l}{\tiny Markovian  Maps}& {\tiny $[1,\infty)$}  & {\tiny No}  & {\tiny "No" } \\
                  \cmidrule{1-5} 
                                      \multicolumn{1}{l}{\tiny Complex analytic  map}  &
                \multicolumn{1}{l}{ \multirow{1}{*}{\parbox{2.5cm}{\tiny Conformal \\ expanding repellers} }}& {\tiny $[1,\infty)$}  & {\tiny No}& {\tiny No}   \\
                    \cmidrule{1-5}
                 \multicolumn{1}{l}{ \multirow{4}{*}{\tiny Interval  maps}}& \multicolumn{1}{l}{\multirow{1}{*}{\parbox{2.5cm}{\tiny Bounded variation \\ jacobian} }}& {\tiny $\sim 1$} & {\tiny No} & {\tiny Yes}  \\     
    \cmidrule{2-5}                 
                    \multicolumn{1}{c}{}  &  \multicolumn{1}{l}{\multirow{1}{*}{\parbox{2.5cm}{\tiny  Piecewise \\ $C^{1+}$-smooth maps}}}  & {\tiny $\sim 1$} & {\tiny No} & {\tiny Yes}  \\   
                    \cmidrule{2-5}         \multicolumn{1}{c}{}  &   \multicolumn{1}{l}{\tiny jacobian in $\mathcal{B}^{1/p}_{p,\infty}$} & {\tiny $[1,\infty)$}  & {\tiny Yes} & {\tiny Yes}  \\ 
                    \cmidrule{2-5}    
                  \multicolumn{1}{c}{}  &   \multicolumn{1}{l}{\tiny Lorenz maps} & {\tiny $1$}  & {\tiny No} & {\tiny Yes}  \\ 
                    \cmidrule{2-5}    
                  \multicolumn{1}{c}{}  &   \multicolumn{1}{l}{\tiny Tent family } & {\tiny $[1,\infty)$}  & {\tiny No} & {\tiny No}  \\ 
                  \midrule
                    \multicolumn{1}{l}{\tiny Cowieson-type  maps}    &
                \multicolumn{1}{l}{\tiny $C^{1+}$ Piecewise Smooth Maps}& {\tiny $\sim 1$}  & {\tiny Generic}  & {\tiny Yes}  \\
            \midrule
            \bottomrule
        \end{tabular}
        \centering
        \vspace{3mm}
        \caption{The Bestiary. The first column  describes for each values of $p$ the transfer operator is quasi-compact on $\mathcal{B}^s_{p,q}$.  The second column tells us if we need an a priori estimate as an assumption. The last column  says if we allow discontinuities on the maps under consideration. }
        \label{tab:sam_count}
\end{table*}
}

\newpage
 \vspace{1cm}
\centerline{ \bf II.  THE MAIN INGREDIENTS}
\addcontentsline{toc}{chapter}{\bf II.  THE MAIN INGREDIENTS}
\vspace{1cm}

\section{Regular Branches}

\subsection{Bilipschitz maps on Ahlfors-regular quasi-metric spaces.} Let $I$ be a metric space with a quasi-metric $d$ and a finite  measure $m$. Then $(I,d,M)$ is an  Ahlfors-regular quasi-metric space if there  is $D$, $\Cll{iop}$  and $r_0$ with the following property.
  For every $x\in I$ and $r \in (0,r_0)$ 
\begin{equation}\label{geo}   \frac{1}{\Crr{iop}} r^D\leq    m(B_d(x,r))\leq \Crr{iop} r^D.\end{equation}
There is a good grid $\mathcal{P}$ in $I$ and  there is $\eta, \Crr{de111d}, \Crr{de1111d},\Crr{de00d},\Crr{ood}\geq 0$ and $\Crr{de000d}\in (0,1)$ with the following property (see Proposition \ref{homo-fgh} in  \cite{smania-homo}). For every $Q\in \mathcal{P}^k$ there is $z_Q\in Q$ satisfying 
\begin{equation}\label{ppd} B_{d}(z_Q,\Cll{de111d} \Crr{de000d}^k)  \subset Q,\end{equation}
\begin{equation}\label{pp2d} diam_d \  Q \leq \Cll{de1111d} \Crr{de000d}^k  \end{equation}
and
\begin{equation}\label{pppd}  m\{ x\in Q\colon \  d(x,I\setminus Q)\leq \Cll{de00d} t  \Cll[c]{de000d}^k    \}\leq \Cll{ood} t^{\eta} m(Q).\end{equation} 
We can consider the space $\mathcal{B}^s_{p,q}$ associated with $(I,m,\mathcal{P})$. If $\eta > sp$ then $\mathcal{B}^s_{p,q}$ indeed does not depend on the particular good grid we choose and it is called the  Besov space of $(I,m)$.  A classical example to keep in mind is $[0,1]^n$ endowed with the euclidean metric and the Lebesgue measure. In this case we can take the usual dyadic good grid and $\eta=1$.

\begin{proposition} \label{lip} Suppose  $Dsp < \hat{\eta}$. There is $\Cll{abs3ww}$ with the following property.  Let $\Omega, \Omega' \subset I$ be open sets and  $h\colon \Omega \rightarrow  \Omega'$ be a bilipschitz map.   In particular there  is $\Cll{lm11},\Cll{lm1} > 0$ such that for every $x,y \in  \Omega$
\begin{equation} \label{lip2} \Crr{lm11}d(x,y) \leq   d(h(x),h(y)) \leq \Crr{lm1}d(x,y)\end{equation}
and  there  is $\Cll{lms}, \Cll{lm} > 0$ such that for every measurable set $A \subset \Omega$
\begin{equation} \label{mc}  \Crr{lms} \leq    \frac{|h(A)|}{|A|} \leq \Crr{lm}.\end{equation}Let $Q\subset I$ be an open subset such that there is $z_Q\in Q$ satisfying 
\begin{equation}\label{pphe} B_{d}(z_Q,\Cll{de1w11a} diam_d \  Q )  \subset Q,\end{equation}
and 
\begin{equation}\label{pphe2} m\{ x\in Q\colon \  d(x,I\setminus Q)\leq \Cll{de0w0x} t  \ diam_d \  Q   \}\leq \Cll{owox} t^{\hat{\eta}} m(Q)\end{equation}
for some $\Crr{de1w11a}$, $\Crr{de0w0x}$, $\Crr{owox}$, and  $\hat{\eta} > 0$.  Then there is $z_{h(Q)}$ such that 
$$B_d(z_{h(Q)}, \Crr{de1w11a} \frac{\Crr{lm11}}{\Crr{lm1}}  \ diam_d \ h(Q) )\subset h(Q).$$
and
$$ m(\{x\in h(Q)\colon \  d(x,I\setminus h(Q))  \leq \Crr{de0w0x}\frac{\Crr{lm11}}{\Crr{lm1}} t \  diam_d \ h(Q)     \})\leq  \Crr{owox} \frac{ \Crr{lm}}{\Crr{lms}}   t^{\hat{\eta}} m(h(Q)).$$
Moreover  $h(Q)$ is a 
 $(1-sp, \Cll{abs3}, \Crr{de000d}^{\hat{\eta}-Dsp})$-regular domain,  with 
 $$\Crr{abs3}= \Crr{abs3ww}  \frac{ \Crr{owox} \frac{ \Crr{lm}}{\Crr{lms}}}{\big( \Crr{de0w0x}\frac{\Crr{lm11}}{\Crr{lm1}}\big)^{\hat{\eta}} \big( \Crr{de1w11a} \frac{\Crr{lm11}}{\Crr{lm1}} \big)^D}.$$ 
\end{proposition}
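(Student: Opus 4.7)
The plan is to set $z_{h(Q)}:=h(z_Q)$ and transfer both the inner-ball condition \reff{pphe} and the collar bound \reff{pphe2} across $h$ by elementary use of the bilipschitz estimates \reff{lip2} and the Jacobian estimates \reff{mc}. Once those two bounds are established for $h(Q)$, the $(1-sp,\Crr{abs3},\Crr{de000d}^{\hat{\eta}-Dsp})$-regularity is read off from the corresponding packaging proposition in \cite{smania-homo}, with the constants produced as described below.

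For the inner ball, set $r:=\Crr{de1w11a}\frac{\Crr{lm11}}{\Crr{lm1}}\,\text{diam}_d h(Q)$ and pick any $y\in B_d(h(z_Q),r)$, writing $y=h(w)$ through the unique preimage in $\Omega$. The lower estimate in \reff{lip2} gives $d(z_Q,w)\le r/\Crr{lm11}$, and since $\text{diam}_d h(Q)\le \Crr{lm1}\,\text{diam}_d Q$ by the upper estimate, this collapses to $d(z_Q,w)\le \Crr{de1w11a}\,\text{diam}_d Q$, so \reff{pphe} forces $w\in Q$ and hence $y\in h(Q)$.

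For the collar, let $A_t:=\{x\in h(Q):\,d(x,I\setminus h(Q))\le \Crr{de0w0x}\tfrac{\Crr{lm11}}{\Crr{lm1}}\,t\,\text{diam}_d h(Q)\}$. For $x=h(w)\in A_t$ the lower bilipschitz bound transports the distance estimate back to $\Omega$, yielding $d(w,I\setminus Q)\le d(x,I\setminus h(Q))/\Crr{lm11}$, which after substitution and the upper diameter bound becomes $d(w,I\setminus Q)\le \Crr{de0w0x}\,t\,\text{diam}_d Q$. Thus $h^{-1}(A_t)$ sits inside the collar governed by \reff{pphe2}, whose $m$-measure is at most $\Crr{owox}\,t^{\hat{\eta}} m(Q)$, and pushing forward via \reff{mc} with $m(A_t)\le \Crr{lm}\,m(h^{-1}(A_t))$ and $m(Q)\le \Crr{lms}^{-1}m(h(Q))$ produces the claimed $\Crr{owox}\tfrac{\Crr{lm}}{\Crr{lms}}\,t^{\hat{\eta}}\,m(h(Q))$.

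Feeding these two bounds into the regularity statement of \cite{smania-homo} produces $\Crr{abs3}$: a factor $(\Crr{de0w0x}\Crr{lm11}/\Crr{lm1})^{-\hat{\eta}}$ arises from rescaling the collar bound so that $t$ aligns with a canonical good-grid cell, a factor $(\Crr{de1w11a}\Crr{lm11}/\Crr{lm1})^{-D}$ comes from comparing $m(h(Q))$ with the inner-ball measure via the Ahlfors bound \reff{geo}, and the condition $Dsp<\hat{\eta}$ produces the strictly positive decay exponent $\hat{\eta}-Dsp$ sitting on $\Crr{de000d}$. The one genuinely delicate point is that $d(x,I\setminus h(Q))$ need not be realized by a point of $\Omega'\setminus h(Q)=h(\Omega\setminus Q)$ — in principle it could be realized by a point of $I\setminus\Omega'$, where the bilipschitz relation is unavailable — but in the collar regime relevant to \reff{pphe2} the inner ball already produced above keeps us safely inside $\Omega'$, and the rest of the proof is just careful bookkeeping of which of $\Crr{lm11},\Crr{lm1},\Crr{lms},\Crr{lm}$ enters at each step.
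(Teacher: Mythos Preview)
Your proposal is correct and follows essentially the same route as the paper: set $z_{h(Q)}=h(z_Q)$, push the inner-ball and collar estimates through the bilipschitz bounds \reff{lip2} and the measure comparison \reff{mc}, then invoke the regularity proposition from \cite{smania-homo}. If anything, you are slightly more explicit than the paper, which simply asserts the inner-ball inclusion and the collar pullback without spelling out the steps; your remark about where $d(x,I\setminus h(Q))$ might be realized is a subtlety the paper does not discuss at all.
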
 
\begin{proof}   Note    that  $z_{h(Q)}=h(x_Q)$ satisfies
\begin{equation}\label{hip1} B_d(z_{h(Q)},\Crr{de1w11a} \frac{\Crr{lm11}}{\Crr{lm1}}  \ diam_d \ h(Q) )\subset h(Q).\end{equation}
Suppose that $x \in h(Q)$ and it  satisfies
$$d(x,I\setminus h(Q))\leq  \Crr{de0w0x}\frac{\Crr{lm11}}{\Crr{lm1}} t  \ diam_d \ h(Q).$$
Then
$$d(h^{-1}(x),I\setminus Q)\leq  \Crr{de0w0x} \  t \ diam_d \ Q.$$
Consequently
$$h^{-1}\{ x\in h(Q)\colon \  d(x,I\setminus h(Q))\leq \Crr{de0w0x}\frac{\Crr{lm11}}{\Crr{lm1}}  \ t  \ diam_d \ h(Q)   \}$$
is contained in $$\{ x\in Q\colon \  d(x,I\setminus Q)\leq \Crr{de0w0x} t \  diam_d \ Q   \},$$
so
\begin{align*} 
&m(h^{-1}\{ x\in h(Q)\colon \  d(x,I\setminus h(Q))\leq \Crr{de0w0x}\frac{\Crr{lm11}}{\Crr{lm1}} t  \ diam_d \ h(Q)    \})\\
&\leq  m(\{ x\in Q\colon \  d(x,I\setminus Q)\leq \Crr{de0w0x}t  \ diam_d \ Q    \})\\
&\leq \Crr{owox} t^{\hat{\eta}} m(Q)\\
&\leq  \frac{\Crr{owox}}{\Crr{lms}}  t^{\hat{\eta}} m(h(Q)).
\end{align*} 
and finally
\begin{align}\label{hip2} &m(\{x\in h(Q)\colon \  d(x,I\setminus h(Q))  \leq \Crr{de0w0x}\frac{\Crr{lm11}}{\Crr{lm1}}   t \  diam_d \ h(Q)     \}) \nonumber \\
&\leq \Crr{lm} m(h^{-1}\{ x\in h(Q)\colon \  d(x,I\setminus h(Q))  \leq \Crr{de0w0x}\frac{\Crr{lm11}}{\Crr{lm1}}   t \  diam_d \ h(Q)     \})\nonumber \\
&\leq  \Crr{owox} \frac{ \Crr{lm}}{\Crr{lms}} t^{\hat{\eta}} m(h(Q)).
\end{align}
By (\ref{hip1}), (\ref{hip2}) and Proposition \ref{homo-indicator} in  \cite{smania-homo} it follows that $h(Q)$ is a $(1-sp, \Crr{abs3}, \Crr{de000d}^{\hat{\eta}-Dsp})$-regular domain.
\end{proof}

\begin{corollary} There is $\Cll{ggg}$ such that  for every $Q\in \mathcal{P}$   the set $h(Q)$ is 
a  $(1-sp, \Cll{abs33}, \Crr{de000d}^{\eta-Dsp})$-regular domain, with
$$\Crr{abs33}= \Crr{abs3ww}  \frac{\Crr{ood} \frac{ \Crr{lm}}{\Crr{lms}}}{\big( \frac{\Crr{de00d}}{\Crr{de1111d}}\frac{\Crr{lm11}}{\Crr{lm1}}\big)^{\eta} \big( \frac{\Crr{de111d}}{\Crr{de1111d}}  \frac{\Crr{lm11}}{\Crr{lm1}} \big)^D}.$$ 
\end{corollary}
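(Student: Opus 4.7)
The plan is to check that each element $Q$ of the good grid $\mathcal{P}$ satisfies the hypotheses of Proposition \ref{lip} (namely \reff{pphe} and \reff{pphe2}) with explicit constants, and then simply invoke that proposition. The key observation is that the "good grid" axioms \reff{ppd}, \reff{pp2d}, \reff{pppd} are stated in terms of the scale $\Crr{de000d}^k$, while Proposition \ref{lip} is stated in terms of $diam_d\, Q$; comparing the two is an immediate consequence of \reff{pp2d}.

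First I would translate \reff{ppd} into \reff{pphe}. Since $diam_d\, Q \leq \Crr{de1111d}\Crr{de000d}^k$ by \reff{pp2d}, we have $(\Crr{de111d}/\Crr{de1111d})\, diam_d\, Q \leq \Crr{de111d}\Crr{de000d}^k$, so \reff{ppd} gives
\[
B_d\!\left(z_Q,\tfrac{\Crr{de111d}}{\Crr{de1111d}}\,diam_d\, Q\right)\subset B_d(z_Q,\Crr{de111d}\Crr{de000d}^k)\subset Q,
\]
which is \reff{pphe} with $\Crr{de1w11a}=\Crr{de111d}/\Crr{de1111d}$.

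Next I would translate \reff{pppd} into \reff{pphe2}. Using \reff{pp2d} again, $(\Crr{de00d}/\Crr{de1111d})\, t\, diam_d\, Q \leq \Crr{de00d}\, t\, \Crr{de000d}^k$, so
\[
\{x\in Q\colon d(x,I\setminus Q)\leq \tfrac{\Crr{de00d}}{\Crr{de1111d}}\,t\, diam_d\, Q\}
\subset
\{x\in Q\colon d(x,I\setminus Q)\leq \Crr{de00d}\,t\,\Crr{de000d}^k\},
\]
and \reff{pppd} bounds the measure of the right-hand set by $\Crr{ood} t^{\eta} m(Q)$. This is exactly \reff{pphe2} with $\Crr{de0w0x}=\Crr{de00d}/\Crr{de1111d}$, $\Crr{owox}=\Crr{ood}$, and $\hat\eta=\eta$.

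Finally, I would plug these four identifications ($\hat\eta=\eta$, $\Crr{de1w11a}=\Crr{de111d}/\Crr{de1111d}$, $\Crr{de0w0x}=\Crr{de00d}/\Crr{de1111d}$, $\Crr{owox}=\Crr{ood}$) into the formula for $\Crr{abs3}$ in Proposition \ref{lip}. The resulting expression matches the stated $\Crr{abs33}$ verbatim, and the regularity exponent becomes $\Crr{de000d}^{\eta-Dsp}$, concluding the proof. I do not expect any genuine obstacle here; the corollary is essentially the specialization of Proposition \ref{lip} from general sets satisfying \reff{pphe}--\reff{pphe2} to the grid elements of an Ahlfors-regular quasi-metric space, and the only care needed is bookkeeping to ensure that $diam_d\, Q$ and $\Crr{de000d}^k$ are compared in the correct direction (so that inclusions of the relevant sets go the right way).
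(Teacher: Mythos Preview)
Your proposal is correct and follows exactly the same approach as the paper's proof, which simply records the two displayed facts
\[
B_d\!\left(z_Q,\tfrac{\Crr{de111d}}{\Crr{de1111d}}\,diam_d\, Q\right)\subset Q
\quad\text{and}\quad
m\{x\in Q\colon d(x,I\setminus Q)\leq \tfrac{\Crr{de00d}}{\Crr{de1111d}}\,t\, diam_d\, Q\}\leq \Crr{ood} t^{\eta} m(Q)
\]
and (implicitly) invokes Proposition~\ref{lip}. Your write-up is in fact more detailed than the paper's, since you spell out the use of \reff{pp2d} to pass from the scale $\Crr{de000d}^k$ to $diam_d\, Q$.
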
 
\begin{proof} Note that 
$$B_{d}(z_Q, \frac{\Crr{de111d}}{\Crr{de1111d}}  \ diam_d Q )  \subset Q,$$
and
$$m\{ x\in Q\colon \  d(x,I\setminus Q)\leq \frac{\Crr{de00d}}{\Crr{de1111d}} t  \ diam_d Q    \}\leq \Crr{ood} t^{\eta} m(Q).$$

\subsection{$C^{1}$-diffeomorphisms on $\mathbb{R}^D$.} A $D$-cube in $\mathbb{R}^D$ is a set $K\subset \mathbb{R}^D$ defined as
$$K =\{ x_0 + \sum_{i=1}^D \alpha_i v_i, \ \alpha _i \in [0,1]\},$$
where $x_0 \in \mathbb{R}^D$ and $B=\{v_1,\dots, v_n\}$ is a basis of $\mathbb{R}^D$. We can consider the Lebesgue measure $m_K$ on $K$ normalized such that $m_K(K)=1$ and the {\it dyadic} grid $\mathcal{D}_K$ defined as $Q\in \mathcal{D}_K^m$ if there are integers  $0\leq j_i < 2^m$ such that 
$$Q=\{  x_0 + \sum_{i=1}^D \alpha_i v_i, \ \alpha _i \in [\frac{j_i}{2^m}, \frac{j_i+1}{2^m}]\}.$$
Of course $\mathcal{D}_K$ is good grid in $(K,m_K)$.  If we consider the metric $d_K$ on $K$ such that  $d_K(x,y)=|x-y|_K$, where $|\cdot|_k$ comes from an inner product that turns $B$ into an orthonormal basis then $(K, d_k, m_k)$ is an $D$-dimensional Alhfors regular metric space and $\mathcal{P}=\mathcal{D}_k$ is good grid on it that satisfies  (\ref{ppd}), (\ref{pp2d}) and (\ref{pppd}), with $\eta=1$ and  the constants that appears there may be chosen such that  they {\it do not depend on the the chosen orthonormal basis and $D$-cube $K$}. 

The  corresponding Besov space $\mathcal{B}^s_{p,q}(K,m,\mathcal{D}_K)$, with $0 < s < 1/p$, $p\in [1,\infty)$ and $q\in [1,\infty]$,  coincides with the classical  Besov space in the homogeneous space $(K,m_K)$ (see \cite{smania-homo}). 

\begin{proposition} \label{ant}  Let  $(K,m_K,\mathcal{P})$ be  a measure space with a good grid where $K$ is a compact subset of $\mathbb{R}^D$, $m_K$ is the Lebesgue measure up to a scaling factor and  $\mathcal{P}$ is a good grid  satisfying (\ref{geo}),(\ref{ppd}), (\ref{pp2d}) and (\ref{pppd}) taking $d$ as the  euclidean distance multiplied by a factor. Then there is $\Cll{esss}$, that depends only on the constants that appears in these conditions, such that the following holds. Let 
$$0 < \beta_1 \leq \beta_2 \leq \cdots \leq \beta_D$$
and 
$$Q=    F(\Pi_{i=1}^D [0,\beta_i]),$$
where $F$ is an isometry of $\mathbb{R}^D$.  Suppose that $F(Q)\subset K$. Then   $Q$ is a $(1-sp,\Crr{esss}(\Pi_{i\neq 1} \beta_i/\beta_1)^{sp}, \Crr{de000d}^{1-Dsp})$-regular domain on $(K,m_K,\mathcal{P})$. \end{proposition}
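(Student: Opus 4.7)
The plan is to verify the geometric conditions (\ref{pphe}) and (\ref{pphe2}) for $Q = F(R_0)$ with $R_0 = \prod_{i=1}^D [0,\beta_i]$, and then apply Proposition \ref{homo-indicator} of \cite{smania-homo} to conclude. Since $F$ is an isometry of $\mathbb{R}^D$, distances, Lebesgue volumes and boundary-layer measures are preserved between $R_0$ and $Q$, so every estimate for $R_0$ transfers verbatim to $Q$, and the problem reduces to a geometric analysis of the standard axis-aligned box.

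First I would establish (\ref{pphe}) and (\ref{pphe2}) with $\hat{\eta} = 1$. The center of $R_0$ satisfies $B_d(\cdot, \beta_1/2) \subset R_0$, which, after normalizing by $\text{diam}_d R_0 \leq \sqrt{D}\,\beta_D$, gives (\ref{pphe}). For (\ref{pphe2}) the standard slab bound
\begin{equation*}
m\bigl(\{x \in R_0 : d(x,\mathbb{R}^D \setminus R_0) \leq \delta\}\bigr) \;\leq\; 2\delta \sum_{i=1}^D \prod_{j \neq i} \beta_j
\end{equation*}
holds for $\delta \leq \beta_1/2$, yielding (\ref{pphe2}) with $\hat{\eta} = 1$ and constants scaling like $\beta_D/\beta_1$. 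A direct application of Proposition \ref{homo-indicator} with these raw constants, however, would produce a regularity constant of order $(\beta_D/\beta_1)^{D+1}$, which is coarser than the advertised $\Crr{esss}(\prod_{i\neq 1}\beta_i/\beta_1)^{sp}$.

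The essential refinement is to decompose $R_0$ into $N \sim \prod_{i \neq 1}\lfloor \beta_i/\beta_1 \rfloor$ essentially-disjoint translates of the cube $[0,\beta_1]^D$, plus a thin residual boundary layer. Each sub-cube has aspect ratio one and therefore satisfies (\ref{pphe}) and (\ref{pphe2}) with constants depending only on $D$ and not on the $\beta_i$; Proposition \ref{homo-indicator} applied to each sub-cube yields a $(1-sp, C_0, \Crr{de000d}^{1-Dsp})$-regular domain with $C_0$ universal. Aggregating the $N$ resulting estimates through the disjoint-union clause of Proposition \ref{homo-indicator} in \cite{smania-homo} then produces a composite constant proportional to $N^{sp} = (\prod_{i\neq 1}\beta_i/\beta_1)^{sp}$, matching the advertised form. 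The isometry $F$ finally transfers this regular-domain bound from $R_0$ to $Q \subset K$ with no change of constants, since all bilipschitz and volume ratios of $F$ are $1$, and this is also why one may formally view the situation through Proposition \ref{lip} with trivial distortion factors.

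The main obstacle is precisely this aggregation step: one must verify that the collection of $N$ sub-cubes, together with the residual boundary strips along $\partial R_0$ (of total volume $O(\beta_1 \prod_{i \neq 1}\beta_i)$), combines into a single $(1-sp, O(N^{sp}), \Crr{de000d}^{1-Dsp})$-regular domain rather than $(1-sp, O(N), \cdot)$. This is the point at which the Besov-type sub-additivity built into the regular-domain framework of \cite{smania-homo} supplies the sharp exponent $sp$ on the number of sub-cubes (reflecting that Besov norms of disjointly supported indicators aggregate in an $\ell^p$-fashion weighted by the smoothness index $s$), and where the stated form of $\Crr{esss}(\prod_{i\neq 1}\beta_i/\beta_1)^{sp}$ ultimately originates.
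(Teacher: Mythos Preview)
Your approach is genuinely different from the paper's, and while the core idea is sound, the crucial step is left as an appeal to unspecified machinery rather than carried out.

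The paper does \emph{not} pre-decompose $Q$ into sub-cubes of side $\beta_1$. Instead it builds the Whitney-type families $\mathcal{F}^k(Q)\subset\mathcal{P}^k$ directly from the good grid: $\mathcal{F}^{k_0}(Q)$ is the set of level-$k_0$ grid elements contained in $Q$, and $\mathcal{F}^{k+1}(Q)$ consists of the level-$(k+1)$ elements in the leftover. Using the slab bound $m\{x:d(x,\mathbb{R}^D\setminus Q)\le t\beta_1\}\le 2Dt\,m(Q)$ (which you also note), the total measure of $\mathcal{F}^k(Q)$ is controlled by $\lambda_{\ref{pppd}}^{\,k-k_0(Q)}m(Q)$; combining this with $|P|\sim\lambda_{\ref{pppd}}^{Dk}$ one computes $\sum_{P\in\mathcal{F}^k(Q)}|P|^{1-sp}$ explicitly. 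The factor $\bigl(\prod_{i\ne1}\beta_i/\beta_1\bigr)^{sp}$ then drops out of the single identity $\lambda_{\ref{pppd}}^{-Dsp\,k_0(Q)}m(Q)^{sp}\sim\beta_1^{-Dsp}\bigl(\prod_i\beta_i\bigr)^{sp}$, since $k_0(Q)$ is governed by the \emph{smallest} side $\beta_1$. No aggregation over sub-cubes is needed.

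Your sub-cube route can also be made to work, but the step you flag as ``the main obstacle'' is exactly where your write-up becomes hand-waving. There is no ``disjoint-union clause of Proposition \ref{homo-indicator}'' to cite; you must do the arithmetic yourself. If $Q=\bigsqcup_{j=1}^N Q_j$ with each $Q_j$ a $(1-sp,C_0,\lambda)$-regular domain, $|Q_j|=|Q|/N$, and $k_0(Q_j)\approx k_0(Q)$ (both comparable to $\log\beta_1$, which you should state), then taking $\mathcal{F}^k(Q)=\bigsqcup_j\mathcal{F}^k(Q_j)$ gives
\[
\sum_{P\in\mathcal{F}^k(Q)}|P|^{1-sp}\le C_0\,\lambda^{k-k_0}\sum_{j=1}^N|Q_j|^{1-sp}=C_0\,\lambda^{k-k_0}\,N\Bigl(\tfrac{|Q|}{N}\Bigr)^{1-sp}=C_0\,N^{sp}\,\lambda^{k-k_0}\,|Q|^{1-sp},
\]
which is the desired $N^{sp}$. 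This is the ``Besov-type sub-additivity'' you allude to, and it is a two-line computation, not a black box. Finally, the ``residual boundary layer'' is a distraction: partition each side $[0,\beta_i]$ into $\lfloor\beta_i/\beta_1\rfloor$ equal pieces so that every sub-box has all sides in $[\beta_1,2\beta_1)$ and hence bounded aspect ratio, eliminating any leftover.
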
 
\begin{proof} Note that if $\Cll{au}^{-1}d$ is the euclidean distance then (\ref{geo}) implies 
$$ \frac{\Crr{auu}}{\Crr{au}^D\Crr{iop}}    \leq \frac{m(A)}{m_K(A)}\leq \frac{\Crr{auu}}{\Crr{au}^D} \Crr{iop}$$
for every measurable set $A$, where the constant $\Cll{auu}$ is universal and $m$ is the Lebesgue measure (without any normalization). Moreover 
$$\frac{\Crr{de111d}}{ \Crr{au}} \Crr{de000d}^{k_0(Q)} \leq   \beta_1\leq \frac{2\Crr{de1111d}  }{\Crr{de000d}\Crr{au}} \Crr{de000d}^{k_0(Q)} .$$
We claim that 
$$m(x\in \mathbb{R}^D\colon \ d(x,\mathbb{R}^D\setminus Q) < t \Crr{au}  \beta_1)\leq 2Dtm(Q).$$
It is enough to prove the claim for the case $F=Id$.    We have that 
$$\{ x\in \mathbb{R}^D\colon \ d(x,\mathbb{R}^D\setminus Q) \leq  t \Crr{au} \beta_1 \}$$
is contained in
$$\cup_j  \{(x_1,\dots,x_D)\colon \ x_i \in [0,\beta_i] \text{ for $i\neq j$, and } x_j \in [0,t\beta_1]\cup [\beta_j- t\beta_1,\beta_j]   \},$$
so
\begin{align*} &m(x\in \mathbb{R}^D\colon \ d(x,\mathbb{R}^D\setminus Q) \leq  t \Crr{au} \beta_1) \\
& \leq \sum_j 2t\beta_1 \Pi_{i\neq j} \beta_i\\
& \leq 2t \big( \sum_j \frac{\beta_1}{\beta_j}  \big) \Pi_{i} \beta_i\\
& \leq 2Dtm(Q).
\end{align*} 
This proves the claim. Define $\mathcal{F}^k(Q)\subset \mathcal{P}^k$ recursively as
$$\mathcal{F}^{k_0}(Q)=\{  P \in \mathcal{P}^{k_0}\colon \ P\subset Q\}$$
and 
$$\mathcal{F}^{k+1}(Q)=\{  P \in \mathcal{P}^{k}\colon \ P\subset Q\setminus \cup_{j\leq k} \cup_{W\in \mathcal{F}^j(Q)} W \}.$$
Note that if $d(x,\mathbb{R}^n\setminus Q) >  \Crr{de1111d} \Crr{de000d}^{k-1}$ then  there is 
$W\in  \cup_{j\leq k-1} \mathcal{F}^j(Q)$ such that $x\in W$, so
\begin{align*} \sum_{P \in \mathcal{F}^k(Q)} m(P)&\leq m(x\in \mathbb{R}^D\colon \ d(x,\mathbb{R}^D\setminus Q) <  \Crr{de1111d} \Crr{de000d}^{k-1})\\
&\leq 2D \Crr{de1111d}  \frac{\Crr{de000d}^{k-1}}{\Crr{au} \beta_1} m(Q) \leq \frac{2D \Crr{de1111d}}{ \Crr{de111d} \Crr{de000d}}\Crr{de000d}^{k-k_0(Q)}m(Q).
\end{align*} 
Since
$$\frac{\Crr{auu}}{\Crr{au}^D\Crr{iop}^2}  \Crr{de111d}^D \Crr{de000d}^{Dk}\leq m(P)\leq \frac{\Crr{auu}\Crr{iop}^2}{\Crr{au}^D}  \Crr{de1111d}^D \Crr{de000d}^{Dk}$$
for every $P\in \mathcal{F}^k(Q)$,
we have that 
$$\# \mathcal{F}^k(Q)\leq  \frac{2D \Crr{de1111d} \Crr{au}^D}{\Crr{auu} \Crr{de111d}^{1+D}\Crr{de000d}}\Crr{de000d}^{k-k_0(Q)} \Crr{iop}^2 \Crr{de000d}^{-Dk}m(Q).$$
Denote
$$\Cll{ess}= \frac{2D \Crr{de1111d}\Crr{iop}^2}{ \Crr{de111d}^{1+D}\Crr{de000d}}.$$ We conclude that
\begin{align*} &\sum_{P \in \mathcal{F}^k(Q)} m_K(P)^{1-sp} \\
& \leq \big(  \frac{\Crr{au}^D\Crr{iop}}{\Crr{auu}}\big)^{1-sp}   \sum_{P \in \mathcal{F}^k(Q)} m(P)^{1-sp}  \\
&\leq \big(  \frac{\Crr{au}^D\Crr{iop}}{\Crr{auu}}\big)^{1-sp}   \Crr{ess} \frac{\Crr{au}^D}{\Crr{auu}}\Crr{de000d}^{k-k_0(Q)}   \Crr{de000d}^{-Dk} \Crr{de1111d}^{D(1-sp)}\big( \frac{\Crr{auu}}{\Crr{au}^D} \Crr{iop}^2 \big)^{1-sp}\Crr{de000d}^{Dk(1-sp)}  m(Q)\\
&\leq \Crr{ess}\Crr{de1111d}^{D(1-sp)}  \Crr{iop}^{3(1-sp)} \frac{\Crr{au}^D}{\Crr{auu}} \Crr{de000d}^{k-k_0(Q)} \Crr{de000d}^{-Dsp(k-k_0(Q))}\Crr{de000d}^{-Dspk_0(Q)} m(Q)^{sp}   m(Q)^{1-sp}\\
&\leq \Crr{ess}\Crr{de1111d}^{D(1-sp)}  \Crr{iop}^{3(1-sp)} \frac{\Crr{au}^D}{\Crr{auu}}\Crr{de000d}^{(1-Dsp)(k-k_0(Q))} \frac{2^{Dsp}\Crr{de1111d}^{Dsp}  }{\Crr{de000d}^{Dsp}\Crr{au}^{Dsp}}  \Big( \Pi_{i\neq 1}  \frac{\beta_i}{\beta_1}  \Big)^{sp} \big(\frac{\Crr{auu}}{\Crr{au}^D} \Crr{iop} \big)^{1-sp}  m_K(Q)^{1-sp}\\
&\leq \Crr{esss} \Crr{de000d}^{(1-Dsp)(k-k_0(Q))}  \Big( \Pi_{i\neq 1}  \frac{\beta_i}{\beta_1}  \Big)^{sp}  m_K(Q)^{1-sp},
\end{align*} 
where it is worth noting  that  $\Crr{esss}$ does not depend on the normalising factor $\Crr{au}$. 
\end{proof}

\begin{proposition} \label{qwe}Let $(K,m_K,\mathcal{P})$ be  a measure space with a good grid as in Proposition \ref{ant}.  For every  small $\delta$ there is $\Cll{essss}$ such that the following holds.  Let $A\colon \mathbb{R}^D\rightarrow \mathbb{R}^D$ be an invertible  linear transformation. Let 
$$0 < \alpha_1 \leq  \dots \leq  \alpha_D $$
be such that $\{\alpha_i^2\}_i$ are the  eigenvalues of $AA^\star$, repeating the eigenvalues  the  number of times  corresponding to  its multiplicities. Let $W$ be a bounded open set satisfying (\ref{pphe}) and (\ref{pphe2}), with $Dsp < \min\{ \hat{\eta},\eta\}$.  If $A(W)\subset K$ then $A(W)$ is a $$(1-sp, \Crr{essss}  (\Pi_{i\neq 1}  \frac{\alpha_i}{\alpha_1})^{sp}, \Crr{de000d}^{(1-\delta)(\hat{\eta}-Dsp)})$$ regular domain in $(K,m,\mathcal{P})$. The constant $\Crr{essss}$ depends only on  $\delta$ and the constants in (\ref{ppd}), (\ref{pp2d}) and (\ref{pppd}) for the good grid $\mathcal{P}$ considering $d$ as either the euclidean distance or a multiply of it. 
\end{proposition}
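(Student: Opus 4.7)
My plan is to begin with the singular value decomposition $A=U\Lambda V$, where $U,V$ are orthogonal and $\Lambda=\mathrm{diag}(\alpha_1,\ldots,\alpha_D)$. Since $V$ is an isometry, $V(W)$ satisfies (\ref{pphe}) and (\ref{pphe2}) with identical constants, so $W$ may be replaced by $V(W)$; since $U$ is likewise an isometry, it sends regular domains to regular domains with the same exponents and constants, and can be absorbed at the end. Writing $\Lambda=\alpha_1 B$ with $B=\mathrm{diag}(1,\alpha_2/\alpha_1,\ldots,\alpha_D/\alpha_1)$, the isotropic scaling by $\alpha_1$ amounts to rescaling $d$ by a constant, which the hypothesis explicitly tolerates. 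The task thus reduces to showing that $B(W)\subset K$ is a regular domain of the claimed form, with $B$ diagonal of smallest singular value $1$ and largest $\alpha_D/\alpha_1$.

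\textbf{Transferring the small-boundary estimate.} Since $\sigma_{\min}(B)=1$, we have $\|B^{-1}\|=1$, so whenever $y\in B(W)$ satisfies $d(y,\mathbb{R}^D\setminus B(W))\le\rho$ the preimage satisfies $d(B^{-1}(y),\mathbb{R}^D\setminus W)\le\rho$. Combining this with $|\det B|=\prod_{i\neq 1}\alpha_i/\alpha_1$ and (\ref{pphe2}) for $W$ yields
\[
m\bigl(\{y\in B(W):\ d(y,\mathbb{R}^D\setminus B(W))\le\rho\}\bigr)\le \Crr{owox}\Bigl(\frac{\rho}{\Crr{de0w0x}\,r}\Bigr)^{\hat\eta} m(B(W)),
\]
where $r=\mathrm{diam}_d W$. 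Likewise, since $\sigma_{\min}(B)=1$ and $W$ contains a ball of radius $\Crr{de1w11a}\,r$, the set $B(W)$ contains a ball of the same radius centred at $B(z_W)$.

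\textbf{Cell counting and algebra.} Mimicking the proof of Proposition \ref{ant}, I would define a hierarchical family $\mathcal{F}^k(B(W))\subset\mathcal{P}^k$ and choose $k_0=k_0(B(W))$ with $\Crr{de000d}^{k_0}\sim r$. For $k>k_0$ the union of $\mathcal{F}^k$-cells lies in the $\Crr{de1111d}\Crr{de000d}^{k-1}$-tube about $\partial B(W)$, so the displayed estimate gives $\sum_{P\in\mathcal{F}^k(B(W))}m(P)\lesssim \Crr{de000d}^{\hat\eta(k-k_0)}m(B(W))$. Ahlfors regularity gives $m(P)\sim\Crr{de000d}^{Dk}$ for $P\in\mathcal{P}^k$, which bounds $\#\mathcal{F}^k(B(W))$. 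The same algebraic identity that produces the anisotropy factor in Proposition \ref{ant},
\[
\Crr{de000d}^{-Dspk_0}\,m(B(W))^{sp}\sim r^{-Dsp}\bigl(|\det B|\,r^D\bigr)^{sp}=\Bigl(\prod_{i\neq 1}\tfrac{\alpha_i}{\alpha_1}\Bigr)^{sp},
\]
then delivers
\[
\sum_{P\in\mathcal{F}^k(B(W))}m(P)^{1-sp}\lesssim \Bigl(\prod_{i\neq 1}\tfrac{\alpha_i}{\alpha_1}\Bigr)^{sp}\Crr{de000d}^{(\hat\eta-Dsp)(k-k_0)}m(B(W))^{1-sp}.
\]

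\textbf{Main obstacle.} The crucial point \emph{and} the reason a direct invocation of Proposition \ref{lip} is insufficient is to produce the sharp factor $\prod_{i\neq 1}\alpha_i/\alpha_1$ instead of the crude $(\alpha_D/\alpha_1)^{\#}$ coming from the bilipschitz constants of $B$; this forces the tube/cell-counting argument to be performed \emph{directly} for $B(W)$, exploiting $\sigma_{\min}(B)=1$ rather than $\|B\|=\alpha_D/\alpha_1$. The harmless $(1-\delta)$ loss in the exponent absorbs the slack in matching $\Crr{de000d}^{k_0(B(W))}$ to $r$ up to constants and in reconciling the anisotropic geometry of $B(W)$ with the isotropic grid on $K$, which is why $\Crr{essss}$ depends on $\delta$ but not on $A$ beyond the ratios $\alpha_i/\alpha_1$.
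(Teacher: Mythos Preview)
Your core idea---transfer the ball containment and the tube estimate from $W$ to $A(W)$ using only $\|A^{-1}\|=1/\alpha_1$ and $|\det A|=\prod_i\alpha_i$, then run the Whitney cell-counting of Proposition~\ref{ant} directly on $A(W)$---is correct and is \emph{not} the route the paper takes. The paper instead builds an auxiliary dyadic grid $\mathcal{D}_{\hat K}$ on a $D$-cube $\hat K$ aligned with the singular basis of $A$, uses Proposition~\ref{lip} (with $h=\mathrm{Id}$) to Whitney-decompose $W$ in $\mathcal{D}_{\hat K}$, maps each cube $P$ by $A$ to a rectangular box, and then invokes Proposition~\ref{ant} to decompose every $A(P)$ in $\mathcal{P}$. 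The $(1-\delta)$ weakening in the paper arises precisely from combining these two decompositions: at level $\ell$ one must sum over all intermediate levels $j=k_0(A(P),\mathcal{P})\le\ell$, and this geometric tail costs the factor $(1-\delta)$ in the exponent. Your direct argument does not have this double sum, and in fact yields the sharper rate $\Crr{de000d}^{\hat\eta-Dsp}$; your closing sentence explaining the $(1-\delta)$ as ``slack in matching $k_0$ to $r$'' is therefore not accurate---those mismatches are multiplicative constants absorbed into $\Crr{essss}$, not an exponent loss.

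Two presentational issues are worth cleaning up. First, the claim that the isometry $U$ ``sends regular domains to regular domains with the same constants'' is not immediate: regularity is defined relative to the fixed grid $\mathcal{P}$, which $U$ does not preserve. What you really use is that $U$ preserves the \emph{geometric} inputs (ball containment and tube estimate), so the cell-counting can simply be carried out for $A(W)$ itself rather than for $B(V(W))$ followed by a rotation; the SVD factorisation is then unnecessary. Second, you should check (it holds) that $k_0(A(W))$ as defined by the grid genuinely satisfies $\Crr{de000d}^{k_0(A(W))}\asymp\alpha_1 r$: the upper bound comes from the inscribed ball of radius $\alpha_1\Crr{de1w11a}r$, the lower bound from $A(W)\subset A(B(z_W,r))$, an ellipsoid whose shortest semi-axis is $\alpha_1 r$. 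Once these are in place your argument is complete and slightly sharper than the paper's.
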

\begin{proof}Let $\hat{B}=\{v_1,\dots, v_n\}$ be a orthonormal basis of $\mathbb{R}^D$ such that $AA^\star v_i = \alpha_i^2  v_i$.  Then $B=\{ A(v_1)/\alpha_1 ,\dots, A(v_D)/\alpha_D\}$ is also an orthonormal  basis. Consider a $D$-cube $\hat{K}$ with sides parallels to the basis $\hat{B}$ such that $Q \subset \hat{K}$. Then $(\hat{K},d_{\hat{K}},\mathcal{D}_{\hat{K}})$ is measure space with a good grid satisfying (\ref{ppd}), (\ref{pp2d}) and (\ref{pppd}) with $\eta=1$, $\Crr{de000d}=1/2$ and the other constants that appears there may be chosen such that  they do  not depend on the chosen orthonormal basis and $D$-cube $\hat{K}$. By Proposition \ref{lip} (take $h=Id$) we have that $W$ is a $(1-sp, \Crr{abs3}, (1/2)^{\hat{\eta}-Dsp})$-regular domain in $(\hat{K},d_{\hat{K}},\mathcal{D}_{\hat{K}})$,  with 
 $$\Crr{abs3}= \Crr{abs3ww}  \frac{ \Crr{owox}}{\Crr{de0w0x}^{\hat{\eta}} \Crr{de1w11a}^D}.$$ 
 and $\Crr{abs3ww}$ does not depend on the chosen orthonormal basis and $D$-cube $\hat{K}$, that is , we can find families  $\hat{\mathcal{F}}^j(W)\subset \mathcal{D}^j_{\hat{K}}$ such that 
$$ \sum_j \sum_{P\in\hat{\mathcal{F}}^j(W)} 1_P= 1_W$$ and
\begin{equation}\label{v45} \sum_{P\in \hat{\mathcal{F}}^j(W)} m_{\hat{K}}(P)^{1-sp} \leq \Crr{abs3} (1/2)^{(j-k_0(W,\mathcal{D}_{\hat{K}} ))(\hat{\eta}-Dsp)}m_{\hat{K}}(W)^{1-sp}.\end{equation}
Note that for every $P\in \cup_j \hat{\mathcal{F}}^j(W)$ we have that $A(P)$ is a set as in Proposition \ref{ant}, where $\beta_i = \alpha_i c$, for some $c> 0$. So $A(P)$ is $(1-sp,\Crr{esss}\Pi_{i\neq 1} \alpha_i/\alpha_1, \Crr{de000d}^{1-Dsp})$-regular domain on $(K,m_K,\mathcal{P})$, so there are families $\mathcal{F}^j(A(P))\subset \mathcal{P}^j$ such that 
$$\sum_j \sum_{R\in \mathcal{P}^j(A(P))} 1_W = 1_{A(R)}$$ 
and
$$\sum_{R\in \mathcal{P}^j(A(P))} m_K(R)^{1-sp} \leq \Crr{esss}(\Pi_{i\neq 1} \alpha_i/\alpha_1)^{sp}\Crr{de000d}^{(j-k_0(A(P),\mathcal{P}))(1-Dsp)}  m_K(A(P))^{1-sp}.$$
We have $m(A(P))= \alpha_1 \cdots \alpha_D m(P)$.  Let $\Cll{au2}^{-1}d$ and $\Cll{aau}^{-1}\hat{d}$ be  the euclidean metric. Note that (replacing the constants if necessary) we may assume that both $W$ and elements of $\mathcal{D}_{\hat{K}}$ satisfy (\ref{pphe}). 
There is $\Cll{err}$ such that for  every open set  $Q$ that satisfies (\ref{pphe}) we have 
$$\frac{1}{ \Crr{err}} (1/2)^{k_0(Q,\mathcal{D}_{\hat{K}})} \frac{\Crr{au2}}{\Crr{aau}}\alpha  \leq \Crr{de000d}^{k_0(A(Q),\mathcal{P})}\leq  \alpha \frac{\Crr{au2}}{\Crr{aau}} (1/2)^{k_0(Q,\mathcal{D}_{\hat{K}})}\Crr{err},$$
in particular (\ref{v45}) gives
\begin{equation}\label{v455} \sum_{\substack{P\in \hat{\mathcal{F}}(W)\\ k_0(A(P),\mathcal{P})=j    }} m(A(P))^{1-sp} \leq \Cll{abs3h} \Crr{de000d}^{(j-k_0(A(W),\mathcal{P}))(\hat{\eta}-Dsp)}m(A(W))^{1-sp},\end{equation}
so
\begin{align*}&\sum_{j \leq  \ell}\sum_{\substack{P\in \hat{\mathcal{F}}(W)\\ k_0(A(P),\mathcal{P})=j    }}  \sum_{R\in \mathcal{P}^\ell(A(P))} m(R)^{\hat{\eta}-sp}\\
&\leq \sum_{j \leq  \ell} \Crr{esss}(\Pi_{i\neq 1} \frac{\alpha_i}{\alpha_1})^{sp}\Crr{de000d}^{(\ell - j)(\hat{\eta} -Dsp)}  \sum_{\substack{P\in \hat{\mathcal{F}}(W)\\ k_0(A(P),\mathcal{P})=j    }}  m(A(P))^{1-sp} \\
&\leq \Crr{esss}  \Crr{abs3h}(\Pi_{i\neq 1} \frac{\alpha_i}{\alpha_1})^{sp} m(A(W))^{1-sp} \sum_{j \leq \ell} \Crr{de000d}^{(\ell -j)(\hat{\eta}-Dsp)} \Crr{de000d}^{(j -k_0(A(W),\mathcal{P}))(\hat{\eta}-Dsp)}\\
&\leq \Crr{esss}  \Crr{abs3h}(\Pi_{i\neq 1}  \frac{\alpha_i}{\alpha_1})^{sp}  m(A(W))^{1-sp}\Crr{de000d}^{(1-\delta)(\ell -k_0(A(W),\mathcal{P}))(\hat{\eta}-Dsp)} \sum_{j \leq \ell} \Crr{de000d}^{\delta (\ell -j)(\hat{\eta}-Dsp)}\\
&\leq \Crr{essss}  (\Pi_{i\neq 1}  \frac{\alpha_i}{\alpha_1})^{sp} \Crr{de000d}^{(1-\delta)(\ell -k_0(A(W),\mathcal{P}))(\hat{\eta}-Dsp)}m(A(W))^{1-sp}.\end{align*}
and the same inequality holds replacing  $m$ by $m_K$.
\end{proof} 

\begin{proposition}\label{def} Let $(K,m_K,\mathcal{P})$ be  a measure space with a good grid as in Proposition \ref{ant}.  For every  small $\delta$ there is $\Cll{esssss} >0$ such that the following holds.  Let $O$ be an open set and $h\colon O\rightarrow \mathbb{R}^D$ be a $C^1$-diffeomorphism and $O_1\subset O$ be an open subset such that $\overline{O}_1$ is compact. There is $\delta_1 > 0$ such that if  
\begin{itemize}
\item[A.] $W$ is  an  open set satisfying (\ref{pphe}) and (\ref{pphe2}), with $Dsp < \min\{ \hat{\eta},\eta\}$,
\item[B.] $W\subset O_1$ and $diam \ W < \delta_1$, 
\item[C.] $h(W)\subset K$,
\end{itemize}
then $h(W)$ is a $$(1-sp, \Crr{esssss}  (\Pi_{i\neq 1}  \frac{\alpha_i}{\alpha_1})^{sp}, \Crr{de000d}^{(1-\delta)(\hat{\eta}-Dsp)})$$ regular domain in $(K,m,\mathcal{P})$. Here $0< \alpha_1\leq \alpha_2 \leq \cdots \leq \alpha_n$ are such that $\{\alpha_i^2\}$ are the  eigenvalues of $AA^\star$, with $A=D_{x_0}h$ and $x_0$ is an arbitrary element of $W$. The constant $\Crr{esssss}$ depends only on  $\delta$ and the constants in (\ref{ppd}), (\ref{pp2d}) and (\ref{pppd}) for the good grid $\mathcal{P}$ considering $d$ as either the euclidean distance or a multiply of it. 
\end{proposition}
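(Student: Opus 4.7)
The plan is to linearize $h$ and reduce to the affine case via Proposition~\ref{qwe}. Fix an arbitrary $x_0 \in W$, set $A = D_{x_0}h$, and write $L(x) = h(x_0) + A(x-x_0)$ so that we obtain a factorization $h = L \circ \tilde h$ with $\tilde h := L^{-1}\circ h$ a near-identity map on $W$. The affine part $L$ carries the singular-value data $\alpha_1 \leq \dots \leq \alpha_D$, while $\tilde h$ is a small perturbation whose effect on the constants can be absorbed. This splits the problem into a piece for which Proposition~\ref{qwe} is tailor-made and a piece for which Proposition~\ref{lip} applies.

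Next I would control $\tilde h$. Because $\overline{O}_1$ is compact and $h$ is a $C^1$-diffeomorphism, $Dh$ is uniformly continuous on $\overline{O}_1$, and both $\|Dh\|$ and $\|(Dh)^{-1}\|$ are uniformly bounded there. By the mean value identity
\begin{equation*}
\tilde h(x) - \tilde h(y) = (x-y) + \int_0^1 A^{-1}\bigl(D_{y+t(x-y)}h - A\bigr)(x-y)\,dt,
\end{equation*}
for any $\epsilon > 0$ one can choose $\delta_1 > 0$ small enough (depending only on $h$ and $O_1$) so that whenever $\text{diam}\, W < \delta_1$ the map $\tilde h$ is bilipschitz on $W$ with constants in $[1-\epsilon, 1+\epsilon]$ and Jacobian ratio in $[1-C\epsilon, 1+C\epsilon]$ for a universal $C$. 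Taking $\epsilon$ sufficiently small (depending only on the target constants), the ratios $\Crr{lm11}/\Crr{lm1}$ and $\Crr{lm}/\Crr{lms}$ appearing in Proposition~\ref{lip} for $\tilde h$ are uniformly close to $1$.

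Now enclose both $W$ and $\tilde W := \tilde h(W)$ in a common auxiliary $D$-cube $\hat K$ equipped with its standard dyadic good grid $\mathcal{D}_{\hat K}$, whose constants are universal (independent of the orientation and size of $\hat K$). Applying Proposition~\ref{lip} to $\tilde h : W \to \tilde W$ inside $(\hat K, d_{\hat K}, \mathcal{D}_{\hat K})$, the hypotheses (\ref{pphe}) and (\ref{pphe2}) on $W$ transfer to $\tilde W$ with the same exponent $\hat\eta$ and constants modified only by the bounded factors coming from $\tilde h$ being near-identity. Proposition~\ref{qwe} then applies to the affine map $L$ (its translation is harmless, being absorbed into the ambient isometry of $\mathbb{R}^D$ used in Proposition~\ref{ant}), yielding that $L(\tilde W) = h(W)$ is a
\begin{equation*}
\bigl(1-sp,\; \Crr{essss}\,\bigl(\Pi_{i\neq 1}\alpha_i/\alpha_1\bigr)^{sp},\; \Crr{de000d}^{(1-\delta)(\hat\eta - Dsp)}\bigr)
\end{equation*}
regular domain in $(K, m_K, \mathcal{P})$. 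Collecting the bounded multiplicative perturbations from the near-identity step into a single constant $\Crr{esssss}$ produces the claimed bound.

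The main obstacle is ensuring that the bilipschitz defects of $\tilde h$ can be made uniformly small over all admissible $W \subset O_1$ by a single choice of $\delta_1$, without introducing an extra dependence on $W$ or $x_0$ into $\Crr{esssss}$; this is exactly where compactness of $\overline{O}_1$ and the uniform continuity of $Dh$ are used, together with the uniform lower bound on the smallest singular value of $D_{x_0}h$ for $x_0 \in \overline{O}_1$. A minor but important point is that the exponent $\hat\eta - Dsp$ is preserved through Proposition~\ref{lip} and only gets replaced by $(1-\delta)(\hat\eta - Dsp)$ in the final application of Proposition~\ref{qwe}, matching the target conclusion.
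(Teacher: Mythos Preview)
Your proposal is correct and follows essentially the same route as the paper: factor $h$ as the composition of the linear map $A=D_{x_0}h$ with the near-identity map $A^{-1}\circ h$ (the paper's $g$, your $\tilde h$ up to a harmless translation), use compactness of $\overline{O}_1$ and uniform continuity of $Dh$ to make $\tilde h$ bilipschitz with constants in $[1/2,2]$ for $\mathrm{diam}\,W<\delta_1$, apply Proposition~\ref{lip} to transfer (\ref{pphe})--(\ref{pphe2}) to $\tilde h(W)$, and then apply Proposition~\ref{qwe} to $A$ and $\tilde h(W)$. The only cosmetic difference is that you introduce the auxiliary cube $\hat K$ explicitly, whereas the paper leaves that step inside the proof of Proposition~\ref{qwe}.
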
 
\begin{proof} Since $\overline{O}_1$ is compact we can write 

$$h(y)-h(x)=D_xh(y-x) + R(x,y)$$
where 
$$\lim_{\substack{|x-y|\rightarrow 0\\ x,y \in I}} \frac{|R(x,y)|}{|x-y|}=0.$$
Since $x\mapsto D_xh$ is continuous in $\overline{O}_1$ and $D_xh$ is invertible for every $x\in \overline{O}_1$, it is easy to see that there there is $\delta_1> 0$ such that 
$$\frac{1}{2}|x-y|\leq | (D_{x_0}h)^{-1} h(y)- (D_{x_0}h)^{-1} h(x)|\leq 2|x-y|$$
for every $x,y  \in B(x_0,\delta_1)$, $x_0\in O_1$.  Apply Proposition  \ref{lip} for $g(x)=(D_{x_0}h)^{-1} \circ h(x)$ and $W$, then apply Proposition \ref{qwe} for $A=D_{x_0}h$ and $g(W)$.
\end{proof}

\section{Regular Potentials}

\subsection{H\"older jacobian} 

\begin{proposition} \label{holderr}  There is $\Cll{uuu}$, that  {\it depends only } on the good grid $\mathcal{P}$, with the following property. 
\begin{itemize}
\item[A.] Suppose that  $\Omega\subset I$ is an interval and the  function $$g\colon \Omega\rightarrow \mathbb{R}$$ satisfies 
$$|g(x)-g(y)|\leq \Cll{hold} d(x,y)^{D(\beta+\epsilon)}$$
for every $x,y \in \Omega$.
Then there if  $W \in \mathcal{P}$ is  such that $W \subset \Omega$  and 
$$\Crr{hold} |W|^{\beta + \epsilon} \leq \sup_W g,$$
then 
$$|g1_W|_{\mathcal{B}^\beta_{p,q}(W,\mathcal{P}_W,\mathcal{A}^{sz}_{p,q})}  \leq  2\Crr{uuu}(\sup_W g)     |W|^{1/p-\beta}.$$
\item[B.] If additionally the function  $g$ is the jacobian of $h$, that is, there
$$m(h(A))=\int_A g \ dm.$$
for every measurable set $A$. If  $W, Q\in \mathcal{P}$ be  such that $W \subset J$, $Q \subset I$ and $h(W)\subset Q$ and 
$$ \Crr{hold} (diam \ h^{-1}(Q) )^{D(\beta+\epsilon)} + \Crr{hold} |W|^{\beta + \epsilon}\leq  \frac{|Q|}{|h^{-1}(Q)|}.$$ then 
$$|g1_W|_{\mathcal{B}^\beta_{p,q}(W,\mathcal{P}_W,\mathcal{A}^{sz}_{p,q})}  
\leq 2\Crr{uuu} \frac{|Q|}{|h^{-1}(Q)|}  |W|^{1/p-\beta}.$$
\end{itemize}
\end{proposition}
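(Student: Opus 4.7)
The plan is to bound $|g\mathbf{1}_W|_{\mathcal{B}^\beta_{p,q}}$ by producing an explicit atomic decomposition in the framework of \cite{smania-besov}, splitting $g\mathbf{1}_W = M\mathbf{1}_W + (g-M)\mathbf{1}_W$ with $M := \sup_W g$ and treating the two pieces separately. For the first piece, since $W \in \mathcal{P}$ is itself a grid element, $\mathbf{1}_W$ admits the trivial single-atom decomposition at scale $k_0(W)$, which yields $|M\mathbf{1}_W|_{\mathcal{B}^\beta_{p,q}} \leq C\, M\, |W|^{1/p-\beta}$ with $C$ depending only on the grid.

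For the oscillation piece I would telescope along refinements of $W$: for every $P \in \mathcal{P}^j$ with $P \subset W$ and $j \geq k_0(W)$, evaluate $g$ at the center $z_P$ provided by \eqref{ppd}, giving the pointwise convergent representation
\begin{equation*}
(g-M)\mathbf{1}_W \;=\; (g(z_W) - M)\mathbf{1}_W \;+\; \sum_{j > k_0(W)}\; \sum_{\substack{P \in \mathcal{P}^j \\ P \subset W}} \bigl( g(z_P) - g(z_{P^\ast}) \bigr) \mathbf{1}_P,
\end{equation*}
where $P^\ast \in \mathcal{P}^{j-1}$ is the parent of $P$. The Hölder hypothesis combined with \eqref{pp2d} and \eqref{geo} bounds each jump by $|g(z_P) - g(z_{P^\ast})| \leq \Crr{hold} (diam_d P^\ast)^{D(\beta+\epsilon)} \lesssim \Crr{hold}\, m(P)^{\beta+\epsilon}$. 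Feeding this into the $\mathcal{A}^{sz}_{p,q}$-coefficient sequence, the scale-$j$ $\ell^p$-contribution is of order $\Crr{hold}\, m(W)^{1/p}\, \Crr{de000d}^{(j-k_0(W))D(\beta+\epsilon)}$, and the $\ell^q$-sum in $j$ is a convergent geometric series, producing a bound $C'\,\Crr{hold}\, |W|^{\beta+\epsilon}\, |W|^{1/p-\beta}$. The hypothesis $\Crr{hold}\, |W|^{\beta+\epsilon} \leq M$ absorbs this into $C'\, M\, |W|^{1/p-\beta}$, and setting $\Crr{uuu} := \max(C, C')$ finishes Part A.

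Part B would then be reduced to Part A via the Jacobian identity $|Q|/|h^{-1}(Q)| = \mathrm{avg}_{h^{-1}(Q)}\, g$. Hölder continuity gives
\begin{equation*}
\sup_W g \;\leq\; \sup_{h^{-1}(Q)} g \;\leq\; \frac{|Q|}{|h^{-1}(Q)|} + \Crr{hold}(diam\, h^{-1}(Q))^{D(\beta+\epsilon)} \;\leq\; 2\,\frac{|Q|}{|h^{-1}(Q)|}
\end{equation*}
by the first summand of the assumed inequality, and symmetrically $\sup_W g \geq |Q|/|h^{-1}(Q)| - \Crr{hold}(diam\, h^{-1}(Q))^{D(\beta+\epsilon)} \geq \Crr{hold}\,|W|^{\beta+\epsilon}$ by using the full assumed inequality. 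So the hypothesis of Part A is met for $g$ on $W$, and Part A's bound, with $\sup_W g$ replaced by $2|Q|/|h^{-1}(Q)|$, gives the conclusion of Part B.

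The main obstacle will be the atomic-decomposition step for $(g-M)\mathbf{1}_W$: matching the concrete refinement sum to the normalization conventions of the atomic Besov space $\mathcal{B}^\beta_{p,q}(W,\mathcal{P}_W,\mathcal{A}^{sz}_{p,q})$ from \cite{smania-besov}, and then checking that the grid-dependent constants $\Crr{de111d},\Crr{de1111d},\Crr{de000d},\Crr{iop},\Crr{ood},\eta$ combine into a single $\Crr{uuu}$ depending only on the grid. Verifying that the $\epsilon$-dependence of the geometric series at scale $j$ is properly absorbed by the $|W|^{\epsilon}$ gained from the Hölder estimate, so that $\Crr{uuu}$ is free of $\epsilon$, is the key technical subtlety; the telescoping idea itself is robust.
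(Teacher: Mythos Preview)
Your proposal is correct, and Part B is handled essentially the same way as in the paper (reduce to the key estimate via the averaging identity $|Q|/|h^{-1}(Q)| = |h^{-1}(Q)|^{-1}\int_{h^{-1}(Q)} g\,dm$ combined with the H\"older bound on the oscillation of $g$ over $h^{-1}(Q)$).

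For Part A, however, the paper takes a much shorter route. Rather than splitting $g\mathbf{1}_W$ into a constant piece plus an oscillation piece and then telescoping the oscillation over refinements into Souza atoms, the paper observes that $g\mathbf{1}_W$ is, after a single normalization, \emph{itself} a H\"older atom. Concretely, it sets
\[
\phi \;=\; \frac{|W|^{\beta-1/p}}{\Crr{hold}|W|^{\beta+\epsilon} + \sup_W g}\, g\,\mathbf{1}_W
\]
and checks in two lines that $|\phi|_\infty \le |W|^{\beta-1/p}$ and $|\phi(x)-\phi(y)| \le |W|^{-1/p-\epsilon}\,d(x,y)^{D(\beta+\epsilon)}$, so $\phi$ is a $(\beta,\beta+\epsilon,p)$-H\"older atom supported on $W$. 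The constant $\Crr{uuu}$ is then simply the Besov norm bound for a single H\"older atom, which the companion paper supplies as depending only on the grid. This yields the master inequality $|g\mathbf{1}_W|_{\mathcal{B}^\beta_{p,q}} \le \Crr{uuu}(\Crr{hold}|W|^{\beta+\epsilon} + \sup_W g)|W|^{1/p-\beta}$, from which both A and B follow by the respective hypotheses.

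What each approach buys: the paper's argument is cleaner and sidesteps entirely the subtlety you flag about the $\epsilon$-dependence of the geometric series --- it never sums a geometric series, because the H\"older regularity is absorbed in one shot by the atom class $\mathcal{A}^{h,\beta+\epsilon}$ rather than being resolved scale-by-scale into Souza atoms. Your approach, on the other hand, is more self-contained (it does not invoke the H\"older-atom machinery from the companion paper) and makes the passage from H\"older regularity to Besov membership completely explicit; it would also adapt more readily if $g$ had a weaker modulus of continuity than H\"older.
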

\begin{proof}  Let $W, Q\in \mathcal{P}$ be  such that $W \subset J$, $Q \subset I$ and $h(W)\subset Q$. 
The function $\phi\colon I \rightarrow \mathbb{R}$ given by $\phi(x)=0$ if $x \not\in W$, and 
$$\phi(x)= \frac{g(x)}{\Crr{hold} |W|^{\beta + \epsilon} +\sup_W g} |W|^{\beta-1/p} 1_W$$
otherwise, is a $(\beta,\beta+\epsilon,p)$-H\"older  atom supported on $W$. Indeed
$$|\phi|_\infty\leq |W|^{\beta-1/p}$$
and for every $x,y\in W$ 
\begin{align*}
|\phi(x)-\phi(y)|&\leq  \frac{\Crr{hold} }{\Crr{hold} |W|^{\beta + \epsilon} +\sup_W g}   |W|^{\beta-1/p}d(x,y)^{D(\beta+\epsilon)}\\
&\leq  \frac{\Crr{hold} |W|^{\beta + \epsilon}   }{ \Crr{hold} |W|^{\beta + \epsilon} +\sup_W g}   |W|^{\beta-1/p-(\beta+\epsilon)}  d(x,y)^{D(\beta+\epsilon)}\\
&\leq    |W|^{\beta-1/p-(\beta+\epsilon)}  d(x,y)^{D(\beta+\epsilon)}
\end{align*}
This implies  that
$$g1_W=( \Crr{hold} |W|^{\beta + \epsilon} +\sup_W g)|W|^{1/p-\beta}\phi,$$
so
\begin{equation}\label{kkk}
|g1_W|_{\mathcal{B}^\beta_{p,q}(W,\mathcal{P}_W,\mathcal{A}^{sz}_{p,q})}  \leq \Crr{uuu} (\Crr{hold} |W|^{\beta + \epsilon} +\sup_W g) |W|^{1/p-\beta}\end{equation}
where $\Crr{uuu}$ {\it depends only } on the good grid $\mathcal{P}$.\\

\noindent {\bf Proof of A.} We  have 
\begin{align*}
|g1_W|_{\mathcal{B}^\beta_{p,q}(W,\mathcal{P}_W,\mathcal{A}^{sz}_{p,q})}  &\leq \Crr{uuu} (\Crr{hold} |W|^{\beta + \epsilon} +\sup_W g) |W|^{1/p-\beta}\\
&\leq 2\Crr{uuu}(\sup_W g)   |W|^{1/p-\beta}.
\end{align*}

\noindent {\bf Proof of B.} Note that for every $x\in h^{-1}(Q)$
\begin{align*}
g(x)&=\frac{1}{|h^{-1}(Q)|}\int_{h^{-1}(Q)} g(x) \ dm(y)\\
&= \frac{1}{|h^{-1}(Q)|}\int_{h^{-1}(Q)} g(y) \ dm(y)+   \frac{1}{|h^{-1}(Q)|}\int_{h^{-1}(Q)} g(x)-g(y) \ dm(y)\\
&\leq   \frac{|Q|}{|h^{-1}(Q)|}+ \Crr{hold} (diam \ h^{-1}(Q) )^{D(\beta+\epsilon)} .
\end{align*}
so in particular 
$$\sup_W g \leq   \frac{|Q|}{|h^{-1}(Q)|}+  \Crr{hold} (diam \ h^{-1}(Q) )^{D(\beta+\epsilon)} .$$
So if 
$$ \Crr{hold} (diam \ h^{-1}(Q) )^{D(\beta+\epsilon)} + \Crr{hold} |W|^{\beta + \epsilon}\leq  \frac{|Q|}{|h^{-1}(Q)|}.$$
Then
\begin{align*}
|g1_W|_{\mathcal{B}^\beta_{p,q}(W,\mathcal{P}_W,\mathcal{A}^{sz}_{p,q})}  &\leq \Crr{uuu} (\Crr{hold} |W|^{\beta + \epsilon} +\sup_W g) |W|^{1/p-\beta}\\
&\leq   \Crr{uuu}\big(  \frac{|Q|}{|h^{-1}(Q)|}+   \Crr{hold} (diam \ h^{-1}(Q) )^{D(\beta+\epsilon)}  + \Crr{hold} |W|^{\beta + \epsilon}\big)  |W|^{1/p-\beta}\\
&\leq 2\Crr{uuu} \frac{|Q|}{|h^{-1}(Q)|}  |W|^{1/p-\beta}.
\end{align*}
\end{proof}

\subsection{Non-flat critical points on an interval} Here we consider the interval $[0,1]$ with the dyadic grid and the Lebesgue measure.   Define  $h\colon [0,1]\rightarrow [0,1]$ as $h(x)=x^\alpha$, with $\alpha >   1$ and
$$g(x)=h'(x)=\alpha x^{\alpha-1}.$$

To simplify the notation we denote $\gamma=\alpha-1 \in (0,\infty)$. 

\begin{lemma}\label{le1} If  $\gamma \in (0,1)$ then $g$ is $\gamma$-H\"older continuous, that is, there is $K_\gamma$ such that
$$|g(x)-g(y)|\leq K_\gamma |x-y|^\gamma.$$
\end{lemma}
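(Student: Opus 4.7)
The plan is to reduce the claim to the elementary inequality
\[
|x^\gamma - y^\gamma| \leq |x-y|^\gamma \quad \text{for all } x, y \in [0,1],
\]
valid when $\gamma \in (0,1)$, which then gives the Hölder estimate for $g(x)=\alpha x^\gamma$ with constant $K_\gamma = \alpha$.

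First I would reduce to the pure power function. Since $g(x) - g(y) = \alpha(x^\gamma - y^\gamma)$, it suffices to show that $t \mapsto t^\gamma$ is $\gamma$-Hölder on $[0,1]$ (equivalently, on $[0,\infty)$).

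Next I would establish the subadditivity $(a+b)^\gamma \leq a^\gamma + b^\gamma$ for all $a,b \geq 0$, which is the crux. Assuming $a+b > 0$, set $s = a/(a+b) \in [0,1]$ and $1-s = b/(a+b)$. Because $\gamma \in (0,1)$, we have $s^\gamma \geq s$ and $(1-s)^\gamma \geq 1-s$, hence $s^\gamma + (1-s)^\gamma \geq 1$; multiplying by $(a+b)^\gamma$ yields $a^\gamma + b^\gamma \geq (a+b)^\gamma$.

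Finally, assume without loss of generality $x \geq y \geq 0$, apply subadditivity with $a = y$ and $b = x - y$ to get
\[
x^\gamma = (y + (x-y))^\gamma \leq y^\gamma + (x-y)^\gamma,
\]
so $|x^\gamma - y^\gamma| \leq (x-y)^\gamma = |x-y|^\gamma$. Multiplying by $\alpha$ gives $|g(x) - g(y)| \leq \alpha |x-y|^\gamma$, so we may take $K_\gamma = \alpha$. There is no real obstacle here — the only subtlety is that one cannot use the mean value theorem naively since $g'(x) = \alpha\gamma x^{\gamma-1}$ blows up at $0$, which is exactly why we appeal to subadditivity of $t^\gamma$ instead.
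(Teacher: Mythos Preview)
Your proof is correct. Both you and the paper reduce to showing $|x^\gamma - y^\gamma| \leq C|x-y|^\gamma$ for the pure power, but the arguments diverge from there. The paper considers the ratio
\[
T(y,x) = \frac{x^\gamma - y^\gamma}{(x-y)^\gamma},
\]
observes its scale invariance $T(\lambda y, \lambda x) = T(y,x)$ to reduce to the one-parameter family $T(1,x)$ on $(1,\infty)$, and then argues that $T(1,x)$ is bounded because it is continuous with finite limits at both endpoints; this yields only that the supremum is finite without computing it. Your route through the subadditivity $(a+b)^\gamma \leq a^\gamma + b^\gamma$, proved via $s^\gamma \geq s$ on $[0,1]$, is more direct and gives the explicit sharp constant $K_\gamma = \alpha$ (equivalently, $\sup T = 1$). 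In effect your argument proves what the paper's scaling argument merely asserts is finite; the paper's approach, on the other hand, would generalize more readily to functions that are homogeneous of a given degree without being pure powers.
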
 
\begin{proof} Consider  $0\leq  y < x$ and 
$$T(y,x)= \frac{x^\gamma- y^\gamma}{(x-y)^\gamma}.$$
Of course $T(0,x)=1$ for every $x> 0$. Note that $T(\lambda x, \lambda y)=T(x,y)$ for every $\lambda > 0$, so  $\sup_{0<  y < x} T(x,y)= \sup_{1< x} T(1,x)$. It is easy to see that
$$\lim_{x\rightarrow 1+} T(1,x)= 0 \ and \  \lim_{x\rightarrow+\infty} T(1,x)=1,$$
so    we can take $K_\gamma= (\gamma+1)\sup_{0\leq  y < x} T(x,y) < \infty.$
\end{proof} 

\begin{lemma} \label{le2}Suppose $\gamma > 0$ and  $b \in  (c,d]\subset [0,\infty).$ Then
$$\sup_{c < b\leq d}  \frac{b^{\gamma}  (d-c)}{d^{1+\gamma}-c^{1+\gamma}} \leq 1.$$

\end{lemma}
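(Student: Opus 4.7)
The plan is to exploit the fact that $b$ enters the quotient only through the factor $b^{\gamma}$ in the numerator. Since $\gamma > 0$, the map $b \mapsto b^{\gamma}$ is strictly increasing on $[0,\infty)$. The denominator $d^{1+\gamma} - c^{1+\gamma}$ is a positive constant (positive because $c < d$ and $\gamma > 0$), and so the supremum over $b \in (c,d]$ is attained at $b = d$.

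Thus the whole statement reduces to the single inequality
$$d^{\gamma}(d-c) \leq d^{1+\gamma} - c^{1+\gamma}.$$
Expanding the left-hand side as $d^{1+\gamma} - c\, d^{\gamma}$, this is equivalent to
$$c^{1+\gamma} \leq c\, d^{\gamma}.$$
If $c = 0$ this is trivial. If $c > 0$, divide by $c$ to obtain $c^{\gamma} \leq d^{\gamma}$, which holds because $c \leq d$ and $\gamma > 0$.

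There is no real obstacle here. The only place one must be careful is in the very first reduction: one should check that the denominator is strictly positive so that monotonicity of the numerator indeed controls the quotient, and this follows immediately from $c < d$. The lemma is purely elementary and no further machinery from the paper is needed.
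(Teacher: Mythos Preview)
Your proof is correct and essentially follows the same idea as the paper's: both arguments reduce to the case $b=d$ and then verify $d^{\gamma}(d-c)\leq d^{1+\gamma}-c^{1+\gamma}$. The paper inserts an extra homogeneity step, rescaling to $c=1$ before checking this inequality, whereas you handle general $c$ directly; your route is slightly more streamlined but otherwise the arguments coincide.
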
 
\begin{proof} For every $b,c,d$ satisfying $0\leq c < b\leq d$  define
$$T(c,b,d)=  \frac{b^{\gamma}  (d-c)}{d^{1+\gamma}-c^{1+\gamma}}$$
If $c=0$ then    
$$T(0,b,d)=  \frac{b^\gamma}{d^\gamma}\leq 1.$$
Note that for every $\lambda > 0$ and $c> 0$ we have
$$T(\lambda c, \lambda b, \lambda d)= T(c,b,d),$$
so we have
$$\sup_{0< c < b\leq d}  \frac{b^{\gamma}  (d-c)}{d^{1+\gamma}-c^{1+\gamma}}=  \sup_{1 < b\leq d}  \frac{b^{\gamma}  (d-1)}{d^{1+\gamma}-1} \leq  \sup_{1< b\leq d}  \frac{d^{1+\gamma} -d^\gamma}{d^{1+\gamma}-1} \leq 1.$$
\end{proof} 

\begin{proposition} \label{lorenzz} Suppose that $\beta < \min\{1, \gamma\}$. Then there is $\Crr{novac}$ such that  the following holds. Let $W$ and $Q$ be intervals such that  $W\subset h^{-1}(Q)$. Then 
$$|g1_W|_{\mathcal{B}^\beta_{p,q}(W,\mathcal{P}_W,\mathcal{A}^{sz}_{p,q})}  \leq    \Crr{novac} |W|^{1/p-\beta}  \frac{|Q|}{|h^{-1}Q|}.$$

\end{proposition}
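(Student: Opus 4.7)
The plan is to combine the sup-estimate from Lemma \ref{le2} with a dyadic decomposition of $W$ with respect to distance to the origin, applying Proposition \ref{holderr}(A) on each piece and assembling the bound via sub-additivity of the atomic Besov quasi-norm.

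Write $W = [b_1, b_2] \subset h^{-1}(Q) = [a,b]$. Substituting $c \mapsto a$, $b \mapsto b_2$, $d \mapsto b$ into Lemma \ref{le2} gives
$$\sup_W g \;=\; \alpha b_2^\gamma \;\leq\; \alpha\cdot\frac{b^{1+\gamma}-a^{1+\gamma}}{b-a} \;=\; \alpha\cdot\frac{|Q|}{|h^{-1}(Q)|}.$$
Set $A_k := b_2 2^{-k}$ and partition $W = \bigsqcup_{k \geq 0} W_k$ with each $W_k \subset [A_{k+1}, A_k]$ and $|W_k| \sim A_k$; this is done by taking $W_k = W \cap [A_{k+1}, A_k]$ and merging any final under-sized piece with its predecessor. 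Choose $\mu = \beta + \epsilon$ with $\beta < \mu < \min\{1, \gamma\}$, possible by assumption. Then Lemma \ref{le1} (when $\gamma < 1$) or the Mean Value Theorem applied to $g'(x) = \alpha\gamma x^{\gamma-1}$ on $[0, A_k]$ (when $\gamma \geq 1$) shows that $g|_{W_k}$ is $\mu$-H\"older with constant at most $C(\alpha,\gamma) A_k^{\gamma - \mu}$, and $\sup_{W_k} g \leq \alpha A_k^\gamma$.

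Subdividing each $W_k$ into a bounded number $N = N(\alpha, \gamma, \mu)$ of sub-intervals if necessary, the hypothesis $\Crr{hold}|U|^\mu \leq \sup_U g$ of Proposition \ref{holderr}(A) holds on every sub-piece $U$, and the proposition yields $|g\,1_U|_{\mathcal{B}^\beta_{p,q}} \lesssim A_k^\gamma |U|^{1/p-\beta}$. Summing over the sub-pieces inside each $W_k$ and then over $k$, using sub-additivity (in $\ell^q$) of the atomic Besov quasi-norm,
$$|g\,1_W|_{\mathcal{B}^\beta_{p,q}} \;\lesssim\; \sum_{k\geq 0} A_k^{\gamma + 1/p - \beta} \;=\; b_2^{\gamma + 1/p - \beta} \sum_{k \geq 0} 2^{-k(\gamma + 1/p - \beta)} \;\lesssim\; b_2^{\gamma + 1/p - \beta},$$
with geometric convergence because $\gamma + 1/p - \beta > 0$, which follows from $\beta < \gamma$. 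A case split on whether $b_1 > b_2/2$ (only $k = 0$ contributes, and $|W_0| = |W|$) or $b_1 \leq b_2/2$ (so $|W| \sim b_2$) gives $b_2^{\gamma + 1/p - \beta} \lesssim b_2^\gamma |W|^{1/p - \beta}$, and combined with the sup-estimate from Lemma \ref{le2} this produces the claimed bound.

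The main obstacle is balancing scales across the dyadic decomposition: the H\"older constant of $g$ on $W_k$ grows like $A_k^{\gamma-\mu}$ toward the origin while $\sup_{W_k} g$ decays like $A_k^\gamma$, and the two scales match the hypothesis of Proposition \ref{holderr}(A) precisely when $\mu < \gamma$, which is the same condition needed for the geometric sum to converge. The assumption $\beta < \min\{1, \gamma\}$ in the statement thus serves a dual purpose: $\beta < 1$ is what lets Lemma \ref{le1} / the MVT yield a usable H\"older exponent $\mu$, while $\beta < \gamma$ drives both the local hypothesis verification and the summation at the end.
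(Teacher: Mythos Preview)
Your argument is sound in its arithmetic and its final assembly via Lemma~\ref{le2}, but it is considerably more elaborate than the paper's proof, and it carries a small technical gap. The paper does \emph{not} decompose $W$ at all: writing $W=[a,b]$, it observes that the single function
\[
\phi \;=\; \frac{|W|^{\beta-1/p}}{C\, b^{\gamma}}\, g\cdot 1_W
\]
is already a $(\beta,\beta+\delta,p)$-H\"older atom supported on $W$ (the top element of $\mathcal{P}_W$), for an appropriate constant $C$ and $\delta>0$ with $\beta+\delta<\min\{1,\gamma\}$. The verification rests on the elementary inequality $|W|=b-a\le b$: in the H\"older estimate one trades $|x-y|^{\delta}\le b^{\delta}$ and then uses $b^{-(\beta+\delta)}\le |W|^{-(\beta+\delta)}$. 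This one-atom trick gives $|g1_W|_{\mathcal{B}^\beta_{p,q}}\lesssim b^{\gamma}|W|^{1/p-\beta}$ directly, after which Lemma~\ref{le2} finishes exactly as in your last step.

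The gap in your version is that Proposition~\ref{holderr}(A) requires its support interval to lie in the grid, while your pieces $W_k=W\cap[A_{k+1},A_k]$ and their sub-intervals $U$ are cut according to dyadic scales $A_k=b_2 2^{-k}$ measured from the origin and generically do \emph{not} belong to the grid $\mathcal{P}_W$ on which the norm $|g1_W|_{\mathcal{B}^\beta_{p,q}(W,\mathcal{P}_W,\ldots)}$ is taken; hence the atoms you produce on each $U$ are not automatically atoms for $\mathcal{P}_W$, and the sub-additivity step is not justified as written. This is repairable --- one can align the decomposition with $\mathcal{P}_W$, or cover each $W_k$ by $O(1)$ grid elements of comparable size --- but once you do so you are essentially rediscovering the paper's direct construction. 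Your decomposition-and-sum strategy is the right instinct when a potential genuinely cannot be captured by a single atom; here the inequality $|W|\le b$ makes that machinery unnecessary.
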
 
\begin{proof} We consider two cases:

\noindent {\bf Case A.} Suppose $\gamma < 1$. Choose $\delta > 0$  such that $\beta + 2\delta=\gamma=\alpha -1$.  Then by Lemma \ref{le1} 
$$|g(x)-g(y)|\leq K_\gamma |x-y|^{\beta + 2 \delta}.$$
Let  $W=[a,b]$ and $h^{-1}Q=[c,d]$. Define
$$\phi(x)=  \frac{|W|^{\beta - 1/p}}{(K_\gamma +\gamma+1) b^{\gamma}}  g\cdot 1_W.$$
We claim that $\phi$ is a $(\beta,\beta+\delta,p)$-H\"older  atom supported on $W$.  Indeed
$$|\phi|_\infty \leq \frac{|W|^{\beta - 1/p} (\gamma+1)b^{\gamma}}{(K_\gamma +\gamma +1) b^{\beta + 2\delta}}\leq |W|^{\beta - 1/p}.$$
and for every $x,y \in W$ we have
\begin{align*}
|\phi(x)-\phi(y)|&\leq \frac{K_\gamma |W|^{\beta - 1/p }}{(K_\gamma +\gamma+1) b^{\beta + 2\delta}} |x-y|^{\beta + 2 \delta} \\
&\leq \frac{ |W|^{\beta - 1/p } b^\delta}{ b^{\beta + 2\delta}} |x-y|^{\beta +  \delta} \\
&\leq \frac{ |W|^{\beta - 1/p }}{ b^{\beta + \delta}} |x-y|^{\beta +  \delta} \\
&\leq |W|^{\beta - 1/p-(\beta+\delta)}  |x-y|^{\beta +  \delta}.
\end{align*} 
Consequently by Lemma \ref{le2}
\begin{align*}
|g1_W|_{\mathcal{B}^\beta_{p,q}(W,\mathcal{P}_W,\mathcal{A}^{sz}_{p,q})} & \leq   \Cll{novac} |W|^{1/p-\beta} b^{\gamma}\\
&\leq \Crr{novac} |W|^{1/p-\beta} \frac{d^{1+\gamma}-c^{1+\gamma}}{d-c}\\
&\leq \Crr{novac} |W|^{1/p-\beta}  \frac{|Q|}{|h^{-1}Q|}.
\end{align*} 

\noindent {\bf Case B.} Suppose $\gamma \geq 1$. Choose $\delta > 0$ such that $\beta+\delta < 1$. If $0\leq x< y$  we have 
$$|g(x)-g(y)|\leq  (1+\gamma)\gamma y^{\gamma-1}|x-y|\leq  (1+\gamma)\gamma y^{\gamma-(\beta+\delta)}|x-y|^{\beta+\delta}.$$
Define
$$\phi(x)=  \frac{|W|^{\beta - 1/p}}{(\gamma+1)^2 b^{\gamma}}  g\cdot 1_W.$$
We claim that $\phi$ is a $(\beta,\beta+\delta,p)$-H\"older  atom supported on $W$.  Indeed
$$|\phi|_\infty \leq \frac{|W|^{\beta - 1/p} (\gamma+1)b^{\gamma}}{(\gamma +1)^2 b^{\gamma}}\leq |W|^{\beta - 1/p}.$$
and for every $x,y \in W=[a,b]$, with  $0\leq x < y$, we have
\begin{align*}
|\phi(x)-\phi(y)|&\leq \frac{  (1+\gamma)\gamma y^{\gamma-(\beta+\delta)} |W|^{\beta - 1/p }}{(\gamma+1)^2 b^\gamma} |x-y|^{\beta + \delta} \\
&\leq \frac{ |W|^{\beta - 1/p }}{ b^{\beta+\delta}} |x-y|^{\beta +  \delta} \\
&\leq \frac{ |W|^{\beta - 1/p }}{ b^{\beta + \delta}} |x-y|^{\beta +  \delta} \\
&\leq |W|^{\beta - 1/p-(\beta+\delta)}  |x-y|^{\beta +  \delta}.
\end{align*} 
and now we can complete the proof exactly as in Case A. 
\end{proof}

\subsection{$1/\beta$-bounded  variation potentials} Here we consider the interval $[0,1]$ with the dyadic grid and the Lebesgue measure. 

\begin{proposition} \label{bvp}  There is $\Cll{uuug}$, that does not depend  on $h$, with the following property. 
\begin{itemize}
\item[A.] Suppose that  the  function $$g\colon \Omega\rightarrow \mathbb{R}$$  has finite $1/(\beta+\epsilon)$-bounded variation. Then there is a finite partition by intervals  $\{\Omega_1', \dots, \Omega_n'\}$ of $\Omega'$  such that if  $W \in \mathcal{P}$ and $W\subset h^{-1}(\Omega_i')$, for some $i$, then 
$$|g1_W|_{\mathcal{B}^\beta_{p,q}(W,\mathcal{P}_W,\mathcal{A}^{sz}_{p,q})}  \leq  \Crr{uuug} (\sup_W g)   |W|^{1/p-\beta}.$$
\item[B.] Suppose  additionally  that the function  $g$ is the jacobian of $h$, that is, 
$$m(h(A))=\int_A g \ dm.$$
for every measurable set $A$. Then   if  $W, Q\in \mathcal{P}$ be  such that $W \subset \Omega$, $Q \subset \Omega'_i$ for some $i$, and $h(W)\subset Q$ then 
$$|g1_W|_{\mathcal{B}^\beta_{p,q}(W,\mathcal{P}_W,\mathcal{A}^{sz}_{p,q})}  
\leq 3\Crr{uuug} \frac{|Q|}{|h^{-1}(Q)|}  |W|^{1/p-\beta}.$$
\end{itemize}
\end{proposition}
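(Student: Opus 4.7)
The plan is to mirror the structure of Proposition~\ref{holderr}, replacing the H\"older atom by a $(\beta,\beta+\epsilon,p)$ bounded-variation atom of the family $\mathcal{A}^{sz}_{p,q}$, whose natural scaling on $W$ is $|\phi|_\infty \leq |W|^{\beta-1/p}$ together with $V_{1/(\beta+\epsilon)}(\phi) \leq |W|^{\beta-1/p}$; the second bound is consistent with the H\"older atom conditions, as one sees by converting a $(\beta+\epsilon)$-H\"older estimate into a $1/(\beta+\epsilon)$-variation estimate via the telescoping inequality $\sum |\phi(x_i)-\phi(x_{i-1})|^{1/(\beta+\epsilon)} \leq C^{1/(\beta+\epsilon)}\sum |x_i-x_{i-1}|$. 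First I would fix the partition $\{\Omega_1',\dots,\Omega_n'\}$ by a greedy argument: since the total $1/(\beta+\epsilon)$-variation of $g$ on $\Omega$ is finite, I split $\Omega$ into finitely many intervals $\tilde\Omega_1,\dots,\tilde\Omega_n$ on each of which $g$ is either identically zero or satisfies $V_{1/(\beta+\epsilon)}(g,\tilde\Omega_i) \leq \tfrac12\, \inf_{\tilde\Omega_i}|g|$, and then set $\Omega_i':=h(\tilde\Omega_i)$, refining further so that $h$ is monotone on each piece.

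For Part A, on $W\subset h^{-1}(\Omega_i')\subset \tilde\Omega_i$ I would define
\[
\phi(x) := \frac{|W|^{\beta-1/p}}{1+\sup_W|g|}\, g(x)\, 1_W(x).
\]
The $L^\infty$ bound is immediate, and the key input for the variation condition of the atom is
\[
V_{1/(\beta+\epsilon)}(g\cdot 1_W) \;\leq\; C\bigl(V_{1/(\beta+\epsilon)}(g,W) + \sup_W|g|\bigr),
\]
where the additive $\sup_W|g|$ comes from the two boundary jumps of $g\cdot 1_W$ at the endpoints of $W$. Thus $\phi$ is a BV atom up to an absolute constant, which is absorbed into $\Crr{uuug}$; inverting the normalisation gives $g\cdot 1_W = (1+\sup_W|g|)|W|^{1/p-\beta}\phi$, yielding Part~A, with the regime $\sup_W|g|<1$ handled by dominating $g\cdot 1_W$ by an indicator atom of total mass $\sup_W|g|$.

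For Part B, I apply the mean-value identity of Proposition~\ref{holderr}(B): for $x\in h^{-1}(Q)$,
\[
g(x) = \frac{1}{|h^{-1}(Q)|}\int_{h^{-1}(Q)} g(y)\,dm(y) + \frac{1}{|h^{-1}(Q)|}\int_{h^{-1}(Q)} (g(x)-g(y))\,dm(y),
\]
the first term being $|Q|/|h^{-1}(Q)|$ by the jacobian property and the second being bounded by the oscillation of $g$ on $h^{-1}(Q)$, hence by $V_{1/(\beta+\epsilon)}(g,h^{-1}(Q)) \leq \tfrac12\inf_{h^{-1}(Q)} g \leq \tfrac12|Q|/|h^{-1}(Q)|$ by the partition choice; therefore $\sup_W g \leq \tfrac32\,|Q|/|h^{-1}(Q)|$, and substituting into Part~A gives Part~B with a factor of $3$. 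The main obstacle is the partition step: one must split $\Omega$ so that on each piece the variation of $g$ is a fraction of its infimum. This is straightforward where $g$ is bounded away from zero, by continuity of the variation as a function of the interval, and the potential trouble at zeros of $g$ is handled by placing dedicated zero pieces in the partition on which both sides of the desired inequality vanish.
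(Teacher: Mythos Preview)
Your approach is essentially the same as the paper's: normalise $g\cdot 1_W$ to a bounded-variation atom, partition $\Omega'$ to control the variation on each piece, and for Part~B use the Jacobian identity to bound $\sup_W g$ by $|Q|/|h^{-1}(Q)|$. A few technical points differ.

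The paper's atom class $\mathcal{A}^{bv}_{s,p,\beta}(W)$ uses $1/\beta$-variation measured \emph{on $W$ only}, so the boundary jumps you worry about do not enter; the normalisation is $\phi = |W|^{\beta-1/p}\bigl(\mathrm{var}_{1/\beta}(g,W) + \sup_W g\bigr)^{-1} g\cdot 1_W$, which gives both atom bounds directly, without your ``$1+$'' and without a separate treatment of $\sup_W|g|<1$. The role of the $\epsilon$ is that finite $1/(\beta+\epsilon)$-variation lets one partition so that the \emph{coarser} $1/\beta$-variation on each piece is below any prescribed threshold.

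For the partition the paper uses the fixed threshold $\min\{\sup_\Omega g,\,1/\Crr{lm}\}$ rather than your $\tfrac12\inf_{\tilde\Omega_i}|g|$. Your choice has a genuine gap: if $g$ has an isolated zero (e.g.\ $g(x)=x$ near $0$) there is no interval containing that point on which either $g\equiv 0$ or $V(g)\le\tfrac12\inf|g|$ holds, so ``dedicated zero pieces'' do not rescue the construction. In the paper's setting $h$ is bi-Lipschitz, hence $\inf g>0$ and the issue is moot, but the fixed-threshold partition avoids it structurally; in Part~B the threshold $1/\Crr{lm}$ is what gets compared to $|Q|/|h^{-1}(Q)|$ to produce the factor~$3$.
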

\begin{proof}   Let $W\in \mathcal{P}$, with $W\subset \Omega$.  Define the function 
$$\phi = \frac{|W|^{s-1/p}}{var_{1/\beta}(g,W)+ \sup_W  g}  g 1_W.$$
Of course $|\phi|_\infty \leq |W|^{s-1/p}$ and $var_{1/\beta}(\phi, W)\leq  |W|^{s-1/p} .$ So $\phi$ is a $\mathcal{A}^{bv}_{s, p,\beta}(W)$-atom, consequently 
$$
|g1_W|_{\mathcal{B}^\beta_{p,q}(W,\mathcal{P}_W,\mathcal{A}^{sz}_{p,q})}  \leq \Crr{uuug} ( var_{1/\beta}(g,W)+ \sup_W  g)|W|^{1/p-s}.  $$
Since the $1/(\beta+\epsilon)$-bounded variation of $g$  is finite, we can find a finite partition by intervals  $\{\Omega_1', \dots, \Omega_n'\}$ of $\Omega'$ such that  for each $i$ 
$$var_{1/\beta}(g,h^{-1}(\Omega_i'))=\sup \big( \sum_{k=0}^{n}  |g(x_{k+1})-g(x_k)|^{1/\beta}\big)^{\beta} < \min \{ \sup_\Omega g, \frac{1}{\Crr{lm}}  \}.$$
where the sup runs over all possible finite sequences $x_k < x_{k+1}$ and $x_k \in int \ h^{-1}(\Omega_i')$. \\

\noindent Let $W\subset h^{-1}(Q)$, with $Q\subset \Omega_i'$ for some $i$. \\

\noindent {\bf Proof of A.} We have  
$$|g1_W|_{\mathcal{B}^\beta_{p,q}(W,\mathcal{P}_W,\mathcal{A}^{sz}_{p,q})}  \leq 2 \Crr{uuug} ( \sup_\Omega  g)|W|^{1/p-s}.  $$
\noindent {\bf Proof of B.} Note that for every $x\in h^{-1}(Q)$
\begin{align*}
g(x)&=\frac{1}{|h^{-1}(Q)|}\int_{h^{-1}(Q)} g(x) \ dm(y)\\
&= \frac{1}{|h^{-1}(Q)|}\int_{h^{-1}(Q)} g(y) \ dm(y)+   \frac{1}{|h^{-1}(Q)|}\int_{h^{-1}(Q)} g(x)-g(y) \ dm(y)\\
&\leq   \frac{|Q|}{|h^{-1}(Q)|}+ \frac{1}{\Crr{lm}}\leq 2  \frac{|Q|}{|h^{-1}(Q)|},
\end{align*}
so 
\begin{align*}
|g1_W|_{\mathcal{B}^\beta_{p,q}(W,\mathcal{P}_W,\mathcal{A}^{sz}_{p,q})}  &\leq \Crr{uuug} ( var_{1/\beta}(g,W)+ \sup_W  g)|W|^{1/p-s}\\
&\leq \Crr{uuug} ( \frac{1}{\Crr{lm}}+2  \frac{|Q|}{|h^{-1}(Q)|})|W|^{1/p-s}\\
&\leq 3\Crr{uuug}  \frac{|Q|}{|h^{-1}(Q)|}|W|^{1/p-s}.
\end{align*}
\end{proof}

\section{Some strongly regular domains in $\mathbb{R}^D$}

Let $(I,m,\mathcal{P})$ be a measure space with a good grid. Recall that a  subset $\Omega\subset I$ is a {\bf $(\alpha,\Cll{rpf},t )$-strongly  regular domain (see  \cite{smania-besov})}  if  for every   $Q\in \mathcal{P}^i$,  $i\geq t$  and $k\geq k_0(Q\cap \Omega)$ there are families  $\mathcal{F}^k(Q\cap \Omega) \subset \mathcal{P}^k$  satisfying 
\begin{itemize}
\item[A.] We have $Q\cap \Omega = \cup_{k\geq k_0(Q\cap \Omega)} \cup_{P\in \mathcal{F}^k(Q\cap \Omega)} P$.
\item[B.] If $P,W \in \cup_{k} \mathcal{F}^k(Q\cap \Omega)$ and $P\neq W$ then $P\cap W=\emptyset$. 
\item[C.] We have
\begin{equation} \sum_{P\in \mathcal{F}^k(Q\cap \Omega)} |P|^{\alpha}\leq \Crr{rpf}  |Q|^{\alpha}.\end{equation} 
\end{itemize}

 It is easy to prove that 

\begin{proposition} \label{d-1} There is $\Cll{dim} > 0$ such that the following holds. Let 
$K= \cup_i \hat{M}_i,$
where $M_i$ is a compact  $(D-1)$-dimensional  $C^1$-manifold with boundary embedded in  $\mathbb{R}^D$. Moreover assume that  for every $x \in \partial K$ there is $r_x > 0$ such that
$$\#\{ i \colon B(x,r) \cap  M_i \neq \emptyset \}\leq N$$
for every $r < r_x$. Then there is $r_0$ such that for every $x \in K$ and $r \in (0,r_0)$ we have
\begin{equation} \label{srd} \frac{1}{\Crr{dim}} r^{D-1} \leq m_{D-1}(B(x,r)\cap K)\leq \Crr{dim} N r^{D-1}.\end{equation} Here $m_{D-1}$ denotes the $(D-1)$-dimensional Hausdorff measure. We emphasize that $\Crr{dim}$ does not depend  on $K$. 
\end{proposition}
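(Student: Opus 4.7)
The plan is to reduce to Ahlfors regularity of a single smooth piece and then sum using the multiplicity hypothesis. First I would establish the pointwise bound: for each $i$, every $\epsilon>0$, and a uniform radius $r_{i,\epsilon}>0$,
$$\tfrac{1}{2}\omega_{D-1} r^{D-1}\leq m_{D-1}(B(x,r)\cap M_i) \leq 2\omega_{D-1} r^{D-1}$$
for every $x\in M_i$ and $r<r_{i,\epsilon}$, where $\omega_{D-1}$ is the volume of the unit $(D-1)$-ball. This comes from a standard tangent-plane/graph representation: since $M_i$ is a compact $C^1$-manifold (with boundary), any $x\in M_i$ has a neighborhood in which $M_i$ is a $C^1$-graph over $T_x M_i$ with uniformly small Lipschitz remainder; the area formula then gives $m_{D-1}(B(x,r)\cap M_i)$ within a factor $1\pm\epsilon$ of the volume of the projection onto $T_x M_i$, and the projection sits between two disks of radii $\asymp r$. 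Compactness of $M_i$ makes the modulus of continuity of $x\mapsto T_xM_i$ uniform, so $r_{i,\epsilon}$ is independent of $x$; along $\partial M_i$ the disk is replaced by a half-disk, which only changes constants.

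For the \emph{lower bound} of the proposition, any $x\in K$ lies in some $M_{i_0}$ and $B(x,r)\cap K\supset B(x,r)\cap M_{i_0}$, so the left-hand inequality of the local bound immediately yields $m_{D-1}(B(x,r)\cap K)\geq \tfrac{1}{2}\omega_{D-1} r^{D-1}$. For the \emph{upper bound}, observe that $K$ has empty interior in $\mathbb{R}^D$ (being a finite or countable union of $(D-1)$-manifolds, hence of Hausdorff dimension at most $D-1$), so $\partial K=K$ and the multiplicity hypothesis applies to \emph{every} $x\in K$. Using compactness of $K$, extract a finite subcover of $\{B(x,r_x/2)\}_{x\in K}$ and let $r_0$ be the minimum of $r_{i,\epsilon}$ (over the finitely many relevant indices) and of $r_{x_j}/2$ over the centers of the subcover. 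For $r<r_0$ any $B(x,r)$ is contained in some $B(x_j,r_{x_j})$, so at most $N$ of the $M_i$'s meet $B(x,r)$, and summing the upper part of the local bound gives
$$m_{D-1}(B(x,r)\cap K)\leq \sum_{i:\, B(x,r)\cap M_i\neq\emptyset} 2\omega_{D-1} r^{D-1}\leq 2N\omega_{D-1}r^{D-1}.$$
Choosing $\Crr{dim}=2\omega_{D-1}$ completes the proof, and this constant depends only on $D$, as advertised.

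The main obstacle is keeping constants uniform across the (possibly large) family $\{M_i\}$. Uniformity of the local Ahlfors bound in $x\in M_i$ is a routine compactness argument for a fixed $i$, but the proposition's assertion that $\Crr{dim}$ does not depend on $K$ forces one to make the constants depend only on the dimension rather than on the $C^1$-norms of the pieces. The key observation that unlocks this is that the argument above only needs the local bound to hold on scales smaller than $r_0$, and $r_0$ is allowed to depend on $K$; once $r$ is small relative to the geometry of each piece, the graph representation becomes arbitrarily close to flat and the constants $\tfrac{1}{2}\omega_{D-1}$, $2\omega_{D-1}$ are universal. The multiplicity factor $N$ is then the \emph{only} place where $K$-dependent combinatorics enters the final inequality.
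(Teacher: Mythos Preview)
The paper does not actually prove this proposition; it merely prefaces it with ``It is easy to prove that'' and moves on. Your argument is the standard one the authors presumably had in mind: local Ahlfors regularity of a single $C^1$ hypersurface via graph parametrization, the lower bound from the single piece through $x$, and the upper bound by summing over the at most $N$ pieces meeting the ball.

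One small point worth tightening: in the upper-bound step you invoke $m_{D-1}(B(x,r)\cap M_i)\leq 2\omega_{D-1}r^{D-1}$, but your local estimate was stated only for centers $x\in M_i$, whereas here $x\in K$ need not lie on $M_i$. The usual fix is to pick $y\in B(x,r)\cap M_i$ and use $B(x,r)\subset B(y,2r)$, which costs a factor $2^{D-1}$ in the constant. Together with the half-disk correction at boundary points (which you acknowledge), this means your final choice $\Crr{dim}=2\omega_{D-1}$ is not quite right; one should take something like $\Crr{dim}=\max\{2^{D}\omega_{D-1},\,4/\omega_{D-1}\}$. This is pure bookkeeping and does not affect the substance of the argument, and the constant still depends only on $D$ as claimed.
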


A {\bf $N$-good $C^r$ domain}  $P$ in $\mathbb{R}^D$  is an open subset of $\mathbb{R}^D$  for which $\partial P$ is compact  and there a finite number of $(D-1)$-dimensional $C^m$ manifolds with boundary $M_i$ embedded in $\mathbb{R}^D$, with $m\geq 1$, such that 
$$\partial P \subset \cup_{i\leq  k} M_i$$
and  such that for every $x \in \partial P$ there is $r_x > 0$ satisfying 
$$\#\{ i \colon B(x,r) \cap  M_i \neq \emptyset \}\leq N$$
for every $r < r_x$. A simple example of a $N$-good $C^r$ domain is a convex set defined as the intersection of a finite number of half-spaces in $\mathbb{R}^D$. 

We say that $N$-good $C^1$ domain has a {\it regular Whitney stratification}  if we can choose the the manifolds with boundary $M_i$ such that $\cup_i M_i$ has a Whitney regular $C^1$  stratification. We will not need this property in this section, however it will be useful to study generic piecewise expanding maps on $\mathbb{R}^D$  in Section \ref{pie} following an argument similar to  Cowieson \cite{cowieson}.

\begin{corollary}\label{Ngood} For every $N$ there exists $\Cll{srd2}$ such that the following holds. For every $N$-good $C^r$ domain  $P$ in $\mathbb{R}^D$ there exists $t$ such that $P$ is a  $(1-\frac{1}{D},\Crr{srd2},t)$-strongly regular domain.
\end{corollary}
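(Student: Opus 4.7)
The plan is to use a Whitney-style dyadic decomposition of $Q\cap P$ and bound the sums in item (C) of the definition by splitting the cubes at each level into those forced by proximity to $\partial P$ and those forced by proximity to $\partial Q$.

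For each $k\geq k_0(Q\cap P)$ I would let $\mathcal{F}^k(Q\cap P)\subset \mathcal{P}^k$ consist of the maximal dyadic cubes $P'$ contained in $Q\cap P$, meaning either $k=k_0(Q\cap P)$ or the dyadic parent $\tilde P'$ of $P'$ is not contained in $Q\cap P$. By construction these families are pairwise disjoint and cover $Q\cap P$, so (A) and (B) in the definition of a strongly regular domain are automatic. For $k>k_0$ I would classify each $P'\in\mathcal{F}^k$ into \emph{type (a)} if $\tilde P'$ meets $P^c$ (so $P'$ lies within Euclidean distance $C\,2^{-k}$ of $\partial P\cap Q$) and \emph{type (b)} if $\tilde P'$ meets $Q^c$ (so $P'$ lies in the inner $C\,2^{-k}$-neighborhood of $\partial Q$). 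Type (b) is immediate from (\ref{pppd}) applied with $\eta=1$: that inner neighborhood has measure $\lesssim 2^{-(k-i)}|Q|$ where $i$ is the depth of $Q$, so there are $\lesssim 2^{(D-1)(k-i)}$ such cubes and $\sum_{\text{type (b)}}|P'|^{1-1/D}\lesssim 2^{(D-1)(k-i)}\cdot 2^{-k(D-1)}\sim |Q|^{1-1/D}$.

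Type (a) is where Proposition \ref{d-1} enters. I would choose $t$ large enough so that $\operatorname{diam}(Q)<\min_{x\in\partial P}r_x$ for every $Q\in \mathcal{P}^i$ with $i\geq t$; this is possible by compactness of $\partial P$ combined with a finite subcover by balls $B(x_j,r_{x_j}/2)$. Applying Proposition \ref{d-1} to $K=\bigcup_i M_i$ then gives $m_{D-1}(\partial P\cap Q)\leq \Crr{dim}N(\operatorname{diam} Q)^{D-1}$. Its lower bound forces any maximal $2^{-k}$-separated subset of $\partial P\cap Q$ to have cardinality $\lesssim N(\operatorname{diam} Q\cdot 2^k)^{D-1}$, and inflating each point of this packing to a ball of radius $C\,2^{-k}$ in $\mathbb{R}^D$ yields an ambient set of $D$-volume $\lesssim N(\operatorname{diam} Q)^{D-1}\cdot 2^{-k}$ that contains every type (a) cube. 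Dividing by $|P'|\sim 2^{-kD}$ gives a count of $\lesssim N(\operatorname{diam} Q)^{D-1}\,2^{k(D-1)}$, and hence $\sum_{\text{type (a)}}|P'|^{1-1/D}\lesssim N(\operatorname{diam} Q)^{D-1}\sim N|Q|^{1-1/D}$.

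Combining the two contributions gives (C) with $\Crr{srd2}$ depending only on $N$ and the dimensional constants coming from the dyadic grid on $\mathbb{R}^D$, finishing the proof. The main obstacle I anticipate is the tubular volume estimate for $\partial P\cap Q$: one must control a $D$-dimensional volume purely from a $(D-1)$-dimensional measure bound together with the tube radius, and this is exactly what both directions of the inequality in Proposition \ref{d-1} are designed to permit—the lower bound controls the number of balls in the $2^{-k}$-packing, while the upper bound controls $m_{D-1}(\partial P\cap Q)$ itself in terms of $(\operatorname{diam} Q)^{D-1}$.
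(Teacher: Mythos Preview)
Your argument is correct and, in effect, reproves from scratch the black-boxed Proposition~\ref{homo-quasi} from \cite{smania-homo} that the paper simply cites together with Proposition~\ref{d-1}. The paper's proof is a one-line reference; you give the underlying Whitney decomposition and the packing/covering argument explicitly, which makes your version self-contained.

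One small redundancy worth noting: your type~(b) case is actually empty. Since $P'\subset Q$ with $P'\in\mathcal{P}^k$, $Q\in\mathcal{P}^i$, and $k>i$, the parent $\tilde P'\in\mathcal{P}^{k-1}$ lies at level $\geq i$ and contains $P'\subset Q$; by the nesting property of a good grid this forces $\tilde P'\subset Q$. Hence ``$\tilde P'\not\subset Q\cap P$'' can only fail via $\tilde P'\not\subset P$, i.e.\ every cube in $\mathcal{F}^k$ is already of type~(a). (The edge case $k_0=i$ gives $\mathcal{F}^{k_0}=\{Q\}$ and the bound is trivial.) So the entire weight is carried by your type~(a) estimate, which is exactly where Proposition~\ref{d-1} enters: the upper Ahlfors bound controls $m_{D-1}(\partial P\cap \overline{Q})$, the lower bound converts this into a cardinality bound on a $2^{-k}$-net, and inflating to $D$-balls gives the tubular volume. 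That chain is sound.

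A minor technical point you should make explicit: the centres in Proposition~\ref{d-1} must lie in $K=\bigcup_i M_i$, so when bounding $m_{D-1}$ of the region containing the packing you should centre the large ball at one of the net points $x_j\in\partial P\subset K$ rather than at an arbitrary point of $Q$. Also, the $r_0$ of Proposition~\ref{d-1} already depends on $P$ (through $K$), so your choice of $t$ must accommodate both $r_0$ and a uniform lower bound on the $r_x$; the compactness argument you sketch handles the latter, and the former is just another constraint on $t$. None of this affects the constant $\Crr{srd2}$, which indeed depends only on $N$, $D$, and the grid constants.
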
 
\begin{proof}  This follows from Proposition \ref{d-1} and  Proposition \ref{homo-quasi} in \cite{smania-homo}. 
\end{proof}

\begin{remark} The class of strongly regular domains in $\mathbb{R}^D$ is much wider than the class of $N$-good $C^1$ domains. For instance a domain whose boundary  $K$ is  a cone with circular base is a strongly regular domain since it satisfies (\ref{srd}) if we replace   $\Crr{dim}$ for some appropriated constant. Indeed certain domains with fractal boundary are also  strongly regular domains (see Remark \ref{homo-juliaex} in \cite{smania-homo}). However the advantage of $N$-good $C^1$ domains is that 
$\Crr{dim}$ does not depend on the particular domain we are considering, that is very handy for estimate the essential spectral radius of transfer operators.
\end{remark}

\vspace{1cm}
\centerline{ \bf III. THE TOY MODEL.}
\addcontentsline{toc}{chapter}{\bf III. THE TOY MODEL.}
\vspace{1cm}

\section{Linear expanding map acting on  the circle} 

  Let $\ell\in \mathbb{N}\setminus \{0,1\}$  and define  $f_\ell\colon [0,1]\rightarrow [0,1]$ as $$f_\ell(x)=\ell x \mod 1.$$
Let $\mathcal{D}_\ell^k$ be the partition of $[0,1]$ in $\ell^k$ intervals with same size.  Then $\mathcal{D}_\ell=(\mathcal{D}^k_\ell)_{k\in \mathbb{N}}$ is a good grid.   The  map $f_\ell$ is a classic example of  expanding map. The Lebesgue measure $m$ on $[0,1]$ is an invariant probability for $f_\ell$. Our goal is to prove the Lasota-Yorke inequality  for $f_\ell$ in the space $\mathcal{B}^s_{1,1}([0,1],m,\mathcal{D}_\ell)$, with $s\in (0,1)$.  That is, we will find $j> 0$, $C > 0$ and $\lambda \in (0,1)$ such that 
$$|\mathcal{L}_\ell^j(\phi)|_{\mathcal{B}^s_{1,1}}\leq  C |\phi|_1 + \lambda  |\phi|_{\mathcal{B}^s_{1,1}}$$
for every $\phi \in \mathcal{B}^s_{1,1}([0,1],m,\mathcal{D}_\ell)$.

 This is not a new result, however the we with this  example since its simplicity allows us to give a very detailed and yet short proof of the Lasota-Yorke inequality that illustrate the  methods of this paper. 

Consider the Ruelle-Perron-Frobenious operador of $f_{\ell^j}$
$$(\mathcal{L}_{\ell^j} \psi)(x)=\sum_{i=0}^{\ell^j-1}  \frac{1}{\ell^j} \psi(\frac{x}{\ell^j}+ \frac{i}{\ell^j}).$$

We have that $\mathcal{D}_{\ell^j}^1$ is a markovian partition for $f_\ell$. Moreover if $P\in \mathcal{D}^k_\ell$ and  $k\geq j$, then  $Q=f_{\ell^j}(P)\in  \mathcal{D}^{k-j}_\ell$
and 
 $$\mathcal{L}_{\ell^j}  (a_{P})=\mathcal{L}_{\ell^j} (|P|^{s-1}1_P)=\frac{1}{\ell^j} |P|^{s-1}1_{Q}=\frac{1}{\ell^{sj}} |\ell^j|^{s-1} | P|^{s-1}1_{Q}=\frac{1}{\ell^{sj}} a_Q.$$
And if   $P\in \mathcal{D}^k_\ell$ and  $k < j$ then $P$ is an union of $\ell^{j-k}$ elements of $\mathcal{D}^j_\ell$, so
 $$\mathcal{L}_{\ell^j}  (a_{P})=\mathcal{L}_{\ell^j} (|P|^{s-1}\sum_{\substack{P\in \mathcal{D}^j_\ell\\Q\subset P }}1_Q)=\frac{\ell^{j-k}}{\ell^j} |P|^{s-1}= \ell^{-ks}a_{[0,1]}.$$

 By Arbieto and S. \cite[Proposition \ref{trans-canrep2}]{smania-transfer} there is $\Cll[name]{GC}$ such that  for every $P\in \mathcal{D}_\ell$ there are linear functionals $\phi\mapsto k^\phi_P$  in $(L^1)^\star$  such that every $\phi\in \mathcal{B}^s_{1,1}([0,1],m, \mathcal{D}_\ell)$  has a $\mathcal{B}^s_{1,1}$-representation 
 
 $$\phi= \sum_k  \sum_{P\in \mathcal{D}_\ell^k} k^\phi_P a_P$$ satisfying 
 $$\sum_k \sum_{P\in \mathcal{D}^k_\ell} |k_P^\phi|  \leq \Crr{GC} |\phi|_{\mathcal{B}^s_{1,1}}.$$
In particular
$$\mathcal{L}_{\ell^j}(\phi)=    \big( \sum_{k<  j}  \frac{1}{\ell^{ks}} \sum_{P\in \mathcal{D}_\ell^k} k^\phi_{P}  \big)a_{[0,1]}   + \frac{1}{\ell^{sj}} \sum_{k\geq j}  \sum_{P\in \mathcal{D}_\ell^k} k^\phi_P a_{f_\ell{P}}$$
and
\begin{align*} |\mathcal{L}_{\ell}^j(\phi)|_{\mathcal{B}^s_{1,1}}= |\mathcal{L}_{\ell^j}(\phi)|_{\mathcal{B}^s_{1,1}}&\leq    \sum_{k<  j}  \frac{1}{\ell^{ks}} \sum_{P\in \mathcal{D}_\ell^k} |k^\phi_{P}|   + \frac{1}{\ell^{sj}} \sum_{k\geq j}  \sum_{P\in \mathcal{D}_\ell^k} |k^\phi_P|\\
&\leq \Big(   \sum_{k<  j}  \frac{1}{\ell^{ks}} \sum_{P\in \mathcal{D}_\ell^k} |k_{P}|_{(L^1)^\star} \Big)   |\phi|_1+ \frac{\Crr{GC}}{\ell^{sj}} |\phi|_{\mathcal{B}^s_{1,1}}. \end{align*}
We can choose $j$ large enough to have 
$$\lambda = \frac{\Crr{GC}}{\ell^{js}} < 1$$
so we obtain the Lasota-Yorke inequality.

\vspace{1cm}
\centerline{ \bf IV. THE BESTIARY.}
\addcontentsline{toc}{chapter}{\bf  IV. THE BESTIARY.}
\vspace{1cm}

\section{Markovian expanding maps}
\label{markov}

Markovian maps arise in the very beginning of the study of the metric theory of expanding maps, as the Gauss map and the linear expanding maps on the circle.  The work of Ruelle \cite{ruelle1d} deals with the one-sided shift, another example of such maps. 

Sinai \cite{markovp}  constructed Markov partitions for expanding maps on manifolds \cite{tf}  and hyperbolic diffeomorphisms, See   Bowen \cite{bowen} and  Parry and Pollicott \cite{pp} for more details. 

Let $(I,m)$ be a probability space. Suppose that there is collection of subsets $\{I_1,\dots,I_n\}$ of $I$,with $n\geq 2$, a transformation 
$$f\colon \cup_i I_i  \rightarrow I$$
satisfying 
\begin{itemize}
\item[A.] We have $I_i\cap I_j=\emptyset$ for every $i\neq j$.
\item[B.]  $m(I_i)  > 0$ for every $i$  and $m(I\setminus \cup_i I_i)=0$.
\item[C.] The set $f(I_i)$ is measurable, and $f\colon I_i \rightarrow f(I_i)$ is a bijection with measurable inverse $h_i\colon f(I_i)\rightarrow I_i$ and  Jacobian $w$, that is, for every  measurable set $A\subset I_i$
$$m(f(A))=\int_A w \ dm.$$   
Moreover $\inf w > 1$.
\item[D.] For every $i$  the set $f(I_i)$ is a union of at least two elements of $\{I_1,\dots,I_n\}$. 
\end{itemize} 
Then we can define a sequence of partitions of $I$ recursively as  $\mathcal{P}^0=  \{I\}$, $\mathcal{P}^1=\{I_1,\dots,I_n\}$ and for $k\geq 1$

$$\mathcal{P}^{k+1}= \{ h_i(Q), \text {where }   Q\in \mathcal{P}^{k}  \text {and } Q\subset f(I_i)   \}.$$

Note that $\mathcal{P}=(\mathcal{P}^k)_k$ is a nested sequence of partitions and every element of $\mathcal{P}^k$ has at least two children.  Of course the $j$-th iteration $f^j$ of $f$ has similar properties. Indeed for every $P\in \mathcal{P}^j$ we have that 
$$f^j\colon P \rightarrow f^j(P)$$ 
is a bijection with Jacobian
$$w_P(x)=\Pi_{i=0}^{j-1}w(f^i(x))$$
and a measurable inverse, denoted $h_P$. Moreover $$f^j(P) \in \{f(I_1),\dots,f(I_n)\}.$$
Now assume additionally 
\begin{itemize}
\item[E.] (Bowen Condition) We have
$$\sup_k     \sup_{P\in \mathcal{P}^k} \sup_{x,y \in P}  |\sum_{i=0}^{k-1}\ln w(f^ix)-\sum_{i=0}^{k-1} \ln  w(f^iy)|      < \infty$$
and there is $\Cll{sd}> 0$ such that 
$$\frac{1}{\Crr{sd}}\leq     w(x)\leq \Crr{sd}.$$
for every $x$.
\end{itemize} 
Define  for  each $P\in \mathcal{P}^k$ the function $w_P\colon P\rightarrow \mathbb{R}^\star_+$ as
$$w_P(x)= \Pi_{i=0}^{k-1}w(f^i(x)).$$
It easily follows that there is $\Cll{sd2}> 0$ such that for every $x,P, k$ satisfying $x\in P\in \mathcal{P}^k$
$$ \frac{1}{\Crr{sd2} }  w_P(x) \leq  \frac{1}{ |P|}\leq \Crr{sd2} w_P(x).$$
and $\mathcal{P}$ is a good grid.   We can define a metric in $I$  as
$$d(x,y)=\inf \{|Q|\colon   \  x,y \in Q\in \cup_k \mathcal{P}^k\}.$$
Then $(I,m, d)$ is an Ahlfors-regular metric space ($D=1$). Note that every inverse branch $h_P$, with $P\in \mathcal{P}^j$  is bi-Lipchitz and satisfies (\ref{mc}) for some $\Crr{lm}(j)$ that may depend on $j$. Now assume additionally 
\begin{itemize}
\item[F.] There is some $C$ such  that $w$ satisfies
$$|w(x)-w(y)|\leq C d(x,y)^{\beta+\epsilon}$$
for every $x,y \in I_i$, $i\leq n$.
\end{itemize}

\begin{theorem} \label{marko} Let $f$ be a Markovian map as above. Then
$$r_{ess}(\Phi,\mathcal{B}^s_{p,q}(I,m,\mathcal{P}))\leq \big( \liminf_{j\rightarrow \infty} \big( \sum_{P\in   \mathcal{P}^j}   |P|^{1+sp'}\big)^{1/j}\big)^{1/p'}.$$
In particular $r_{ess}(\Phi)\leq (\inf w)^{-s}< 1.$
\end{theorem}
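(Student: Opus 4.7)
The plan is to prove a Lasota--Yorke inequality
\begin{equation*}
|\Phi^j \phi|_{\mathcal{B}^s_{p,q}} \;\leq\; C_j\, |\phi|_{L^1} \;+\; \lambda_j\, |\phi|_{\mathcal{B}^s_{p,q}}, \qquad \lambda_j \;\leq\; C\Bigl(\sum_{P\in\mathcal{P}^j} |P|^{1+sp'}\Bigr)^{1/p'},
\end{equation*}
and then convert it into the spectral bound via Hennion's theorem, using the compact embedding $\mathcal{B}^s_{p,q} \hookrightarrow L^1$ from \cite{smania-besov}. I would start by writing $\phi$ in the atomic decomposition of Proposition~2.5 of \cite{smania-transfer}, $\phi = \sum_k \sum_{P\in\mathcal{P}^k} k_P^\phi\, a_P$, with the usual $\ell^q(\ell^p)$-control of coefficients by $|\phi|_{\mathcal{B}^s_{p,q}}$, and then analyze $\Phi^j(a_P)$.

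For $P \in \mathcal{P}^k$ with $k \ge j$, write $P = h_{P_0}(Q)$ where $P_0 \in \mathcal{P}^j$ is the unique $j$-th level ancestor of $P$ and $Q = f^j(P) \in \mathcal{P}^{k-j}$ (the Markov structure guarantees $f^j(P_0) = I$). A direct change of variables gives
\begin{equation*}
\Phi^j(a_P) \;=\; |P|^{s-1/p}\, g_{P_0}\cdot 1_Q, \qquad g_{P_0}(y) = \frac{1}{w_{P_0}(h_{P_0}(y))},
\end{equation*}
i.e., $|P|^{s-1/p}$ times the Jacobian of $h_{P_0}:Q \to P$. Condition~(E) yields the bounded distortion $|P|/|Q| \asymp |P_0|$, and combined with (F) telescoped along the orbit it implies that $g_{P_0}$ is $D(\beta+\epsilon)$-Hölder with constants uniform in $j$ and $P_0$. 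Proposition~\ref{holderr}(B), with the substitution $h \rightsquigarrow h_{P_0}$, $W \rightsquigarrow Q$, and the proposition's $Q$ playing the role of $P_0$, gives
\begin{equation*}
|g_{P_0}\,1_Q|_{\mathcal{B}^s_{p,q}(Q,\mathcal{P}_Q,\mathcal{A}^{sz}_{p,q})} \;\leq\; C\,\frac{|P|}{|Q|}\,|Q|^{1/p-s} \;\asymp\; C\,|P_0|\,|Q|^{1/p-s},
\end{equation*}
hence $|\Phi^j(a_P)|_{\mathcal{B}^s_{p,q}(Q)} \leq C\,|P_0|^{s+1/p'}$. Proposition~\ref{lip} applied to the bi-Lipschitz map $h_{P_0}$ supplies the regular-domain property that allows $\Phi^j(a_P)/(C|P_0|^{s+1/p'})$ to be viewed as a bona fide atom on $Q$, with geometric constants uniform in $j$ and $P_0$.

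To assemble the final bound, for each $k \ge j$ and $Q \in \mathcal{P}^{k-j}$ I would apply Hölder's inequality in the $P_0 \in \mathcal{P}^j$-variable to the sum $\sum_{P_0} |k_{h_{P_0}(Q)}^\phi|\, |P_0|^{s+1/p'}$, obtaining $\bigl(\sum_{P_0 \in \mathcal{P}^j}|P_0|^{1+sp'}\bigr)^{1/p'}\bigl(\sum_{P_0}|k_{h_{P_0}(Q)}^\phi|^p\bigr)^{1/p}$. Taking $\ell^p$ in $Q$ and then $\ell^q$ in $k$ recovers the bound $\lambda_j |\phi|_{\mathcal{B}^s_{p,q}}$; the finitely many levels $k<j$ are controlled using $|\Phi^j a_P|_\infty \lesssim |a_P|_{L^1}$, yielding the $C_j|\phi|_{L^1}$ term. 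Hennion's theorem then delivers $r_{ess}(\Phi) \le \liminf_j \lambda_j^{1/j}$, giving the first assertion, while the explicit inequality $r_{ess}(\Phi) \le (\inf w)^{-s} < 1$ follows from $\sum_{P\in\mathcal{P}^j}|P|^{1+sp'} \le (\sup_P|P|)^{sp'}\sum_P|P| \le C(\inf w)^{-jsp'}$. The main obstacle is this final reassembly step: one must verify that the Hölder inequality over $P_0$ actually produces an admissible atomic representation of $\Phi^j \phi$ with the claimed $\ell^q(\ell^p)$-control, which hinges critically on the \emph{uniformity in $j$ and $P_0$} of the geometric and analytic constants in Propositions~\ref{lip} and \ref{holderr}.
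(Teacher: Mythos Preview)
Your strategy is the same as the paper's: decompose $\phi$ atomically, push each atom through $\Phi^j$ using the H\"older--Jacobian estimate of Proposition~\ref{holderr}(B), and reassemble. The paper, however, does not carry out your direct H\"older-in-$P_0$ reassembly by hand; it instead packages exactly that step into the ``essential slicing'' machinery of \cite{smania-transfer} (Proposition~11.2 and Corollary~10.1 there), writing $\Phi^j = K_j + R_j$ with $K_j$ finite rank and $|R_j|$ controlled by $(\sum_{R\in\Lambda_j}\Theta_R^{p'})^{1/p'}$. Your H\"older inequality over $P_0$ is precisely the computation hiding inside that black box, so the obstacle you flag is real but already solved in the companion paper.

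There is one conceptual point you have backwards. You invoke Proposition~\ref{lip} on the bi-Lipschitz map $h_{P_0}$ to certify the regular-domain property. In the Markovian setting this is unnecessary and obscures the key simplification: because the partition is Markov, for $Q\in\mathcal P^k$ with $Q\subset R\in\Lambda_j$ one has $h_R^{-1}(Q)=f^j(Q)\in\mathcal P^{k-j}$ \emph{exactly}. An element of the grid is trivially a $(1-sp,1,\lambda)$-regular domain with $\lambda$ as close to $0$ as one wishes; the paper exploits precisely this to force $\lambda_{DGD2}$ small enough that $\max\{\lambda_{DC2}^\epsilon,\lambda_{DGD2}^{1/p}\}=\lambda_{DC2}^\epsilon$, which is what drives the sharp exponent. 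Using Proposition~\ref{lip} instead would route you through bi-Lipschitz constants of $h_{P_0}$, and while the \emph{ratios} $C_{lm11}/C_{lm1}$ are controlled by bounded distortion, you lose the freedom to take $\lambda_{DGD2}\to 0$.

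Two minor corrections. First, $f^j(P_0)=I$ is false unless every branch is onto; condition~(D) only gives $f^j(P_0)\in\{f(I_1),\dots,f(I_n)\}$, which is why the paper works with $\Lambda_j=\mathcal P^{j+i_0}$ for a fixed $i_0\geq 2$ so that $\tilde h_R^{-1}(R)$ is a union of elements of $\mathcal P^{i_0}$. Second, you apply Proposition~\ref{holderr}(B) with its $Q$ taken to be $P_0$ itself, but then $h_{P_0}^{-1}(P_0)=f^j(P_0)$ has diameter of order one and the hypothesis $C_{\mathrm{hold}}(\operatorname{diam} h^{-1}Q)^{\beta+\epsilon}\leq |Q|/|h^{-1}Q|$ fails for large $j$; the paper's $i_0$-cutoff and the choice of $\delta$ are exactly what make this hypothesis hold uniformly.
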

\begin{proof}
We have that $$g_P(x)=  \frac{1}{w_P(h_P(x))}$$
is the Jacobian of $h_P$.   Using the usual bounded distortion argument one can show that there is $\Cll{lipp}$ such that 
$$|\ln g_P(x) - \ln g_P(y)|\leq \Crr{lipp}d(x,y)^{\beta+\epsilon}$$
for every $x,y \in f^k(P)$, $P\in \mathcal{P}^k$, $k\in \mathbb{N}$.  In particular there is $\delta > 0$ such that if $d(x,y)< \delta$ then
$$\frac{1}{2}\leq   \frac{g_P(x)}{g_P(y)}\leq 2.$$
In particular if $Q\subset P$, $x \in h^{-1}_{P}(Q)$ and $diam \ h^{-1}_{P}(Q) < \delta$ then
$$\frac{1}{2}g_P(x) \leq \frac{|Q|}{|h^{-1}_{P}(Q)|}\leq 2 g_P(x).$$ 
Moreover  the Mean Value Theorem gives
\begin{equation} \label{estt} | g_P(x) -  g_P(y)|\leq 2g(x)\Crr{lipp}d(x,y)^{\beta+\epsilon}.\end{equation} 
Denote $\Crr{hold}(P,x)= 2\Crr{lipp}g(x).$ Reduce $\delta$ if necessary such that  $4\Crr{lipp}\delta^{\beta+\epsilon}< 1/2$. Then if $W\subset h^{-1}_{P}(Q)$  we have 
\begin{align*}
 &\Crr{hold}(P,x) (diam \ h^{-1}_P(Q) )^{\beta+\epsilon} + \Crr{hold}(P,x) |W|^{\beta + \epsilon}\\
 &\leq   4\Crr{lipp}g(x) \delta^{\beta+\epsilon}  \leq \frac{1}{2} g_P(x) \leq \frac{|Q|}{|h^{-1}(Q)|}.
 \end{align*}
 Choose $i_0\geq 2$ such that $diam \ R < \delta$ for every $R\in \mathcal{P}^{i_0}$. Define $\Lambda_j=\mathcal{P}^{j+i_0}$. Given $R\in \Lambda_j$, let $\tilde{h}_R$ be the restriction of $h_P$ to $f^j(R)$, where $P\in \mathcal{P}^{j}$ satisfies $R\subset P$. Note  that $\tilde{h}_R^{-1}(R)$ is a union of elements of  $\mathcal{P}^{i_0}$. In an analogous way, define $\tilde{g}_R$ as the restriction of $g_P$ to $f^j(R)$.  Then the Ruelle-Perron-Frobenious of $f^j$ can be written as
 $$(\Phi^j\psi)(x)=\sum_{R\in \Lambda_j}   \tilde{g}_R(x)\psi(\tilde{h}_R(x)).$$
By Proposition \ref{holderr}.B There is $\Crr{uuu}$, that depends only on the good grid $\mathcal{P}$ (in particular {\it it  does not depend on $j$}) with the following property. If  $W, Q\in \mathcal{P}$  are   such that $W \subset f^j(R)$, $Q \subset R\in \Lambda_j$ and $h_i(W)\subset Q$ then 
$$|\tilde{g}_R1_W|_{\mathcal{B}^\beta_{p,q}(W,\mathcal{P}_W,\mathcal{A}^{sz}_{p,q})}  
\leq 2\Crr{uuu} \frac{|Q|}{|\tilde{h}_R^{-1}(Q)|}  |W|^{1/p-\beta}.$$
Due the Bowen condition we have that 
$$|\tilde{g}_R1_W|_{\mathcal{B}^\beta_{p,q}(W,\mathcal{P}_W,\mathcal{A}^{sz}_{p,q})}  
\leq \Cll[name]{DRP}(R) \Big(\frac{|Q|}{|\tilde{h}_R^{-1}(Q)|}\Big)^{1/p-s+\epsilon}   |W|^{1/p-\beta},$$
where
$$ \Crr{DRP}(R) =  \Cll{auxxs} \Big(\frac{|R|}{|\tilde{h}_R^{-1}(R)|}\Big)^{1-(1/p-s+\epsilon)},$$
where $\Crr{auxxs}$ does not depend on $R\in \Lambda_j$ and $j$.

Of course $R\in \Lambda_j$ is a $(1-\beta p,1,j)$-strongly regular domain. Moreover for every $Q\in \mathcal{P}^k$, with $k\geq j$ we have that 
\begin{equation}\label{num}\# \{R\in \Lambda_j \colon R\cap Q\neq \emptyset    \}\leq 1.\end{equation}
Furthermore  for every $Q\in \mathcal{P}^k$ satisfying $Q\subset R \in \Lambda_j$ we have that $\tilde{h}_R^{-1}(Q)=f^j(Q)$ is an element of $\mathcal{P}^{k-j}$. In particular  we have that $\tilde{h}_R^{-1}(Q)$ is a $(1-sp,\Cll[name]{DGD1},\Cll[namec]{DGD2})$-regular domain, where $\Crr{DGD1}=1$ and {\it  $\Crr{DGD2} \in (0,1)$ can be chosen so close to $0$ as we want to.}

Finally note that if $a_R=j$ for $R\in \Lambda_j$  then
$$ |k_0(Q)-k_0(\tilde{h}^{-1}_R(Q))|=a_R.$$
and  (again due Bowen condition)  there is $\Cll[name]{DC1}$  such that taking 
$$\Cll[namec]{DC2}(R)=\Big(\frac{|R|}{|\tilde{h}^{-1}_R(R)|}\Big)^{1/j}$$
we have 
\begin{equation} \label{koebe} \frac{|Q|}{|h^{-1}_P(Q)|}  \leq \Crr{DC1}( \Crr{DC2}(R))^{ |k_0(Q)-k_0(\tilde{h}_R^{-1}(Q))|}.\end{equation}
for every $Q\in \mathcal{P}$ satisfying $Q\subset R$.  If we choose $\Crr{DGD2}$ small enough we obtain
\begin{equation}\label{fg} \Cll[namec]{DRS2}(R)=\max \{ (\Crr{DC2}(R))^{\epsilon},\Crr{DGD2}^{1/p}\}=(\Crr{DC2}(R))^{\epsilon}  < 1.\end{equation}
Denote $\Crr{DRS2}=\sup_{R\in \cup_j \Lambda_j} \Crr{DRS2}(R)$. Note that  $\Crr{DRS2}\leq (\inf w)^{-\epsilon} < 1$.
 Choose $\gamma_{_{DRS3}}\in (0,1)$. So
\begin{align*}
\Theta_R&=  \Crr{DC1}^{\epsilon} \Crr{DRP}(R)  \Crr{DGD1}^{1/p}  ( \Crr{DRS2}(R))^{a_R (1-\gamma_{_{DRS3}})} \\
&=\Crr{DC1}^{\epsilon}  \Crr{auxx} \Big(\frac{|R|}{|\tilde{h}_R^{-1}(R)|}\Big)^{1-(1/p-s+\epsilon)}  \Crr{DGD1}^{1/p}  \Big(\frac{|R|}{|\tilde{h}^{-1}_R(R)|}\Big)^{\epsilon(1-\gamma_{_{DRS3}})}\\
&= \Crr{DC1}^{\epsilon}  \Cll{auxxx}  \Big(\frac{|R|}{|\tilde{h}^{-1}_R(R)|}\Big)^{1-(1/p-s)-\epsilon\gamma_{_{DRS3}}}
\end{align*}
By Proposition 11.2, the Ruelle-Perron-Frobenious $\Phi^j$ has a $(\Cll[name]{GSR}\Cll{f2}(j),\Crr{GSR}\Cll{f1}(j),\gamma_{_{DRS3}})$-essential slicing, for some constant $\Crr{f2}(j)$ and 
$$\Crr{f1}(j)=  \big( \sum_{R \in \Lambda_j}  \Theta_R^{p'}   \big)^{1/p'} $$
Since there is $C > 0$ such that $|\tilde{h}^{-1}_R(R)|\geq C$ for every $R\in \Lambda_j$ we have that
$$\Crr{f1}(j)\leq   \Crr{DC1}^{\epsilon}  \Crr{auxxx}  \Cll{xx}  \big(\sum_{R\in \mathcal{P}^{j+i_0}}   |R|^{1+sp' -\epsilon\gamma_{_{DRS3}}p'}\big)^{1/p'}  $$ 
Corollary 10.1 in \cite{smania-transfer} tell us that we can write $\Phi^j= K_j+R_j$, whre $K_j$ is a finite-rank operator and 
\begin{align*} |R_j|&\leq \frac{2}{1-\Crr{DRS2}^{\gamma_{_{DRS3}}}}  \Cll[name]{GBS} \Crr{f1}(j) \Cll[name]{GC}\\
& \leq  \frac{2}{1-((\inf w)^{-\epsilon})^{\gamma_{_{DRS3}}}}  \Cll[name]{GBS} \Crr{f1}(j) \Cll[name]{GC}  \end{align*}
where $\Crr{GBS}, \Crr{GC}$ depends only on the good grid.  It follows that the essential spectral radius of $\Phi$ is at most 
$$\big( \liminf_{j\rightarrow \infty} \big( \sum_{P\in   \mathcal{P}^j}   |P|^{1+sp' -\epsilon\gamma_{_{DRS3}}p'}\big)^{1/j}\big)^{1/p'}.$$
and since $\gamma_{_{DRS3}}$ can be taken arbitrarily small we obtained the upper bound 
$$r_{ess}(\Phi)\leq \big( \liminf_{j\rightarrow \infty} \big( \sum_{P\in   \mathcal{P}^j}   |P|^{1+sp'}\big)^{1/j}\big)^{1/p'}.$$
for the essential spectral radius of $\Phi$. It is easy to see that $$r_{ess}(\Phi)\leq (\inf w)^{-s}< 1.$$ 
\end{proof}

\begin{corollary} Suppose that  every branch of $f$ is onto, that is, $f(P)=I$ for every $P\in \mathcal{P}^1$. Consider the transfer operator  defined by 
$$(\tilde{\Phi}\psi)(x)=\sum_{P\in \mathcal{P}^1}   g_P^{1+sp'}  \psi(h_P(x)).$$
Then $\tilde{\Phi}\colon L^\infty \rightarrow L^\infty$ is a bounded operator and
$$r_{ess}(\Phi,\mathcal{B}^s_{p,q})\leq (r(\tilde{\Phi},L^\infty))^{1/p'}.$$
in the case that $f$ is continuous in a topological space with a borelian measure $m$ we have 
$$r(\tilde{\Phi},L^\infty)=e^{P_{top}((1+sp')\log w)},$$
where $P_{top}$ denotes the topological pressure with respect to $f$.
\end{corollary}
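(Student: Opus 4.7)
The plan is to reduce the corollary to the general bound of Theorem \ref{marko} by showing that, under the ontoness assumption, the combinatorial sum $\sum_{P\in\mathcal{P}^j}|P|^{1+sp'}$ is comparable to $\|\tilde{\Phi}^j\|_{L^\infty\to L^\infty}$.

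First I would verify the boundedness of $\tilde{\Phi}$ on $L^\infty$. Since $\tilde{\Phi}$ is a positive operator and $\mathcal{P}^1$ is finite, $\tilde{\Phi}1=\sum_{P\in\mathcal{P}^1}g_P^{1+sp'}$ is a finite sum of bounded functions (each $g_P\leq 1/\inf w$ by condition E), so $\tilde{\Phi}1\in L^\infty$. Positivity then gives $|\tilde{\Phi}\psi|_\infty\leq|\tilde{\Phi}1|_\infty|\psi|_\infty$, and iterating, $\|\tilde{\Phi}^j\|_{L^\infty\to L^\infty}=|\tilde{\Phi}^j 1|_\infty$ for every $j$, so that $r(\tilde{\Phi},L^\infty)=\lim_j |\tilde{\Phi}^j 1|_\infty^{1/j}$.

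The heart of the argument is the identification
$$\sum_{P\in\mathcal{P}^j}|P|^{1+sp'}\asymp |\tilde{\Phi}^j 1|_\infty,$$
with constants independent of $j$. Indeed, since every branch of $f$ is onto, iterating yields $f^j(P)=I$ for every $P\in\mathcal{P}^j$, so
$$|P|=\int_{f^j(P)} g_P\,dm=\int_I g_P\,dm.$$
The Bowen condition E produces a distortion constant $\Crr{sd2}$, independent of $j$ and $P$, such that $g_P(x)/g_P(y)$ is bounded uniformly for $x,y\in I$; this is the key place where ontoness matters, since it guarantees bounded distortion over the \emph{whole} image. Consequently $|P|\asymp g_P(x)$ for every $x\in I$, and raising to the $(1+sp')$ power and summing gives $\sum_P |P|^{1+sp'}\asymp\sum_P g_P^{1+sp'}(x)=(\tilde{\Phi}^j 1)(x)$, from which the claimed comparability follows by taking the sup in $x$. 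Inserting this into the conclusion of Theorem \ref{marko} yields
$$r_{ess}(\Phi,\mathcal{B}^s_{p,q})\leq\Bigl(\lim_j \bigl(\sum_{P\in\mathcal{P}^j}|P|^{1+sp'}\bigr)^{1/j}\Bigr)^{1/p'}=r(\tilde{\Phi},L^\infty)^{1/p'}.$$

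For the final formula, in the continuous setting the Hölder regularity of $\log w$ supplied by condition F, together with the Bowen condition, places us squarely in the classical framework where the Bowen--Ruelle formula for expanding Markov maps identifies the exponential growth rate of $\sum_{f^j(y)=x}e^{S_j\phi(y)}$ (uniformly in $x$) with the topological pressure of $\phi$; see Bowen \cite{bowen} or Parry--Pollicott \cite{pp}. Applying this with the potential associated to $\tilde{\Phi}$ (so that, in the sign convention of the paper, $r(\tilde{\Phi},L^\infty)=e^{P_{top}((1+sp')\log w)}$) yields the second equality. I expect the main (minor) obstacle to be the uniformity of distortion constants when applying Bowen's condition simultaneously for all $j$ and all $P\in\mathcal{P}^j$; this is precisely what ontoness together with condition E buys us, so no new estimates are required beyond those already established for Theorem \ref{marko}.
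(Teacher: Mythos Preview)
Your proposal is correct and follows essentially the same route as the paper: both reduce to Theorem \ref{marko} via the comparison $\sum_{P\in\mathcal{P}^j}|P|^{1+sp'}\leq C\,|\tilde{\Phi}^j 1|_\infty$ obtained from the Bowen condition and ontoness. The paper's proof is terser (it records only the one-sided inequality, which suffices) and omits the positivity argument for $\|\tilde{\Phi}^j\|_{L^\infty}=|\tilde{\Phi}^j 1|_\infty$ and the pressure identification, both of which you spell out; but these are exactly the details one would fill in, so there is no substantive difference.
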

\begin{proof} The Bowen condition implies that 
$$\sum_{P\in   \mathcal{P}^j}   |P|^{1+sp'} \leq \Cll{axax} |\tilde{\Phi}^j 1_I|_\infty= \Crr{axax}|\tilde{\Phi}^j|_\infty.$$
for some $\Crr{axax}$ that does not depend on $j$, so we obtain
$$r_{ess}(\Phi,\mathcal{B}^s_{p,q})\leq (r(\tilde{\Phi},L^\infty))^{1/p'}.$$
\end{proof}

A problem with the above approach for Markovian maps is that $\mathcal{B}^s_{p,q}(I,m,\mathcal{P})$ a priori depends on the Markov partition $\mathcal{P}^1$ under consideration (and consequently depends on $f$). So the space $\mathcal{B}^s_{p,q}$  is a {\it ad hoc} space in this approach. On the other hand, we can use this approach with many situations we have a symbolic dynamics acting on subshift of finite type, as in full shifts,  expanding  maps on compact sets (in particular, expanding maps on compact manifolds) without discontinuities and Markov  expanding interval maps.

\section{Conformal expanding  repellers}

Let $I$ be a compact set in  Riemann sphere $\overline{\mathbb{C}}$ endowed with the spherical metric and  suppose that $f\colon I\rightarrow I$ is an (open) expanding repeller  such that $f$ has an conformal extension to a neighbourhood  of $I$ in $\overline{\mathbb{C}}$.  We call $f\colon I \rightarrow I$ a conformal expanding repeller (as in Przytycki and Urba\'{n}ski \cite{pu}). An important example is obtained taking $f$ as a hyperbolic rational map and $I$ as its Julia set.  Let $D$ be the haussdorff dimension of $I$. Let $m$  the $D$-dimensional Haussdorff measure restrict to $I$ and normalized such that $m(I)=1$. Then $|f'|^D$ is the Jacobian of $f$ with respect to $m$, that is, if $A \subset I$ is borellian set with small diameter then
$$m(f(A))=\int_A |f'|^D \ dm.$$
Moreover $m$  is  geometric measure, that is, it satisfies (\ref{geo}) if we take $d$ as the spherical metric. We could now consider a Markov partition for $(f,I)$ and use the methods there using a {\it ad hoc} space $\mathcal{B}^s_{p,q}$. We will use a new method here.  Since $(I,d,m)$ is an Alhfors-regular space (so in particular a homogeneous space), there is a  Besov space $\mathcal{B}^s_{p,q}$ that does not depend one the particular choice of a Markov partition.  Indeed, using Christ \cite{christ} one can construct a good grid $\mathcal{P}$ for $I$ with the following properties. There are constants  $\eta, \Cll{de111a}, \Cll{de1111a},\Cll{de00a},\Cll{ooa}\geq 0$ and $\Cll[c]{de000a}\in (0,1)$ such that for every $Q\in \mathcal{P}^k$, with $k\geq 1$, there is $z_Q\in Q$ satisfying 
\begin{equation}\label{pp} B_{d}(z_Q,\Crr{de111a} \Crr{de000a}^k)  \subset Q,\end{equation}
\begin{equation}\label{pp2} diam_d \  Q \leq \Crr{de1111a} \Crr{de000a}^k  \end{equation}
and
\begin{equation}\label{ppp}  m\{ x\in Q\colon \  d(x,I\setminus Q)\leq \Crr{de00a} t  \Crr{de000a}^k    \}\leq \Crr{ooa} t^{\eta} m(Q).\end{equation} 
See Proposition 2.1 in S. \cite{smania-homo} for details. If $sp < \eta$ then $\mathcal{B}^s_{p,q}(I,m,\mathcal{P})$ coincides with the corresponding Besov space $B^s_{p,q}(I,m,d)$ of the homogeneous space $(I,m,d)$ as defined by  Han, Lu and Yang \cite{han2}. From now one we assume $s \in (0,1)$, $p\geq 1$ and $sp < \min\{1,\eta\}.$ 

\begin{theorem} Let $f\colon I \rightarrow I $ be  a conformal repeller with dimension $D$. Then
$$r_{ess}(\Phi,B^s_{p,q})\leq \min |f'|^{-Ds} < 1.$$
Here $B^s_{p,q}$ is the Besov space  of the homogenous space  $(I,m,d)$.
\end{theorem}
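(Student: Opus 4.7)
The plan is to parallel the proof of Theorem \ref{marko} (the Markovian case), substituting the Markov partition by the Christ good grid $\mathcal{P}$ on $(I,d,m)$ and the Bowen condition by the Koebe distortion theorem. Proposition \ref{lip} will produce regular domains from bilipschitz inverse branches, Proposition \ref{holderr}.B will bound atomic Besov norms of Hölder (indeed real-analytic) Jacobians, and Proposition 11.2 together with Corollary 10.1 of \cite{smania-transfer} will package the resulting data into an essential-slicing estimate for $\Phi^j$.

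Concretely, I would fix a large integer $j$ and choose $k_0=k_0(j)$ so that for every $Q\in\mathcal{P}^k$ with $k\geq k_0$ and every connected component $R$ of $f^{-j}(Q)$, the map $f^j$ admits a univalent holomorphic inverse branch $h_R$ defined on a neighbourhood of $Q$ with $h_R(Q)=R$. By Koebe's distortion theorem, each $h_R$ is bilipschitz on $Q$ with ratio of constants bounded by a universal factor, independent of $j$, $k$, $Q$ and the branch. The grid elements $Q$ satisfy (\ref{pphe}) and (\ref{pphe2}) with $\hat\eta=\eta$ as a consequence of (\ref{pp})--(\ref{ppp}), so Proposition \ref{lip} shows that each $R$ is a $(1-sp,C_R,\Crr{de000a}^{\eta-Dsp})$-regular domain in $(I,m,\mathcal{P})$, with $C_R$ uniform in $R$ and $j$. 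The Jacobian of $h_R$ is $g_R=|h_R'|^D$, which by Koebe has $|\ln g_R(x)-\ln g_R(y)|\leq C\,d(x,y)$ on $Q$, hence is Hölder of exponent $D(s+\epsilon)\leq 1$ at the required scale. Proposition \ref{holderr}.B therefore gives $|g_R 1_W|_{\mathcal{B}^s_{p,q}}\leq C\,(|Q|/|h_R^{-1}(Q)|)\,|W|^{1/p-s}$ for each grid element $W\subset R$.

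With these ingredients I would proceed exactly as in the proof of Theorem \ref{marko}: feeding $\{(R,h_R,g_R)\}$ into Proposition 11.2 of \cite{smania-transfer} yields an essential slicing of $\Phi^j$, and Corollary 10.1 produces $\Phi^j=K_j+R_j$ with $K_j$ finite-rank and $|R_j|$ controlled by $\big(\sum_R (|Q_R|/|h_R^{-1}(Q_R)|)^{1+sp'-\epsilon\gamma p'}\big)^{1/p'}$. Since $|f'|^D$ is the Jacobian of $f$, Koebe gives $|Q_R|/|h_R^{-1}(Q_R)|\asymp |(f^j)'|^D$ on $R$, and the inner sum is then comparable to $\int |(f^j)'|^{D(s-\epsilon\gamma)}\,dm\leq C(\min|f'|)^{-jD(s-\epsilon\gamma)}$ using the $f$-invariance bookkeeping. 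Taking $j$-th roots and sending $\epsilon\gamma\to 0$ yields $r_{\mathrm{ess}}(\Phi)\leq (\min|f'|)^{-Ds}<1$.

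The main obstacle is the non-Markovian nature of the Christ grid: inverse branches do not send grid elements to grid elements, so each $R$ is merely a bilipschitz image of a $Q\in\mathcal{P}^k$ and its level $k_0(R)$ in $\mathcal{P}$ has to be tracked via the contraction factor of $h_R$, yielding the analogue of the distortion inequality (\ref{koebe}) straight from Koebe. One needs to verify that (a) the regularity constant $C_R$ and the Hölder constant for $\ln g_R$ are uniform in $j$, $k$ and in the branch, (b) the multiplicity with which branches contribute to any fixed grid element is bounded, and (c) the resulting error $|R_j|$ decays like a single geometric factor $(\min|f'|)^{-jDs}$ rather than accumulating polynomial growth in $j$ from branch multiplicity. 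All of these follow from Koebe distortion combined with Ahlfors regularity of $m$, but making the uniformity explicit is the substantive technical content of the argument.
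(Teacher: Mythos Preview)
Your overall plan---Koebe distortion for uniform bilipschitz control of inverse branches, Proposition~\ref{lip} to produce regular domains, Proposition~\ref{holderr}.B for the Jacobian norm, and then the essential-slicing machinery of \cite{smania-transfer}---is exactly the paper's toolkit. But two points need attention.

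First, your indexing is tangled. You define $h_R\colon Q\to R$, so the Jacobian $g_R=|h_R'|^D$ lives on $Q$, not on $R$; hence ``$g_R 1_W$ for $W\subset R$'' is ill-posed, and ``$|Q|/|h_R^{-1}(Q)|$'' is undefined since $Q$ is the domain of $h_R$. The paper avoids this by organising the partition on the \emph{preimage} side with grid elements: for almost every $x$ it chooses $P_{x,j}\in\cup_k\mathcal{P}^k$ containing $x$, small enough that $f^j$ admits a Koebe-controlled univalent inverse on a neighbourhood of $f^j(P_{x,j})$, yet large enough that $f^j(P_{x,j})$ contains an element of some fixed $\mathcal{P}^{\tilde k}$ (so $|f^j(R)|\geq\delta_2$). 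Then $\Lambda_j\subset\cup_k\mathcal{P}^k$ is a finite subfamily forming a partition of $I$, $h_R\colon f^j(R)\to R$, and one needs regularity of $h_R^{-1}(Q)=f^j(Q)$ for grid elements $Q\subset R$, obtained by writing $f^j$ locally as a bilipschitz map composed with a conformal linear map and arguing as in Proposition~\ref{lip}. Because each $R$ is itself a grid element, the strong-regularity of $R$ and the multiplicity bound $\#\{R\in\Lambda_j: R\cap Q\neq\emptyset\}\leq 1$ are automatic; in your scheme, where the $R$'s are merely bilipschitz images of grid elements, both require separate verification.

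Second, your endgame inequality is false as written. You claim the slicing constant is controlled by $\int |(f^j)'|^{D(s-\epsilon\gamma)}\,dm\leq C(\min|f'|)^{-jD(s-\epsilon\gamma)}$, but since $|(f^j)'|\geq(\min|f'|)^j>1$ and $D(s-\epsilon\gamma)>0$, the integrand is $\geq(\min|f'|)^{jD(s-\epsilon\gamma)}$ and the integral diverges rather than decays. The paper's final step is elementary and does not pass through an integral: since $\Lambda_j$ is a partition of $I$,
\[
\sum_{R\in\Lambda_j}|R|^{1+sp'}\leq\Big(\max_{R\in\Lambda_j}|R|\Big)^{sp'}\sum_{R\in\Lambda_j}|R|=\Big(\max_{R\in\Lambda_j}|R|\Big)^{sp'},
\]
and $\max_R|R|\leq\delta_2^{-1}\max_R|R|/|f^j(R)|\leq C(\min|f'|)^{-jD}$. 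Taking the $1/(jp')$-th root and letting $\epsilon\gamma\to0$ yields $(\min|f'|)^{-Ds}$. The partition identity $\sum_R|R|=1$, not any ``$f$-invariance bookkeeping'', is what makes the sum collapse.
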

\begin{proof}
Since $f$ is expanding and conformal, there is $\delta_0 > 0$ and $\alpha > 1$  with the following property. For every $x\in I$ we can find a domain $V_x$ such that  
$$f\colon  V_x \rightarrow B_{\overline{\mathbb{C}}}(f(x),2\delta_0)$$
is  conformal and its inverse $h_x$ is a contraction, that  is
$$d_{\overline{\mathbb{C}}}(h_x(z),h_x(y))\leq \frac{1}{\alpha} d_{\overline{\mathbb{C}}}(z,y)$$
for every $x,y\ in B_{\overline{\mathbb{C}}}(f(x),2\delta_0).$ In particular 
$$V_x \subset B_{\overline{\mathbb{C}}}(x,2\delta_0)$$
and we can define the inverse branches of $f^j$
$$h_{j,x}\colon  B_{\overline{\mathbb{C}}}(f^j(x),2\delta_0)\rightarrow  \overline{\mathbb{C}}$$
as
$$h_{j,x}(y)=   h_x\circ \cdots \circ      h_{f^{j-2}(x)} \circ h_{f^{j-1}(x)}(y).$$
Define
$$g_{j,x}(y)= |h_{j,x}'(y)|^D.$$
Using Koebe Lemma  one can prove there is $\Cll{lippp}$ such that for every $x\in I$ and $j$
 $$| g_{x,j}(z) -  g_{x,j}(y)|\leq 2g(y)\Crr{lippp}d(z,y)^{D(\beta+\epsilon)}$$
 for every $y,z\in  B_{\overline{\mathbb{C}}}(f^j(x),\delta_0)$. Using arguments quite similar to those in Section \ref{markov} we can show that  there is $\delta_1 < \delta_0$ such that If  $W, Q\in \mathcal{P}$  satisfy  $Q \subset h_{x,j} B_{\overline{\mathbb{C}}}(f^j(x),\delta_0)$,  $diam \ h_{x,i}^{-1}(Q) < \delta_1$ and   $W\subset h_{x,j}^{-1}Q$ then 
$$|g_{x,j}1_W|_{\mathcal{B}^\beta_{p,q}(W,\mathcal{P}_W,\mathcal{A}^{sz}_{p,q})}  
\leq 2\Cll{uuuu} \frac{|Q|}{|h_{x,j}^{-1}(Q)|}  |W|^{1/p-\beta}.$$
Note that (use Koebe again) there   is $\tilde{k}$ with the following property.  For every $j$ and  $m$-almost every $x \in I$ there is $P_{x,j}\in \cup_k \mathcal{P}^k$ such that $P_{x,j}\subset h_{x,j}(B_{\overline{\mathbb{C}}}(f^j(x),\delta_1/2)$, 
$x\in P_{x,j}$ and $f^j(P_{x,j})$ contains at least one element of $\mathcal{P}^{\tilde{k}}$. In particular there is $\delta_2 > 0$ such that $m(f^j(P_{x,j}))\geq \delta_2$ for every $x,j$. Since $\mathcal{P}$ is a nested sequence of partitions of $I$ one can find a finite family $\Lambda_j \subset \{ P_{x,j}\}_{x\in I}$ that is a partition of $I$. 
For every $R\in \Lambda_j$ denote by $h_R$ and $g_R$ the restrictions of $h_{x,i}$ and $g_{x,i}$ to $R$. 
So if $Q \subset R$ then 
$$|g_R1_W|_{\mathcal{B}^\beta_{p,q}(W,\mathcal{P}_W,\mathcal{A}^{sz}_{p,q})}  
\leq \Crr{DRP}(R) \Big(\frac{|Q|}{|h_R^{-1}(Q)|}\Big)^{1/p-s+\epsilon}   |W|^{1/p-\beta},$$
where
$$ \Crr{DRP}(R) =  \Cll{auxx} \Big(\frac{|R|}{|h_R^{-1}(R)|}\Big)^{1-(1/p-s+\epsilon)},$$
 Note that this map $h_R$  has bounded distortion in the sense that for every $Q\subset P$, with $Q\in \mathcal{P}$ we have that $f^j(Q)$ is an $(1-sp,\Crr{DGD1},\Crr{DGD2})$-regular domain, where $\Crr{DGD1},\Crr{DGD2}$ does not depend on $j$. Indeed  since $f^j$ is conformal, so using Koebe Lemma one can show that in every small open sets close to $I$ the map $f^j$ can be written as $f^j(x)=h( \alpha e^{i\theta} x)$, where $h$ is a bilipchitz function that satisfies (\ref{lip2}), and $\Crr{lm1}$ does not depend on $j$.  Then we can use an argument similar to Proposition \ref{lip}, noticing that the $D$-dimensional Haussdorff measure behaves quite well under the action of scalings and rotations.

We can see the Ruelle-Perron-Frobenious operator of $f^j$ as 
$$(\Phi^j\psi)(x)= \sum_{R\in \Lambda_j}   g_R(x)\psi(h_R(x)).$$
Let $k_{j}=\max\{ k_0(R)\colon \ R\in \Lambda_j\}$. Then  for every $Q\in \mathcal{P}^k$, with $k\geq k_j$ we have that 
\begin{equation}\label{num2} \# \{R\in \Lambda_j \colon R\cap Q\neq \emptyset    \}\leq 1.\end{equation}
 Moreover 
$$\Cll{df}  \Crr{de000a}^{D |k_0(Q)-k_0(h_R^{-1}(Q))|}\leq \frac{|Q|}{|h^{-1}_R(Q)|}  \leq \Crr{DC1}\Crr{de000a}^{ D|k_0(Q)-k_0(h_R^{-1}(Q))|}$$
for  appropriated constants  $\Crr{df},\Crr{DC1}$.  Indeed due the  bounded distortion of $h_R$ we have
$$\Cll{dff}  \Crr{de000a}^{D |k_0(Q)-k_0(h_R^{-1}(Q))|}\leq \frac{|R|}{|h^{-1}_R(R)|}  \leq \Cll{dg}\Crr{de000a}^{D |k_0(Q)-k_0(h_R^{-1}(Q))|}$$
and consequently 
$$ |k_0(Q)-k_0(h_R^{-1}(Q))|\geq  \frac{1}{D\ln \Crr{de000a}} \ln \frac{|R|}{|h^{-1}_R(R)|} -\frac{\ln \Crr{dff}}{D\ln \Crr{de000a}}, $$
so define
$$a_R= \frac{1}{D\ln \Crr{de000a}} \ln \frac{|R|}{|h^{-1}_R(R)|} -\frac{\ln \Crr{dff}}{D\ln \Crr{de000a}}.$$
Take $\Crr{DC2}(R)=\Crr{de000a}^D$. Now we cannot choose   $\Crr{DGD2}$  to be close to zero anymore. But we can choose $\epsilon $ small enough such that 
$$\Crr{DRS2}(R)=\max \{ \Crr{de000a}^{D\epsilon},\Crr{DGD2}^{1/p}\}=\Crr{de000a}^{D\epsilon}  < 1$$
Then the $\Theta_R$ has expression identical to the expression in Section \ref{markov}, so  we can use the same arguments  to conclude that
$$r_{ess}(\Phi)\leq \big( \liminf_{j\rightarrow \infty} \big( \sum_{P\in   \Lambda_j }   |P|^{1+sp'}\big)^{1/j}\big)^{1/p'}.$$
The nature of $\Lambda_j$ is more mysterious here, however  $\Lambda_j$ is a partition of $I$, so we can yet obtain the estimate 
\begin{align*}
&\big( \sum_{P\in   \Lambda_j }   |P|^{1+sp'}\big)^{1/j}\\
&\leq    \big( \sum_{P\in   \Lambda_j }   |P|\big)^{1/j} ( \max_{P\in \Lambda_j}  |P|)^{sp'/j}\\
&\leq  ( \max_{P\in \Lambda_j}  |P|)^{sp'/j}\\
&\leq \delta^{sp'/j}  \big( \max_{P\in \Lambda_j}  \frac{|P|}{|f^j(P)|}\big)^{sp'/j}\\
&\leq \delta^{sp'/j} \min |f'|^{-Dsp'}.
\end{align*} 
so $r_{ess}(\Phi,\mathcal{B}^s_{p,q})\leq \min |f'|^{-Ds} < 1.$
\end{proof}

\section{Piecewise Bi-Lipchitz maps on the interval with  $1/(\beta+\epsilon)$-bounded  \\variation potentials}
\label{bvint}
That is our first class examples of maps that do not have a Markov partition. The study of ergodic theory of piecewise monotone maps on the interval is quite long.   Lasota and Yorke \cite{ly} studied  $C^2$ expanding maps on the interval. Keller and Hofbauer  \cite{hk1}\cite{hk2} considered maps with bounded variation jacobian.  Keller \cite{keller}  studied  of transfer operators with $p$-bounded variation jacobians, which  includes  piecewise $C^{1+}$-diffeomorphisms. In particular he obtained statistical properties for certain bounded observables.

Consider a map
$$f\colon \cup_{i\in \Lambda} I_i \rightarrow I$$
where $I=[0,1]$ and $I_i$  are intervals  and $\Lambda$ is finite, where $\Lambda_1=\{I_i\}$ is a partition of $I$. We assume that $f\colon I_i \rightarrow I$ is continuous and  there is $\alpha,\beta  > 0$ satisfying 
for every $i \in \Lambda$ and $x,y \in I_i$
$$ \beta |x-y| \geq |f(x)-f(y)|\geq \alpha |x-y|.$$
Suppose that $w\colon \cup_i \rightarrow I$ is a $1/(\beta+\epsilon)$-bounded variation  function such that $\inf w > 0$.  Note that $w$ also have finite $1/(\beta+\epsilon')$-bounded variation for every $\epsilon' < \epsilon$, and sometimes it will be useful to reduce $\epsilon$.  Of course 
$f^j$ is also a piecewise expanding map with a corresponding dynamical partition of $I$ by intervals  $\Lambda_j$.

As usual denote by $h_J$ the inverse of $f^j\colon J \rightarrow f^j(J)$ and the induced potential by $g_J$, that has $1/(\beta+\epsilon)$-bounded variation on $J$. Let $\mathcal{D}$ be the dyadic good grid of $I$. Denote

To simplify the notation, denote
$$m(J)=\inf_{\substack{Q\subset J\\ Q\in \mathcal{D}}} \frac{|Q|}{|h_R^{-1}(Q)|}, \ M(J)=\sup_{\substack{Q\subset J\\ Q\in \mathcal{D}}} \frac{|Q|}{|h_R^{-1}(Q)|}.$$
Note that $m(J)$ and $M(J)$ are finite since the branches of $f^j$ are bi-Lipchitzian maps. 

By Proposition \ref{bvp}.A, we can replace  the partition $\Lambda_j$ by a finer finite partition $\Lambda_j$ such that 
for every $W \in \mathcal{D}$ and $W\subset f^j(J)$, with $J\in \Lambda_j$, we have 
\begin{equation}\label{bou} |g_J1_W|_{\mathcal{B}^\beta_{p,q}(W,\mathcal{P}_W,\mathcal{A}^{sz}_{p,q})}  \leq  \Crr{uuug} (\sup_W g_J)   |W|^{1/p-\beta}.\end{equation}
so
$$|g_J1_W|_{\mathcal{B}^\beta_{p,q}(W,\mathcal{P}_W,\mathcal{A}^{sz}_{p,q})}  
\leq \Crr{DRP}(J) \Big(\frac{|Q|}{|h_J^{-1}(Q)|}\Big)^{1/p-s+\epsilon}   |W|^{1/p-\beta},$$
where
$$ \Crr{DRP}(J) = \Crr{uuug}  (\sup_J g_J)  m(J)^{-(1/p-s+\epsilon)},$$
Note that  branches of $f^j$ takes intervals to intervals, and every interval in $I$ is a $(1-sp,\Crr{DGD1},\Crr{DGD2})$-regular domain  and $(1-\beta p,\Cll{srd},0)$-strongly regular domain, where the constants does not depend on the interval.  
For every  interval $A\subset I$  we have 
$$2^{-k_0(A)} \leq  |A|\leq 2^{-(k_0(A)-2)}.$$
Let   $J\in \tilde{\Lambda}_j$ and consider an interval $Q\subset J$. If $j$ is large enough we have  $k_0(h^{-1}_J(Q))< k_0(Q)$  and consequently 
 \begin{align}  \label{jjjj} \frac{1}{4} \Big(  \frac{1}{2} \Big)^{ |k_0(Q)-k_0(h_P^{-1}(Q))|}   &\leq \frac{|Q|}{|h^{-1}_J(Q)|} \nonumber  \\ &\leq   \frac{2^{-(k_0(Q)-2)}}{2^{-k_0(h^{-1}_J(Q))} } \leq 4 \Big(  \frac{1}{2} \Big)^{ |k_0(Q)-k_0(h_P^{-1}(Q))|}.\end{align}
so we can take $ \Crr{DC1}=4$ and $\Crr{DC2}(P)=1/2$. Moreover we can choose
\begin{equation}\label{max} |k_0(Q)-k_0(h_P^{-1}(Q))| \geq a_J = \frac{\ln M(J)}{\ln 2} -2  \geq   j \frac{ \ln \alpha}{\ln 2} -2     .\end{equation} 

We can take $\epsilon$ small enough such that 
$$\Crr{DRS2}(J)=\max \{ (1/2)^{\epsilon},\Crr{DGD2}^{1/p}\}=(1/2)^{\epsilon}  < 1$$
So in this case
\begin{align*}
\Theta_J&=  \Crr{DC1}^{\epsilon} \Crr{DRP}(J)  \Crr{DGD1}^{1/p}  (1/2)^{\epsilon a_J (1-\gamma_{_{DRS3}})} \\
&= \Crr{DC1}^{\epsilon}  \Cll{auxxxx} (\sup_W g_J)  m(J)^{-(1/p-s+\epsilon)} M(J)^{\epsilon(1-\gamma_{_{DRS3}})}\end{align*}
  Moreover there is $k_j$ such that 
 for every $Q\in \mathcal{P}^k$, with $k\geq k_j$ we have that 
\begin{equation}\label{num22} \# \{J\in \Lambda_j \colon J\cap Q\neq \emptyset    \}\leq 2.\end{equation}
By Proposition 11.2 in \cite{smania-transfer}, the Ruelle-Perron-Frobenious $\Phi^j$ of $f^j$  has a $$(\Crr{GSR}\Cll{f22}(j),\Crr{GSR}\Cll{f11}(j),\gamma_{_{DRS3}})$$ essential slicing, for some constant $\Crr{f22}(j)$ and 
$$\Crr{f11}(j)= 2 \Crr{srd}^{1/p}\big( \sum_{J \in \Lambda_j}  \Theta_J^{p'}   \big)^{1/p'} $$
Corollary 10.1 in \cite{smania-transfer} tell us that we can write $\Phi^j= K_j+F_j$, whre $F_j$ is a finite-rank bounded operator in $\mathcal{B}^s_{p,q}$ and 
\begin{align*} |K_j|&\leq \frac{2}{1-(1/2)^{\epsilon \gamma_{_{DRS3}}}}  \Cll[name]{GBS} \Crr{f1}(j) \Cll[name]{GC}. \end{align*}

In particular $\Phi^j$ is a {\it bounded } operator acting on $\mathcal{B}^s_{p,q}$, for every $s< \beta$, $\beta < 1/p$ and $q \in [1,\infty)$. To study the  quasi-compactness of $\Phi^j$  is trickier, since we do not have any control over the quantities $m(J), M(J)$ under the current assumption. So lets assume additionally that $f'$ is also a $1/(\beta+\epsilon)$-bounded variation  function. Then  $h_J'$  is also an $1/(\beta+\epsilon)$-bounded variation and  if necessary we  can refine $\Lambda_j$ such that 
\begin{equation}\label{bou2}  \frac{1}{2}  \leq   \frac{|h_J'(x)|}{|h_J'(y)|}\leq 2\end{equation}
for every $x,y \in h_J(J)$ and $J\in \Lambda_J$. This implies that there is $\Cll{ddd}$ such that
$$  \frac{1}{\Crr{ddd}} \frac{|J|}{|h_J^{-1}(J)|}  \leq     m(J)\leq M(J)\leq \Crr{ddd}\frac{|J|}{|h_J^{-1}(J)|} .$$
and we get the more friendly estimate
$$\Theta_J\leq   \Cll{auxxxxx} (\sup_J g_J)  \Big( \frac{|J|}{|h_J^{-1}(J)|}\Big)^{s-1/p-\epsilon \gamma_{_{DRS3}}}.$$
where  $\Crr{auxxxxx}$ does not depend on $j$. So to get $|K_j|< 1$ on $\mathcal{B}^s_{p,q}$ we need 
$$ 2 \Crr{srd}^{1/p}  \big( \sum_{J \in \Lambda_j}  \Theta_J^{p'}   \big)^{1/p'}  < 1. $$
for the case $p > 1$ and 
$$ 2 \Crr{srd}^{1/p} \sup_{J \in \Lambda_j}  \Theta_J < 1. $$
for the case $p=1$.

\begin{theorem} Consider  $w=|f'|$,  that is, $g_J=|h_J'|$, and $\inf |f'|> 1$.  Then 
$$r_{ess}(\Phi,\mathcal{B}^s_{1,q})\leq (\inf |f'|)^s < 1.$$ We can also have  the quasi-compactness of $\Phi$ on $\mathcal{B}^s_{p,q}$, provided $p$ is close to $1$. 
\end{theorem}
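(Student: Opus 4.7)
The plan is to specialize the general estimate for $\Theta_J$ derived just above the theorem to the choice $g_J=|h_J'|$, use the bounded distortion refinement of $\Lambda_j$ to bound $\sup_J g_J$ by $|J|/|h_J^{-1}(J)|$, and exploit the uniform expansion $\inf|f'|>1$ to produce geometric decay in $j$.

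Since $g_J=|h_J'|$ and $|h_J'(x)|/|h_J'(y)|\le 2$ on $h_J^{-1}(J)$ after refining $\Lambda_j$, the Mean Value Theorem yields $\sup_J g_J\le 2\,|J|/|h_J^{-1}(J)|$. Inserted into the $\Theta_J$ expression displayed immediately before the theorem, this gives the friendly form
\[
\Theta_J\;\le\;C\,\Big(\tfrac{|J|}{|h_J^{-1}(J)|}\Big)^{1+s-1/p-\epsilon\gamma_{_{DRS3}}}
\]
with $C$ independent of $j$ and $J$. For $p=1$ the exponent collapses to $s-\epsilon\gamma_{_{DRS3}}$, which I arrange to be positive by first choosing $\epsilon$ and $\gamma_{_{DRS3}}$ small. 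The uniform expansion furnishes $|J|/|h_J^{-1}(J)|\le (\inf|f'|)^{-j}$ for every $J\in\Lambda_j$, hence
\[
\sup_{J\in\Lambda_j}\Theta_J\;\le\;C\,(\inf|f'|)^{-j(s-\epsilon\gamma_{_{DRS3}})}.
\]
For $j$ large this forces $2\Crr{srd}\sup_J\Theta_J<1$, and Corollary~10.1 of \cite{smania-transfer} splits $\Phi^j=K_j+F_j$ with $F_j$ finite rank and $|K_j|$ controlled by the same expression up to a $j$-independent constant. Since $r_{ess}(\Phi^j)=r_{ess}(\Phi)^j$ and $r_{ess}(K_j)\le|K_j|$, taking $j$-th roots and then letting $\epsilon,\gamma_{_{DRS3}}\to 0$ delivers the stated upper bound $(\inf|f'|)^{-s}$ on $r_{ess}(\Phi,\mathcal{B}^s_{1,q})$.

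For $p$ slightly larger than $1$ I would not try to sharpen the exponent but only transfer the $p=1$ inequality to the $\ell^{p'}$-version through the crude bound
\[
\Big(\sum_{J\in\Lambda_j}\Theta_J^{p'}\Big)^{1/p'}\;\le\;(\#\Lambda_j)^{1/p'}\,\sup_J\Theta_J.
\]
Fix $j$ so that the $p=1$ inequality $2\Crr{srd}\sup_J\Theta_J<1$ holds with room to spare; since $\Lambda_j$ is a finite partition and $(\#\Lambda_j)^{1/p'}\to 1$ as $p\to 1^+$, the strict inequality $2\Crr{srd}^{1/p}(\sum_J\Theta_J^{p'})^{1/p'}<1$ survives on a small right-neighbourhood of $p=1$, and Corollary~10.1 again supplies a quasi-compact decomposition on $\mathcal{B}^s_{p,q}$. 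The main obstacle is that this crude argument couples the admissible range of $p$ to $j$, so it does not produce a $p$-uniform bound on the essential spectral radius; obtaining such a bound would require a direct summation of $\Theta_J^{p'}$ over $\Lambda_j$ exploiting the distortion-controlled geometry, which is the genuine technical point one would have to push.
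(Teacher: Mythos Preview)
Your proof is correct and follows essentially the same route as the paper: bound $\sup g_J$ via the distortion estimate, specialize the general $\Theta_J$ expression to $p=1$, use $|J|/|h_J^{-1}(J)|\le(\inf|f'|)^{-j}$, take $j$-th roots, then send $\epsilon,\gamma_{_{DRS3}}\to 0$. Your treatment of the $p>1$ case via the crude bound $(\sum_J\Theta_J^{p'})^{1/p'}\le(\#\Lambda_j)^{1/p'}\sup_J\Theta_J$ and the observation $(\#\Lambda_j)^{1/p'}\to 1$ as $p\to 1^+$ is more explicit than the paper, which simply asserts the claim without details; your caveat that this only gives quasi-compactness for $p$ in a $j$-dependent neighbourhood of $1$ (rather than a uniform essential-radius bound) is accurate and is exactly the limitation of the paper's statement as well.
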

\begin{proof} We have 
$$\sup_J g_J\leq 2 \frac{|J|}{|h_J^{-1}(J)|},$$
and we get  
$$\Theta_J\leq   2 \Crr{auxxxxx}  \Big( \frac{|J|}{|h_J^{-1}(J)|}\Big)^{1-1/p+s-\epsilon \gamma_{_{DRS3}}}.$$
so to get quasicompactness of $\Phi$ on $\mathcal{B}^s_{1,q}$ we need 
$$|K_j|\leq   4 \Crr{srd}  \Crr{auxxxxx} \sup_{J\in \Lambda_j}  \Big( \frac{|J|}{|h_J^{-1}(J)|}\Big)^{s-\epsilon \gamma_{_{DRS3}}}< 1. $$
Note that the constants $ \Crr{srd}, \Crr{auxxxxx}$ may depend on $\epsilon$.  Since  $f$ is expanding ($\inf |f'|> 1$) then
$$|K_j|\leq   4 \Crr{srd}  \Crr{auxxxxx} (\inf |f'|)^{j(s-\epsilon \gamma_{_{DRS3}})}< 1. $$
and we get
$$r_{ess}(\Phi,\mathcal{B}^s_{1,q})\leq \liminf_{j\rightarrow \infty} (4 \Crr{srd} \Crr{auxxxxx} (\inf f')^{j(s-\epsilon \gamma_{_{DRS3}})})^{1/j} = (\inf |f'|)^{s-\epsilon \gamma_{_{DRS3}}}.$$
Taking $\epsilon \rightarrow 0$ we obtain $r_{ess}(\Phi,\mathcal{B}^s_{1,q})\leq (\inf |f'|)^s < 1$. We can also obtain the quasi-compactness of $\Phi$ on $\mathcal{B}^s_{p,q}$, provided $p$ is close to $1$. 
\end{proof}

\section{Continuous $C^{1+\beta+\epsilon}$-piecewise expanding maps on the interval}

 Here we show that 

\begin{theorem} In the setting of the Section \ref{bvint} we may consider the case when $f$ is continuous and every branch of $f$ is a $C^{1+\beta+\epsilon}$-diffeomorfism that is expanding ($\alpha > 1$) and with $w=|f'|$.  Then
$$r_{ess}(\Phi,\mathcal{B}^s_{p,q}) \leq  e^{h_{top}(f)/p'}  (\inf |f'|)^{-(1/p'+s)},$$
where $h_{top}(f)$ denotes the topological entropy of $f$.
In particular $\Phi$ acts as a quasi-compact operator on $\mathcal{B}^s_{p,q}$  for $p\sim 1$.
\end{theorem}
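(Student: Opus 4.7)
The plan is to reuse the $\Theta_J$-machinery developed in Section \ref{bvint} applied in the special case $w=|f'|$, and to combine the resulting bound with the Misiurewicz--Szlenk formula for the topological entropy of a continuous piecewise monotone interval map. First, since every branch of $f$ is a $C^{1+\beta+\epsilon}$-diffeomorphism, $f'$ is $(\beta+\epsilon)$-H\"older on each branch and therefore has finite $1/(\beta+\epsilon)$-variation. The hypotheses of Section \ref{bvint} are thus satisfied; after a finite refinement of $\Lambda_j$ we may also assume the distortion bound (\ref{bou2}), which gives $\sup_J g_J \leq 2|J|/|f^j(J)|$. Specializing the $\Theta_J$ estimate of that section to our setting and recalling $1-1/p=1/p'$, we obtain a constant $C_0$, independent of $j$, such that
$$\Theta_J \leq C_0 \Big(\frac{|J|}{|f^j(J)|}\Big)^{1/p'+s-\epsilon \gamma_{_{DRS3}}}.$$

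Next, I would exploit the crude pointwise bound $|J|/|f^j(J)|\leq (\inf|f'|)^{-j}$, which follows from $|(f^j)'|\geq (\inf|f'|)^j$ on every $J\in\Lambda_j$. Raising the estimate on $\Theta_J$ to the $p'$-th power and summing yields
$$\sum_{J\in\Lambda_j}\Theta_J^{p'}\leq C_0^{p'}\,\#\Lambda_j\,(\inf|f'|)^{-j(1+sp'-\epsilon \gamma_{_{DRS3}} p')}.$$
Feeding this into the expression for the norm of the non-compact part $|K_j|$ recorded in Section \ref{bvint}, extracting a $j$-th root, and using $r_{ess}(\Phi)\leq\liminf_j |K_j|^{1/j}$, I arrive at
$$r_{ess}(\Phi,\mathcal{B}^s_{p,q})\leq \Big(\liminf_j \#\Lambda_j^{1/j}\Big)^{1/p'}\,(\inf|f'|)^{-(1/p'+s-\epsilon \gamma_{_{DRS3}})}.$$
Since $f$ is continuous and each branch is a homeomorphism, $f$ is a continuous piecewise monotone interval map and $\#\Lambda_j$ is exactly the lap number $\ell(f^j)$. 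The Misiurewicz--Szlenk theorem then gives $\lim_j (\log\ell(f^j))/j=h_{top}(f)$, and letting $\epsilon \gamma_{_{DRS3}}\to 0$ recovers the claimed bound $e^{h_{top}(f)/p'}(\inf|f'|)^{-(1/p'+s)}$.

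For the quasi-compactness assertion, observe that as $p\to 1$ we have $1/p'\to 0$, so the right-hand side converges to $(\inf|f'|)^{-s}<1$; by continuity in $p$, quasi-compactness of $\Phi$ on $\mathcal{B}^s_{p,q}$ therefore holds for all $p$ sufficiently close to $1$. The main obstacle, and the only point that genuinely goes beyond Section \ref{bvint}, is the identification $\lim_j (\log\#\Lambda_j)/j=h_{top}(f)$: this is the step that essentially uses continuity of $f$ rather than mere piecewise regularity, and the Misiurewicz--Szlenk theorem is what makes the exponential growth rate of laps exactly equal to the topological entropy.
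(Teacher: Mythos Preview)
Your overall strategy is correct and matches the paper's, but you have glossed over the one technical point that the paper identifies as the heart of the argument. You write ``after a finite refinement of $\Lambda_j$ we may also assume the distortion bound (\ref{bou2})'' and then, a few lines later, assert that ``$\#\Lambda_j$ is exactly the lap number $\ell(f^j)$''. These two statements are in tension: once you refine, $\Lambda_j$ is no longer the partition into monotonicity intervals, and the Misiurewicz--Szlenk formula applies only to the latter. The issue is not merely notational. In the general $1/(\beta+\epsilon)$-variation setting of Section~\ref{bvint}, the refinement needed to enforce (\ref{bou}) and (\ref{bou2}) is obtained for each $j$ separately, and the number of additional cuts can in principle grow exponentially with $j$ (since the variation and distortion of the $j$-th iterate's branches may deteriorate). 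If that happened, your bound $\sum_J\Theta_J^{p'}\le C_0^{p'}\,\#\Lambda_j\,(\inf|f'|)^{-j(\cdots)}$ would give nothing useful.

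The paper's proof fills exactly this gap: it uses the $C^{1+\beta+\epsilon}$ hypothesis to run the standard bounded-distortion argument (as in Section~\ref{markov}) and obtain a single $\delta>0$, independent of $j$, such that (\ref{bou}) and (\ref{bou2}) hold on any piece whose $f^j$-image has diameter less than $\delta$. The refinement then consists of cutting each lap of $f^j$ into at most $1+1/\delta$ subintervals, so $\#\tilde{\Lambda}_j\le (1+1/\delta)\,\#\Lambda_j$ and the exponential growth rate is unchanged. You correctly flag Misiurewicz--Szlenk as the place where continuity enters, but you miss that the $C^{1+\beta+\epsilon}$ regularity (not just H\"older $\Rightarrow$ bounded variation) is what makes the refinement uniformly controlled. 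Add this bounded-distortion step and your argument is complete and essentially identical to the paper's.
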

\begin{proof} In Section  \ref{bvint} we started defining $\Lambda_j$ as the partition given by the intervals of monotonicity  of  $f^j$. It is well know that the rate of the grow of the cardinality of $\Lambda_j$ is related with the topological entropy of $f$. Indeed
\begin{equation}\label{jjj} \lim \frac{1}{j} \ln \#\Lambda_j = h_{top}(f).\end{equation} 

But in Section  \ref{bvint} we needed to replace $\Lambda_j$ by a refinement of it. This makes harder to understand the growth of $\#\Lambda_j$. But in the case of continuous $C^{1+\beta+\epsilon}$-piecewise expanding case, we may also need to do a similar procedure, but in a more orderly fashion. Indeed, using the same bounded distortion arguments we used in Section \ref{markov} one can prove that there is $\delta > 0$ such that  for every $j$ and every maximal monotone inverse branch $h_J$ with $J\in \Lambda_j$  we have that  (\ref{bou}) holds  for every $W \in \mathcal{D}$ such that $W\subset f^j(J)$ and $diam \ W < \delta$ and (\ref{bou2}) holds for every $x,y\in f^j(J)$ such that $|x-y|< \delta$. So to obtain  the appropriated  refinement $\tilde{\Lambda}_j$ such that (\ref{bou}) holds $W\subset f^j(J)$   we just need to cut every  interval of $\Lambda_j$ in subintervals in such way that the image of each subinterval by $f^j$ has diameter smaller than $\delta$. This can be made in such way that the number of elements in the new partition has at most the number of elements in the original partition times $1+1/\delta$.   In particular (\ref{jjj}) remains true. Using the results of Section \ref{bvint} one can show that
\begin{align*} r_{ess}(\Phi,\mathcal{B}^s_{p,q}) &\leq \big( \liminf_{j\rightarrow \infty} \big(  \sum_{J\in \Lambda_j}    (\inf_J |(f^j)'|)^{-(1+sp')}\big)^{1/j}  \big)^{1/p'}\\
 &\leq \big( \liminf_{j\rightarrow \infty}    (\#\Lambda_j)^{1/j}  (\inf |f'|)^{-(1+sp')}   \big)^{1/p'}\\
 &\leq e^{h_{top}(f)/p'}  (\inf |f'|)^{-(1/p'+s)}.
 \end{align*}
 \end{proof}
 
\begin{corollary} Consider the tent family $f_t\colon [-1,1]\rightarrow [-1,1]$, with $1/2< t\leq 1$, given by $f_t(x)=-2t|x|+2t-1$. We have $exp(h_{top}(f_t))=2t=|f_t'|.$ So
$$r_{ess}(\Phi_{f_t},\mathcal{B}^s_{p,q}) \leq (2t)^{-s} < 1.$$
for every $s,p,q$ satisfying $0< s< 1/p$, $p\in [1,\infty)$ and $q\in [1,\infty]$. 
\end{corollary}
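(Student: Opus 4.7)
The plan is a direct application of the preceding theorem on continuous $C^{1+\beta+\epsilon}$-piecewise expanding maps on the interval; the whole content of the corollary is the observation that for the tent family the topological entropy exponent cancels the expansion exponent in the bound, leaving just $(2t)^{-s}$.

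First I would verify that $f_t$ fits the hypotheses. The map $f_t$ is continuous on $[-1,1]$, piecewise affine with branches $f_t|_{[-1,0]}(x)=2tx+2t-1$ and $f_t|_{[0,1]}(x)=-2tx+2t-1$, so each branch is $C^\infty$ (in particular $C^{1+\beta+\epsilon}$ for any $\beta,\epsilon$), and $|f_t'|\equiv 2t$ with $\inf|f_t'|=2t>1$, giving the expansion constant $\alpha=2t$ required in Section \ref{bvint}. Thus the preceding theorem applies and yields
$$r_{ess}(\Phi_{f_t},\mathcal{B}^s_{p,q}) \leq e^{h_{top}(f_t)/p'}\,(\inf|f_t'|)^{-(1/p'+s)}.$$

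Next I would invoke the identity $e^{h_{top}(f_t)}=2t$. This is classical: by the Misiurewicz–Szlenk formula for piecewise monotone maps, $h_{top}(f_t)=\lim_n (1/n)\log \ell_n(f_t^n)$, where $\ell_n$ is the lap number; for the symmetric tent map with slope $2t$, a direct induction on $n$ (using that each maximal monotone branch of $f_t^n$ whose image covers $0$ splits into exactly two maximal monotone branches of $f_t^{n+1}$, together with the bound $\ell_n \leq (2t)^n$ from the fact that each branch of $f_t^n$ has image of length at most $2$ and slope $(2t)^n$) gives $\ell_n \sim (2t)^n$, hence $h_{top}(f_t)=\log(2t)$. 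This is stated in the corollary, so I would just cite it.

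Finally, I would substitute $e^{h_{top}(f_t)}=2t$ and $\inf|f_t'|=2t$ into the bound:
$$r_{ess}(\Phi_{f_t},\mathcal{B}^s_{p,q}) \leq (2t)^{1/p'}\cdot (2t)^{-(1/p'+s)} = (2t)^{-s},$$
which is strictly less than $1$ because $2t>1$ and $s>0$. The whole argument is algebraic once the theorem is in hand; the only potential obstacle is the entropy computation, which however is a standard classical fact about tent maps and is already built into the statement of the corollary.
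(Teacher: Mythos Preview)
Your proposal is correct and matches the paper's approach: the corollary is stated without a separate proof because it is exactly the direct substitution you describe, using $e^{h_{top}(f_t)}=2t=\inf|f_t'|$ in the bound $e^{h_{top}(f)/p'}(\inf|f'|)^{-(1/p'+s)}$ from the preceding theorem. Your verification that the tent map satisfies the hypotheses and your handling of the entropy identity are appropriate and complete.
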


\section{Piecewise expanding maps on the interval with jacobian in $\mathcal{B}^{1/p}_{p,\infty}$ }

The  class of jacobians for which the quasi-compactness of the transfer operator is obtained in this section is, as far as we know, of the lowest regularity in the literature. The setback is that we need that the dynamics satisfies an a priori estimate. 

\begin{figure}
\includegraphics[scale=0.5]{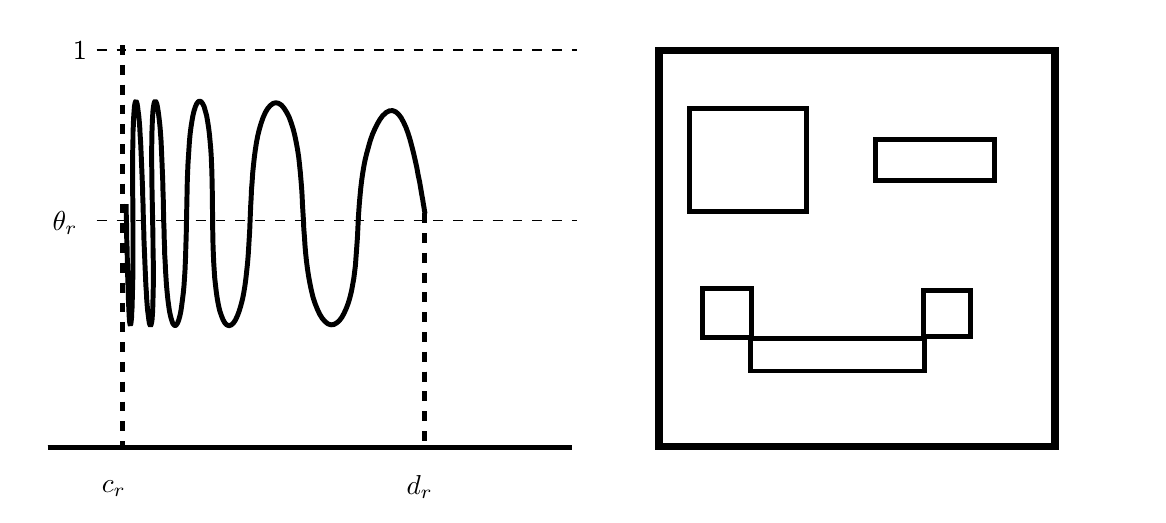}
\caption{{\it On the l.h.s.} Potentials in $\mathcal{B}^{1/p}_{p,\infty}$ can be very irregular. This one, considered in Remark \ref{bosivpt}, has infinite $p$-variation for every $p\geq 1$. {\it On the r.h.s.} The  Winky Face maps  are  a (family of)  examples (see Remark \ref{winky}) of piecewise metric-expanding maps  on the plane for which the transfer operator is quasi-compact.  }
\end{figure}

We consider here $I=[0,1]$ with the good dyadic grid $\mathcal{D}$.

\begin{theorem}\label{hhh} There is $\Cll{apap}> 0$ such that the following holds. Let $\Lambda$ be a finite set, $\theta_r \in (0,1)$ for every $r\in \Lambda$. Let $\{I_r\}_{r\in \Lambda}$ be a finite partition of $I=[0,1]$ by intervals  satisfying $|I_r|\theta_r^{-1} < 1$. 
If   $p > 1$ and $\epsilon_0 > 0$  satisfies 
$$(\#\Lambda)^{1/p'} (\max \{\theta_r\}_r)^{1/p'+s-\epsilon_0} \leq \Crr{apap}$$
 then there is $\delta> 0$ with the following property. \\
 
Choose a collection of intervals $\{J_r\}_{r\in \Lambda}$  in  $I$ such that $\theta_r = |I_r|/|J_r| <  1$ for every $r$. Let $I_r=[a_r,b_r]$, $J_r=[c_r,d_r]$. Suppose that $\alpha_r \colon I \rightarrow \mathbb{R}$ belongs to $\mathcal{B}^{1/p}_{p,\infty}(I)\cap L^\infty$, 
$$\int \alpha_r \ dm=0,$$
$|\alpha_r|_\infty < \min\{1-\theta_r,\theta_r\}$ and $\alpha_r(x)=0$ for every $x\not\in J_r$.  Define
$$h_r\colon J_r \rightarrow I_r$$
as the bi-Lipchitz map
$$h_r(x)=a_r +\int_{[c_r,x]}  \alpha_r + \theta_r \ dm.$$
Consider the piecewise expanding map
$$F\colon \cup_r I_r \rightarrow I$$
defined by $F(x)=h_r^{-1}(x)$ for $x \in I_r$. If 
$$\max_r \{|\alpha_r|_{\mathcal{B}^{1/p}_{p,q}} + |\alpha_r|_{\infty} \} < \delta$$
then 
\begin{itemize}
\item[A.] The transfer operator $\Phi$ is quasi-compact on $\mathcal{B}^s_{p,q}$, 
\item[B.] $\Phi$ satisfies the  Lasota-Yorke inequality for the pair $(\mathcal{B}^s_{p,q},L^1)$,
\item[C.]  $F$ has an absolutely continuous invariant probability $\mu$, 
\item[D.] Every absolutely continuous invariant probability  $\mu= \rho \ m$ satisfies $\rho\in \mathcal{B}^s_{p,q}$.\end{itemize}
\end{theorem}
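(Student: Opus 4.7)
The plan is to follow the same overall template used in Sections \ref{bvint} and the earlier treatment of Markovian maps: for each inverse branch $h_r$ I bound the atomic Besov norm of the localised Jacobian $g_r 1_W = (\alpha_r + \theta_r)1_W$ on dyadic pieces $W \in \mathcal{D}$ with $W \subset J_r$, plug into the essential slicing formalism from Proposition \ref{trans-gp} (Proposition 11.2 of \cite{smania-transfer}) and then invoke Corollary \ref{trans-mainc} (Corollary 10.1 of \cite{smania-transfer}) to split $\Phi$ into a finite-rank piece and a contraction on $\mathcal{B}^s_{p,q}$.

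First I would verify the geometric ingredients.  The intervals $I_r \subset I$ and every dyadic $Q \subset I$ are $(1-sp,\Crr{DGD1},1/2)$-regular domains and $(1-\beta p,\Crr{srd},0)$-strongly regular domains with constants independent of $r$ (same argument as in Section \ref{bvint}; in fact this is where the constant $\Crr{srd}$ comes from).  Because $h_r$ is bi-Lipschitz with derivative $\alpha_r + \theta_r$ lying in $[\theta_r - |\alpha_r|_\infty,\theta_r+|\alpha_r|_\infty] \subset (0,2\theta_r)$, the images $h_r(W)$ of dyadic sets behave as in Proposition \ref{lip}; in particular for $Q \in \mathcal{D}$ with $Q \subset I_r$ one has $|Q|/|h_r^{-1}Q| \asymp \theta_r$ up to a factor bounded by $1+|\alpha_r|_\infty/\theta_r$, which is close to $1$ when $\delta$ is small. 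This will let me take $\Crr{DC2}(r) = 1/2$ and $a_r \gtrsim -\log_2 \theta_r$ exactly as in \reff{max}.

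The key step is the estimate
\begin{equation}\label{keyplan}
|g_r 1_W|_{\mathcal{B}^s_{p,q}(W,\mathcal{P}_W,\mathcal{A}^{sz}_{p,q})} \ \leq\ \Cll{planC}\,\bigl(\theta_r + |\alpha_r|_{\mathcal{B}^{1/p}_{p,\infty}} + |\alpha_r|_\infty\bigr)\,|W|^{1/p-s}.
\end{equation}
To obtain this I write $g_r 1_W = \theta_r 1_W + \alpha_r 1_W$.  The first summand is (up to renormalisation) a Szlenk atom supported on $W$, giving the contribution $\theta_r |W|^{1/p-s}$.  For the second summand I use the atomic decomposition of $\mathcal{B}^{1/p}_{p,\infty}$ furnished by Proposition 2.5 of \cite{smania-transfer}: write $\alpha_r = \sum_k \sum_{P \in \mathcal{D}^k} k^{\alpha_r}_P a_P$ and restrict to $W$. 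Because multiplication by $1_W$ maps atoms supported in $P \subset W$ to atoms of the same type, and atoms $a_P$ with $P \supsetneq W$ contribute only through $L^\infty$ estimates on $W$ (the constant $a_P 1_W$ is again a scalar multiple of a Szlenk atom on $W$), the reorganisation shows that the Besov norm of $\alpha_r 1_W$ on $W$ is dominated by the global norm of $\alpha_r$, yielding $|\alpha_r 1_W|_{\mathcal{B}^s_{p,q}(W)} \lesssim (|\alpha_r|_{\mathcal{B}^{1/p}_{p,\infty}} + |\alpha_r|_\infty)|W|^{1/p-s}$.  This gives \reff{keyplan} and hence, with $Q = h_r(W)$,
$$|g_r 1_W|_{\mathcal{B}^s_{p,q}(W)} \leq \Crr{DRP}(r)\Bigl(\tfrac{|Q|}{|h_r^{-1}Q|}\Bigr)^{1/p - s + \epsilon}|W|^{1/p-s},\qquad \Crr{DRP}(r) \leq \Cll{planD}\,\theta_r^{1-(1/p-s+\epsilon)}$$
once $\delta$ is small enough that $\theta_r \gtrsim |Q|/|h_r^{-1}Q|$. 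Parts A and B then follow by computing $\Theta_r = \Cll{planE}\theta_r^{1-1/p+s-\epsilon\gamma_{_{DRS3}}}$ and noting
$$\bigl(\sum_{r\in\Lambda}\Theta_r^{p'}\bigr)^{1/p'} \leq \Cll{planF}\,(\#\Lambda)^{1/p'}(\max_r\theta_r)^{1/p'+s-\epsilon\gamma_{_{DRS3}}},$$
which is $<1$ after absorbing the fixed constants into $\Crr{apap}$, by the standing hypothesis with $\epsilon_0 = \epsilon\gamma_{_{DRS3}}$.  Corollary 10.1 of \cite{smania-transfer} then gives the splitting $\Phi = K + F$ with $\|K\|_{\mathcal{B}^s_{p,q}} < 1$ and $F$ of finite rank, which is both quasi-compactness and the Lasota-Yorke inequality for $(\mathcal{B}^s_{p,q},L^1)$.

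For part C, the Lasota-Yorke inequality together with the fact that $\Phi$ is an $L^1$-contraction yields an a.c.i.p.\ by the standard Ionescu-Tulcea--Marinescu argument (iterate $\Phi^n 1$ and extract a weak$^\star$-limit in $\mathcal{B}^s_{p,q}$, which is a fixed point in $L^1$).  For part D, I argue that if $\rho \in L^1$ satisfies $\Phi\rho = \rho$ then $\rho \in \mathcal{B}^s_{p,q}$: approximate $\rho$ in $L^1$ by $\rho_n \in \mathcal{B}^s_{p,q}$, then $\Phi^n\rho = \rho$ while the Lasota-Yorke inequality iterated gives $|\Phi^n \phi|_{\mathcal{B}^s_{p,q}} \leq \tilde{C}|\phi|_{L^1} + \lambda^n |\phi|_{\mathcal{B}^s_{p,q}}$ for all $\phi \in \mathcal{B}^s_{p,q}$; extracting a weak$^\star$-limit of $\Phi^n \rho_n$ (bounded in $\mathcal{B}^s_{p,q}$) that must equal $\rho$ places $\rho$ in $\mathcal{B}^s_{p,q}$. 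The main obstacle I expect is \reff{keyplan}: controlling the Besov norm on an arbitrary dyadic piece $W$ of a function as rough as $\alpha_r \in \mathcal{B}^{1/p}_{p,\infty}$ (see the figure illustrating how wild such potentials can be) requires genuinely using the atomic characterisation rather than any pointwise regularity, and properly accounting for atoms $a_P$ with $P\supsetneq W$ that do not restrict to atoms of the same scale.
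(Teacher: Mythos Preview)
Your proposal is correct and follows essentially the same route as the paper. The obstacle you flag at the end---the localised estimate $|g_r 1_W|_{\mathcal{B}^\beta_{p,q}(W)} \lesssim (\theta_r + |\alpha_r|_{\mathcal{B}^{1/p}_{p,\infty}} + |\alpha_r|_\infty)|W|^{1/p-\beta}$---is exactly the multiplier/restriction result for $\mathcal{B}^{1/p}_{p,\infty}\cap L^\infty$ proved in \cite{smania-besov}, which the paper simply cites as a black box; the remaining differences are cosmetic (the paper takes $a_r=1$, using only that $\theta_r<1/8$ forces at least one dyadic level shift, rather than your sharper $a_r\approx -\log_2\theta_r$, and for C and D it invokes the relevant corollaries of \cite{smania-transfer} instead of rerunning the Ionescu--Tulcea--Marinescu argument).
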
 
\begin{proof} We can assume that $\max \{\theta_r\}_r < 1/10$. The jacobian of $h_r$ is $g_r(x)=  \alpha_r(x) + \theta_r$. 
Recall that  by Proposition \ref{besov-mult} in \cite{smania-besov} there is $\Cll{re}$ such that for every $f\in \mathcal{B}^{1/p}_{p,\infty}(I)\cap L^\infty$ we have 
$$|f1_W|_{\mathcal{B}^\beta_{p,q}(W,\mathcal{D}_W)}\leq (\Crr{re} |f|_{\mathcal{B}^{1/p}_{p,q}} + |f|_\infty)|W|^{1/p-\beta}$$
for every interval $W\in I$.  In particular
$$|g_r1_W|_{\mathcal{B}^\beta_{p,q}(W,\mathcal{D}_W)}\leq (\theta_r+ \Crr{re} |\alpha_r|_{\mathcal{B}^{1/p}_{p,q}} + |\alpha_r|_\infty)|W|^{1/p-\beta}$$
for every $W\subset J_r$, $W\in \mathcal{D}$. 
 If  $\delta$ is small enough then 
\begin{equation}\label{kk}  (1-1/4)\theta_r  \leq \frac{|Q|}{|h_r^{-1}(Q)|} \leq (1+1/4)\theta_r < \frac{1}{8}\end{equation}       
for every interval $Q \subset I_r$, $r\in \Lambda$. Consequently  $|k_0(h_r^{-1}(Q))-k_0(Q)|\geq 1$, so we can take $\Crr{DC1}=1$ and $\Crr{DC2}=1/2$ and $a_r=1$. 
Recall that every interval in  is a $(1-sp,\Crr{DGD1},\Crr{DGD2})$-regular domain, provided we choose $\Crr{DGD1} >0$, $\Crr{DGD2}\in (0,1)$ properly, and in particular this holds for intervals such as $F(Q)$, where $Q$ is an interval inside $I_r$ for some $r$.   Fix $\epsilon \in (0,\epsilon_0)$  such that  $1/p-s+\epsilon < 1$ and
$$\Crr{DRS2}= \max \{ (\Crr{DC2})^{\epsilon},\Crr{DGD2}^{1/p}\}= (1/2)^{\epsilon}.$$
For every $W\subset h_r^{-1}(Q)$ 
we have
\begin{align*} |g_r1_W|_{\mathcal{B}^\beta_{p,q}(W,\mathcal{D}_W)}&\leq \frac{5}{4}\theta_r   |W|^{1/p-\beta} \\
& \leq \frac{5}{4} \Big(\frac{4}{3}\big)^{1/p-s+\epsilon}  \theta_r^{1-(1/p-s+\epsilon)} \Big( \frac{|Q|}{|h_r^{-1}(Q)|}\Big)^{1/p-s+\epsilon}   |W|^{1/p-\beta},\end{align*} 
so we take 
$$\Crr{DRP}(r)=2\theta_r^{1-(1/p-s+\epsilon)}.$$
  So
\begin{align*}\label{thetaest}
\Theta_r&= \Crr{DC1}^{\epsilon}(r) \Crr{DRP}(r)  \Crr{DGD1}^{1/p}  \Crr{DC2}^{\epsilon a_r (1-\gamma_{_{DRS3}})} \nonumber  \\
&\leq 2\Crr{DGD1}^{1/p} \theta_r^{1-(1/p-s+\epsilon)}    (1/2)^{\epsilon (1-\gamma)}.
\end{align*}

Let  $t$ be such that 
$$M=\sup_{\substack{P\in \mathcal{D}^k \\ k\geq t_n}}\# \{r\in \Lambda \colon I_r \cap P\neq \emptyset    \} \leq 2.$$
Of course 
\begin{equation}\label{nv} N=\sup_{P\in \mathcal{H}} \#\{ r\in \Lambda \ s.t. \  P\subset J_r \}\leq \#\Lambda\end{equation}
Note that every interval, and in particular the intervals in the partition  $\{ I_r\}_r$ are 
$(1-\beta p,\Crr{srd2},t)$-strongly regular  domain, for some universal $\Crr{srd2}$. 

We can apply Theorem \ref{trans-pa2} in  \cite{smania-transfer} with $p=1$  to conclude that $\Phi$ has a  $$(\Crr{GSR}\Cll{f2at},\Crr{GSR}\Crr{f1at},t)$$ essential slicing on $\mathcal{B}^s_{1.q}$,  for some $\Crr{f2at}$ and 
\begin{align*}\Cll{f1at}&=  \Crr{srd2}^{1/p}2\Crr{DGD1}^{1/p}(\#\Lambda)^{1/p'} \theta_r^{1-(1/p-s+\epsilon)}    (1/2)^{\epsilon (1-\gamma)},
\end{align*}
and Corollary \ref{trans-ww}  therein tell us that  $\Phi$ has a similar upper bound for its essential spectral radius bounded and consequently  if $$(\#\Lambda)^{1/p'} (\max \{\theta_r\}_r)^{1/p'+s-\epsilon_0}$$ is small enough we can apply Theorem \ref{trans-t1} and  Corollaries \ref{trans-spec} and \ref{trans-phy} in \cite{smania-transfer} to conclude that 
$$\sigma_{ess}(\Phi, \mathcal{B}^s_{p,q}) < 1,$$ and we  can obtain the Lasota-Yorke inequality applying Theorem \ref{trans-t1} and  Corollaries \ref{trans-spec} and \ref{trans-phy}, and Theorem \ref{trans-acim} in \cite{smania-transfer}. 
\end{proof}

\begin{remark}\label{bosivpt} Let $p > 1$. Potentials in $\mathcal{B}^{1/p}_{p,\infty}$ can be very irregular. Consider for instance
$$\alpha\colon [0,1/2]\rightarrow [-1,1]$$ given by
$\alpha(x)=\sin(2\pi \frac{\log x}{\log 2})$. Then $\alpha \in \mathcal{B}^{1/p}_{p,\infty}(I)$. Indeed if $\alpha_+(x)=\max\{\alpha(x),0\}$
$$\alpha_+=\sum_{k} \sum_{Q\in \mathcal{D}^k} c_Q a_Q$$
where $a_Q(x)=1_P(x)$ is the $(1/p,p)$-Souza's atom supported on  $Q$, $c_I=0$ and
$$c_Q= \inf_Q \alpha_+   -\inf_{Q'}  \alpha_+,$$
if $Q\in \mathcal{D}^k$ with $k\geq 1$ and $Q'$ is such that $Q\subset Q'$ and  $Q'\in\mathcal{D}^{k-1}$. If $Q\subset [2^{-(i+1)},2^{-i}]$ we have  
$$|c_Q|\leq C2^{i}|Q|,$$
and $c_Q=0$ for every $Q$ such that $Q\not\subset [2^{-(i+1)},2^{-i}]$ for every $i$, 
so
\begin{align*}\sum_{Q\in \mathcal{D}^k} |c_Q|^p&= \sum_{i < k}\sum_{\substack{Q\in \mathcal{D}^k\\ Q\subset  [2^{-(i+1)},2^{-i}]}} |c_Q|^p\\
&\leq C \sum_{i < k}  2^{(i-k)p+(k-i)}\\
&\leq  C\sum_{i < k}  2^{(k-i)(1-p)}\leq \frac{C}{1-2^{1-p}}.
\end{align*}
and consequently $\alpha_+ \in \mathcal{B}^{1/p}_{p,\infty}$. In the analogous way $\alpha_-(x)=\max\{-\alpha(x),0\}\in \mathcal{B}^{1/p}_{p,\infty}$. So $\alpha=\alpha_+-\alpha_-\in \mathcal{B}^{1/p}_{p,\infty}$. In particular we can construct $\alpha_r$ using $\alpha$ in such way to satisfy Theorem \ref{hhh} (See Figure 1). 
\end{remark}

\section{Infinitely many branches with  small images} 

There are three main motivations for the family of expanding maps we study in this section.  First,  we provided examples of maps with  jacobian in $\mathcal{B}^{1/p}_{p,\infty}$ but without absolutely continuous invariant probabilities and whose transfer operator is not quasi-compact on $\mathcal{B}^s_{p,q}$. Secondly it also provides examples of expanding maps  with infinitely many branches and whose images of most of the branches are very small, and yet the transfer operator is quasi-compact. Moreover the maps in this family  have a  dynamical behaviour quite similar to induced maps of unimodal maps that appears  in the study of the existence of wild attractors. See Bruin,  Keller,  Nowicki  and van Strien \cite{wild1}, Keller and Nowicki \cite{wild3}, Bruin, Keller and  St. Pierre \cite{wild2},  and Moreira and S. \cite{wild4}. 

\begin{figure}
\includegraphics[scale=0.32]{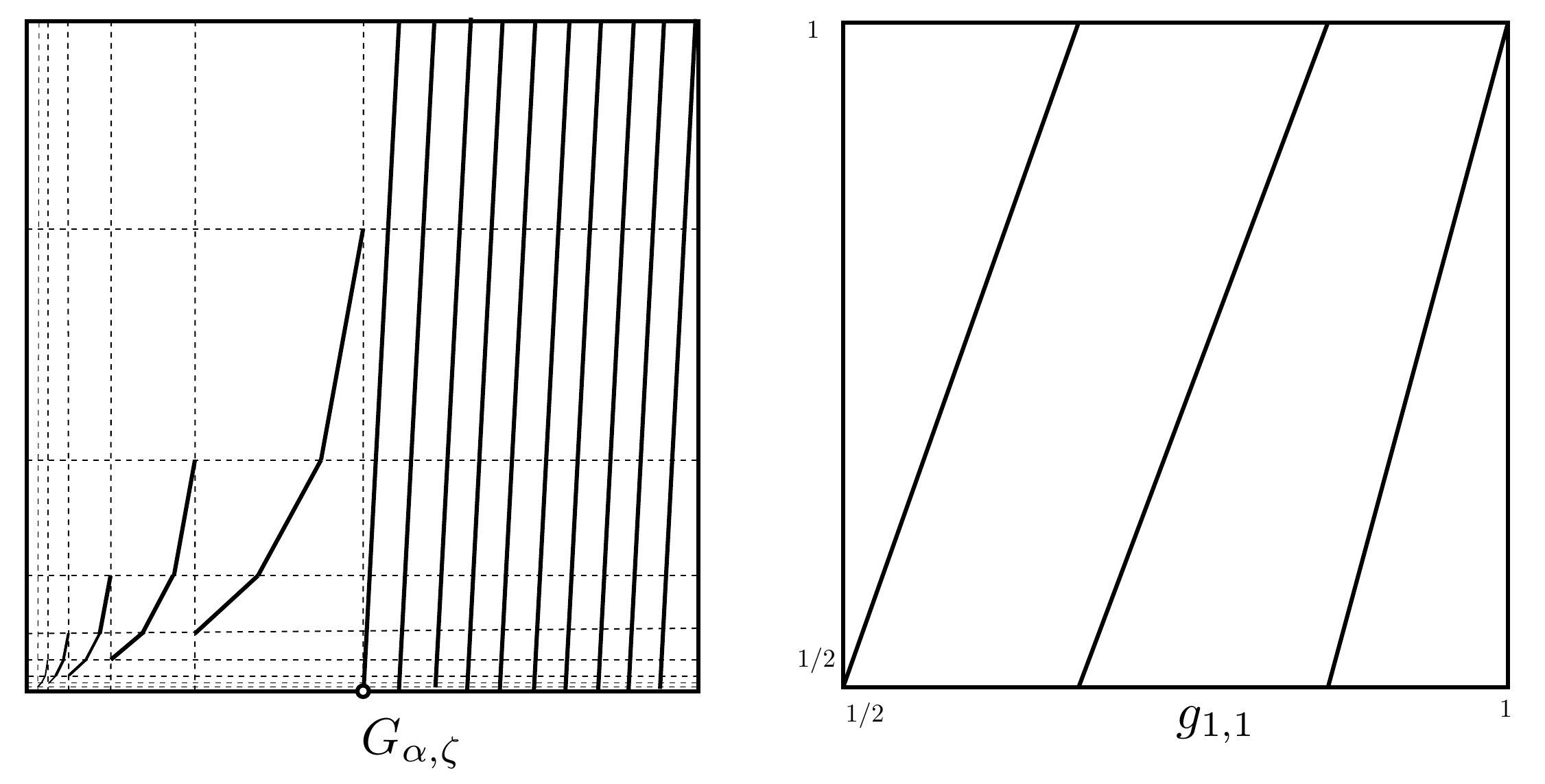}
\caption{{\it On the l.h.s.}  Example of  a map $F_{\alpha,\zeta,k_0}$. {\it On the r.h.s.} Close to $0$ the map $F_{1,1,k_0}$ is conjugate to a skew-product generated by the map $g_{1,1}$. }
\end{figure}
For every $r\geq 0$ define $L_r =[2^{-(r+1)},2^{-r}]$. Note that $L_r \in \mathcal{D}^{r+1}$. 

\begin{lemma}\label{wildd}  There is $\Cll{sai}$ such that for every $\Lambda \subset \mathbb{N}$ the set 
$$\Omega=\cup_{i\in \Lambda} L_r$$
is a $(1- \beta p, \Crr{sai},0)$-strongly regular domain.
\end{lemma}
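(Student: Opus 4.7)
The plan is to verify the strongly regular domain property directly by a case analysis that exploits the fact that each $L_r=[2^{-(r+1)},2^{-r}]$ is itself a dyadic interval. Concretely, $L_r\in \mathcal{D}^{r+1}$ (it is the position-$j=1$ interval at level $r+1$), and its only proper dyadic ancestors are the intervals $[0,2^{-k}]$ for $0\leq k\leq r$. In particular the $L_r$'s are pairwise disjoint, so $\Omega$ is already a disjoint union of dyadic intervals.

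Fix $Q\in \mathcal{D}^i$ and write $\alpha=1-\beta p$. Since any two dyadic intervals are either nested or disjoint, exactly one of three cases occurs. In Case 1, $Q$ is disjoint from every $L_r$ with $r\in \Lambda$, so $Q\cap\Omega=\emptyset$ and the empty families $\mathcal{F}^k(Q\cap\Omega)=\emptyset$ work trivially. In Case 2, $Q\subset L_{r_0}$ for some $r_0\in \Lambda$; then $Q\cap\Omega=Q$, and taking $\mathcal{F}^i(Q\cap\Omega)=\{Q\}$ and $\mathcal{F}^k=\emptyset$ for $k>i$ satisfies conditions A, B and gives sum $|Q|^\alpha$ in C.

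Case 3 is where the nesting observation pays off: if $Q$ contains some $L_r$ with $r\in\Lambda$, then by the list of ancestors above $Q$ must be of the form $Q=[0,2^{-i}]$, and $L_r\subset Q$ iff $r\geq i$. Setting $\Lambda_Q=\{r\in \Lambda:r\geq i\}$, the intersection decomposes disjointly as
\[
Q\cap\Omega \;=\; \bigsqcup_{r\in\Lambda_Q} L_r.
\]
Choose $\mathcal{F}^k(Q\cap\Omega)=\{L_{k-1}\}$ if $k-1\in \Lambda_Q$ and $\mathcal{F}^k(Q\cap\Omega)=\emptyset$ otherwise. Conditions A and B are then automatic, and for each such $k\geq i+1$ the single term in the sum satisfies
\[
\sum_{P\in \mathcal{F}^k(Q\cap\Omega)} |P|^\alpha \;=\; 2^{-k\alpha} \;\leq\; 2^{-i\alpha} \;=\; |Q|^\alpha,
\]
because $\alpha>0$ and $k\geq i+1$.

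Combining all three cases, the constant $\Crr{sai}=1$ works uniformly in $\Lambda$ and $t=0$ suffices. There is no real obstacle; the entire content is the observation that the $L_r$'s sit at distinct dyadic levels with the trivial ancestor structure $\{[0,2^{-k}]\}_{k\leq r}$, which both forces the geometry of any $Q$ that straddles $\Omega$ and makes the pieces $L_r$ of the decomposition drop in size by at least a factor of two compared to $Q$.
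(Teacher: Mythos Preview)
Your proof is correct and follows essentially the same three-case analysis as the paper's own proof. Your indexing is in fact cleaner (you correctly place $L_{k-1}\in\mathcal{D}^k$ in $\mathcal{F}^k$, whereas the paper's Case~iii has a small index slip), and you sharpen the constant to $\Crr{sai}=1$ by observing that each $\mathcal{F}^k$ contains at most one interval, which the paper's final geometric-sum bound obscures.
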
 
\begin{proof} Given $Q\in \mathcal{D}^k$, for some $k$, there are three cases. \\ \\ 
\noindent {\it Case i.} Suppose $Q\neq [0,2^{-k}]$ and $Q\subset L_r$ for some $r\notin \Lambda$. In this case  $Q\cap \Omega=\emptyset$, so pick $\mathcal{F}^i(\Omega\cap Q)=\emptyset$ for every  $i$, \\ \\
\noindent {\it Case ii.}  Suppose $Q\neq [0,2^{-k}]$ and $Q\subset L_r$ for some $r\in \Lambda$, so $Q\cap \Omega=Q$. In this case  pick $\mathcal{F}^k(\Omega\cap Q)=\{Q\}$ and $\mathcal{F}^i(\Omega\cap Q)=\emptyset$ for every  $i\neq k$.\\ \\
\noindent {\it Case iii.} If $Q=[0,2^{-k}]$ then pick either 
$$\mathcal{F}^i(\Omega\cap Q)=\{ L_{i+1}\}  $$
if $i+1 \in  \Lambda$ and $i+1\geq k$, or $\mathcal{F}^i(\Omega\cap Q)=\emptyset$ otherwise. \\
In all cases 
$$Q\cap \Omega =\cup_i \cup_{P\in \mathcal{F}^i(Q\cup \Omega)} P$$
and
$$\sum_{P\in \mathcal{F}^i(\Omega\cap Q)} |P|^{1-\beta p}\leq \sum_{i \geq k} 2^{-i(1-\beta p)}\leq \Crr{sai} 2^{-k(1-\beta p)}=\Crr{sai}  |Q|^{1-\beta p}.$$
This proves the Lemma. 
\end{proof} 

Let $i_0\in \mathbb{N}$. For each  $j=0,1,2,3$ let

$$I_{j}^{i_0}=\bigcup_{\substack{_{i\geq i_0} \\  _{i=j \ mod  \ 3}}} L_i.$$
For $\alpha> 0$  define
$$G_{\alpha,\zeta,i}\colon L_i \rightarrow I$$
as \begin{align*}
&G_{\alpha,\zeta,i}(x)\\
&=\begin{cases}
{\tiny\alpha ( 7/2-13\zeta /6) (x - 1/2^{i+1}) + \alpha/2^{i+2}},  &for \ 1/2 \leq 2^{i} x<11/16, \\
{\tiny \alpha ( 7/2-5\zeta /6) (x - 11/2^{i+4}) + \alpha( 1/2^{i+1} -13\zeta/2^{i+5})},  &for \ 11/16 \leq 2^{i} x< 7/8,   \\
{\tiny \alpha( 7/2+9\zeta /2) (x - 1/2^{i}) + \alpha/2^{i-1},} &for \  7/8 \leq 2^{i} x < 1.
\end{cases} 
\end{align*}
Note that $G_{\alpha,\zeta,i}$ is continuous, monotone  and $|G_{\alpha,\zeta,i}'|\geq 4\alpha/3$
for every $\alpha>0$ and $\zeta\in [0,1]$. Moreover $G_{1,\zeta,i}L_i = L_{i+1}\cup L_i\cup L_{i-1}.$
Define
$$F_{\alpha,\zeta, j,i_0}\colon I_{j}^{i_0} \rightarrow  I$$
as $F_{\alpha,\zeta, j,i_0}(x)=G_{\alpha,\zeta, i}(x)$ for $x\in L_i\subset I_{j}^{i_0}.$ For every $\alpha > 0$ we have that $F_{\alpha,\zeta, j,i_0}$ is injective. 

Fix $\alpha > 0$ and choose 
$$i_0=\min \{ i\geq 0\colon \alpha  2^{-i+1}\leq 1  \}.$$
Divide the interval $[2^{-i_0},1]$ in $k_0$  intervals $I_{-1},\dots, I_{-k_0}$  of same size, 
$$\Lambda=\{-k_0, \dots, -1, 0,1,2,3\}$$
and $I_{j}= I_{j}^{i_0}$ for $j\geq 0$.  Consider the piecewise linear map
$$F_{\alpha,\zeta,k_0}\colon \cup_{r\in \Lambda} I_r \rightarrow I$$
where $F_{\alpha,\zeta,k_0}(x)=G_{\alpha,\zeta, j,i_0}(x)$ for $x\in I_j$, if $j\geq 0$, and
$$F_{\alpha,\zeta,k_0}\colon I_j \rightarrow I$$
is an onto affine map if $j < 0$. In particular $|G_{\alpha,\zeta,k_0}'|\geq  k_0$ on $I_j$ for $j<0$. Denote the inverse of $F_{\alpha,\zeta,k_0}$ on  $I_j$ by $h_j$ and $J_j= F_{\alpha,\zeta,k_0}(I_j)$.

\begin{theorem} 
We have
\begin{itemize}
\item[A.] The Dirac mass supported in $\{0\}$ is the unique physical measure of $G_{1,1,k_0}$ and its basin of attraction is the whole $I$ (up to a set of zero Lebesgue measure), so in particular for every $n$
$$\Phi^n\colon \mathcal{B}^s_{p,q} \rightarrow \mathcal{B}^s_{p,q}$$ is not a quasi-compact operator and it does not satisfy  the Lasota-Yorke inequality for the pair $(\mathcal{B}^s_{p,q},L^1)$.
\item[B.] For every $\alpha> 0$, $\zeta \in (0,1]$, $p\in [1,\infty)$, $q\in [1,\infty)$   we have that  $\Phi$ is a bounded operator acting on $\mathcal{B}^s_{p,q}$. 
\item[C.]  Let $p\in (1,\infty)$  and $q\in [1,\infty]$. If $k_0$  and $\alpha$  are  large enough and $\zeta \in [0,1]$   then $\Phi$ is a quasi-compact operator on $\mathcal{B}^s_{p,q}$ and it satisfies the Lasota-Yorke inequality for the pair $(\mathcal{B}^s_{p,q},L^1)$.
\end{itemize} 
\end{theorem}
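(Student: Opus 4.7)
The three parts are largely independent and I would treat them separately.

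For Part~A, the plan is to identify $\delta_0$ as the unique physical measure of $F_{1,1,k_0}$ via a biased random-walk argument on the levels $(L_i)_{i\geq i_0}$. Using the piecewise-affine formula for $G_{1,1,i}$ one computes directly that the three Lebesgue masses of $L_i$ whose images under $F$ land in $L_{i+1}$, $L_i$, $L_{i-1}$ are in the ratio $3:3:2$, independent of $i$. This realises a Markov chain on $\mathbb{N}$ with strictly positive drift toward $+\infty$ (which in interval coordinates means $x\to 0$). Because each inverse branch is affine, bounded distortion is automatic and the chain captures the asymptotics of Lebesgue-a.e.\ orbit, so $F^n(x)\to 0$ a.e.\ and $\delta_0$ is the only physical measure. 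In particular there is no absolutely continuous invariant probability. Since $\mathcal{B}^s_{p,q}\hookrightarrow L^1$ continuously and compactly, quasi-compactness of some iterate $\Phi^n$ on $\mathcal{B}^s_{p,q}$ would produce a nontrivial non-negative fixed density in $\mathcal{B}^s_{p,q}\subset L^1$, i.e.\ an acip, contradicting the random-walk conclusion. The same contradiction rules out a Lasota--Yorke inequality for the pair $(\mathcal{B}^s_{p,q},L^1)$, which together with the compact embedding would force quasi-compactness via Ionescu-Tulcea--Marinescu.

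For Part~B the plan is to prove boundedness of $\Phi$ on $\mathcal{B}^s_{p,q}$ by a Schur-type argument on atomic coefficients, exploiting that each dyadic interval meets only finitely many branch domains. Given an atomic representation $\phi=\sum_{k,Q}c_{k,Q}a_Q$, push each $a_Q$ forward branch by branch: because $Q$ meets at most $3+k_0$ of the $I_r$'s and each branch is affine, $\Phi(a_Q)$ decomposes as a bounded sum of bi-Lipschitz rescalings of $a_Q$, each of which rewrites (by Proposition~2.5 of \cite{smania-transfer}) as a uniformly bounded combination of Szemer\'edi-type atoms at the target scale. The resulting atom-to-atom linear map has uniformly bounded row and column sums --- columns from the source-side finite-intersection bound, rows because at most $3+k_0$ preimage branches can land in any given dyadic target --- and a standard Schur estimate then yields boundedness on $\ell^q(\ell^p)$ for $p>1$, hence on $\mathcal{B}^s_{p,q}$.

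For Part~C I would run the Section~\ref{bvint} template branch by branch and then invoke the same local-finite-incidence observation to tame the infinite tower of branches. The $k_0$ affine onto branches in $[2^{-i_0},1]$ each satisfy $\Theta_J\lesssim k_0^{-(1/p'+s-\epsilon\gamma)}$, contributing a $p'$-power sum of order $k_0^{-(sp'-\epsilon\gamma p')}$, while each of the three affine sub-branches of $G_{\alpha,\zeta,i}$ inside $L_i$ satisfies $\Theta_J\lesssim\alpha^{-(1/p'+s-\epsilon\gamma)}$, uniformly in $i$. The essential-slicing constant of Proposition~11.2 of \cite{smania-transfer} is controlled not by the global sum over all branches but by the local multiplicity $\sup_Q\#\{J\in\Lambda_1:J\cap Q\neq\emptyset\}\leq 3+k_0$, which is finite here thanks to the disjointness of the levels; the infinite tower therefore contributes a bounded quantity times $\alpha^{-(1/p'+s-\epsilon\gamma)}$. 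Taking $k_0$ large, and then $\alpha$ large, drives the essential-slicing constant $\Crr{f1}$ strictly below $1$, so Corollary~10.1 of \cite{smania-transfer} together with Theorem~\ref{trans-t1}, Corollary~\ref{trans-spec}, and Theorem~\ref{trans-acim} of \cite{smania-transfer} deliver quasi-compactness of $\Phi$ on $\mathcal{B}^s_{p,q}$, the Lasota--Yorke inequality, existence of an acip, and the Besov regularity of its density. The main obstacle is exactly this infinite-branch issue: the naive global sum $\sum_{J\in\Lambda_1}\Theta_J^{p'}$ diverges, and it is only the geometric disjointness of the levels, fed into the localized essential-slicing framework, that converts the estimate into a convergent and controllable one.
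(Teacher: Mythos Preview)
Your plan for Part~A is essentially the paper's argument in different clothing: the paper makes the ``biased random walk'' rigorous by exhibiting an explicit conjugacy near $0$ between $F_{1,1,k_0}$ and a skew product $T(x,i)=(g_{1,1}(x),i+\psi(x))$ on $[1/2,1]\times\mathbb{Z}$, observes that Lebesgue is the acip of the base map $g_{1,1}$, and concludes via Birkhoff that $\int\psi\,dm>0$ forces $i\to+\infty$ (i.e.\ $x\to 0$) almost everywhere. Your $3{:}3{:}2$ computation is exactly the calculation of $\int\psi\,dm$, so the content is the same; just be aware that calling it a ``Markov chain'' is a slight abuse, and the skew-product/Birkhoff formulation is what actually justifies passing from Lebesgue proportions to almost-sure drift.

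For Parts~B and C there is a genuine gap. Your argument rests on the local multiplicity bound $\sup_Q\#\{J\in\Lambda_1:J\cap Q\neq\emptyset\}\leq 3+k_0$ (and the analogous claim that at most $3+k_0$ preimage branches land in any dyadic target), but this is false: the dyadic interval $Q=[0,2^{-k}]$ meets every $L_i$ with $i\geq k$, so both the source-side and target-side multiplicities are infinite near $0$. The Schur test for B and the essential-slicing hypothesis of Proposition~11.2 for C therefore do not apply as stated. You correctly diagnose that the naive sum $\sum_J\Theta_J^{p'}$ diverges, but ``disjointness of the levels'' does not by itself repair this, precisely because the levels accumulate at $0$.

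The paper's device is to \emph{group} the infinitely many branches into finitely many injective super-branches. Since $G_{\alpha,\zeta,i}(L_i)\subset L_{i-1}\cup L_i\cup L_{i+1}$, the restriction of $F$ to each congruence class $I_j=\bigcup_{i\equiv j\bmod 3}L_i$ is injective, so one obtains a genuinely finite family $\{I_{-k_0},\dots,I_{-1},I_0,I_1,I_2\}$ of branch domains. The price is that the $I_j$ are no longer intervals; the paper pays it with Lemma~\ref{wildd}, which shows that any union $\bigcup_{i\in\Lambda}L_i$ is a $(1-\beta p,\Crr{sai},0)$-strongly regular domain with a \emph{uniform} constant. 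With only finitely many (super-)branches, the estimate $|h_j'\cdot 1_W|_{\mathcal{B}^\beta_{p,q}}\lesssim\alpha^{-1}|W|^{1/p-\beta}$ and Theorem~\ref{trans-tt1} of \cite{smania-transfer} give $|\Phi_2|_{\mathcal{B}^s_{p,q}}\lesssim\alpha^{-(1-1/p+s-\epsilon/2)}$, which is boundedness (B) for all $\alpha>0$ and, for $\alpha$ large, combines with the finite-rank-plus-small decomposition of the $k_0$ onto branches to give quasi-compactness and Lasota--Yorke (C). The grouping trick together with Lemma~\ref{wildd} is the missing idea in your plan.
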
 
\begin{proof}[Proof of A] Note that $G_{1,1,k_0}$ is an expanding  markovian map. It is not difficult to show that $G_{1,1,k_0}$ has the following property: if $S$ satisfies  $G_{1,1,k_0}^{-1}(S)=S$ then either $m(S)=0$, $m(S)=1$ or 
$$S\subset \{x\in I\colon \  \lim_n G_{1,1,k_0}^n(x)=0\}.$$
So if  the basin of attraction of the Dirac mass supported in $\{0\}$ has positive Lebesgue measure then it has full Lebesgue measure since its complement is backward invariant. 
Note that $G_{1,1,k_0}$ is conjugated with a skew product close to $0$. Indeed 

Define
$g_{1,1}\colon [1/2,1]\rightarrow [1/2,1]$
as
$$ 
g_{1,1}(x)=\begin{cases}
8x/3 - 5/6 , &   1/2 \leq x<11/16, \\
8x/3  - 4/3&  11/16 \leq x< 7/8,   \\
4x   -3 &  7/8 \leq x< 1.
\end{cases} 
$$
and
$\psi \colon [1/2,1]\rightarrow \{ -1,0,1\}$
as
$$ 
\psi(x)=\begin{cases}
-1 , &   1/2 \leq x<11/16, \\
0&  11/16 \leq x< 7/8,   \\
1&  7/8 \leq x< 1.
\end{cases} 
$$
Consider the  skew product 
$$T\colon [1/2,1]\times \mathbb{Z}\rightarrow [1/2,1]\times \mathbb{Z}$$
given by $T(x,i)=(g_{1,1}(x), i+\psi(x))$. Define
$$H\colon  [1/2,1]\times \mathbb{Z} \rightarrow (0,+\infty)$$
by $H(x,i)=2^{-i}x$. Then    $H\circ T(x,i)= G_{1,1,k_0}\circ H(x,i)$, provided $i$ is large enough. Note that the (normalised)  Lebesgue measure $m$ on $[1/2,1]$ is the unique absolutely continuous invariant probability  of $g_{1,1}$ and $m$ is ergodic. Since
$$\int \psi \ dm > 0$$
it follows that for $x \in [0,1]$ in a subset $S$ of positive Lebesgue measure we have
$$\lim_{n\rightarrow \infty} G_{1,1,k_0}^n(x)=0,$$
which implies that $S$ has full Lebesgue measure.  So the Dirac mass at $0$ is the unique physical measure of $G_{1,1,k_0}$ and its basin of attraction has full Lebesgue measure.
It follows that $\Phi^n$ is not quasi-compact on $\mathcal{B}^s_{p,q}$, otherwise $G_{1,1,k_0}$ would have an absolutely continuous invariant probability. 
\end{proof} 

\begin{proof}[Proof of B] Choose $k_0$ large enough such  for every $\alpha > 0$ and $\zeta \in [0,1]$ we have  that the transfer operator $\Phi_1$  of the restriction
$$F_{\alpha,\zeta,k_0}\colon \cup_{i< 0  }I_i \rightarrow I$$
can be written has
$$\Phi_1= T_1+ K_1,$$
where $T_1$ is a finite-rank operator, $|K_1|_{\mathcal{B}^s_{p,q}}< 1/4$and it also satisfies the  Lasota-Yorke inequality
$$|\Phi_1 f|_{\mathcal{B}^s_{p,q}}\leq 4^{-1} |f|_{\mathcal{B}^s_{p,q}} + \Cll{yyy}|f|_1.$$
Consider the  transfer operator $\Phi_2$  of the restriction
$$F_{\alpha,\zeta,k_0}\colon \cup_{i\geq 0  }I_i \rightarrow I$$
By Lemma \ref{wildd} every domain $I_{j}$, with $j=0,1,2,3$, is a $(1- \beta p, \Crr{sai},0)$-strongly regular domain. For every $P=[a,b]\in \mathcal{D}^k$ such that $P\subset J_i$ there is $[c,d]\subset  [a,b]$   such that
$$h_i'\cdot 1_P =  \frac{1}{\alpha} \big( (7/2-13\zeta /6)^{-1}1_{[a,c]} + ( 7/2-5\zeta /6 )^{-1}1_{[c,d]} + (7/2+9\zeta /2)^{-1}1_{[d,b]} \big)
$$
so if $\zeta \in [0,1]$ then
$$|h_i'\cdot 1_W|_{\mathcal{B}^\beta_{p,q}}\leq  \frac{\Cll{tt}}{\alpha} |W|^{1/p-\beta} \leq  \frac{\Cll{poi}}{\alpha^{1-1/p +s-\epsilon}}\big(\frac{|W|}{|h_i^{-1}(W)|}\Big)^{1/p-s+\epsilon} |W|^{1/p-\beta}  .$$
Let $\Crr{DRP}=\Crr{poi}/\alpha^{1-1/p +s-\epsilon}$. We can take as usual $\Crr{DC1}=1$ and $\Crr{DC2}=1/2$ and 
$$a_r=-\frac{\log \alpha}{\log 2}- \Cll{yu},$$
Fix $\gamma=1/2$. Choosing $\epsilon $ small enough we have
$$
\Theta_i= \Cll{wwww} 1/\alpha^{1-1/p +s-\epsilon/2} $$
for every $i=0,1,2,3.$
By Theorem \ref{trans-tt1} in \cite{smania-transfer} we have
that $\Phi_2$ is a bounded operator in $\mathcal{B}^s_{p,q}$  with
$$|\Phi_2|_{\mathcal{B}^s_{p,q}}\leq \Cll{nnn}/\alpha^{1-1/p +s-\epsilon/2},$$
so if $\alpha$ is large enough then $|\Phi_2|_{\mathcal{B}^s_{p,q}}\leq 1/4$ and consequently $\Phi= \Phi_1+\Phi_2$ is a quasi-compact operator and it satisfies the Lasota-Yorke inequality for the pair $(\mathcal{B}^s_{p,q},L^1)$. 
\end{proof}

\section{Lorenz maps with non-flat singularities} 

One of the the motivation to the results Keller \cite{keller} is to study Lorenz maps, an important class of examples that appears in the study of singular hyperbolic flows. Here we  obtain the quasi-compactness in a spaces of functions with more general class of observables, that includes unbounded ones.

\begin{proposition} \label{purefolding}
Let $\Lambda_1$ be a collection of pairwise disjoint intervals of $\hat{I}=[a,b]$ and
$$F\colon \cup_{J\in \Lambda_1} J \rightarrow \hat{I}$$
be a map with following property.  For every $J \in \Lambda_1$ we have that the restriction of $F$ to $J$  is
\begin{itemize}
\item  either   a $C^{1+\beta+\epsilon}$- diffeomorphism,
\item  or  
$$F(x)=     (D_Jx-a_J)^{1/(1+\gamma)}.$$
for $x\in J=[a_J,b_J]$,  $D_J > 0$ and $\beta < \min\{1,\gamma\}.$ In this case we say that $F\colon J \rightarrow F(J)$ is a {\it Lorenz branch.}
\end{itemize} 
Then the Ruelle-Perron-Frobenious  operator $\Phi$ of $F$ with $g_J=|h_J'|$  is a bounded operator in  $\mathcal{B}^s_{1,q}=\mathcal{B}^s_{1,q}(\hat{I},\mathcal{D}, m)$ that can be written as
$$\Phi=Z_F+K_F$$
where $Z_F$ is a bounded finite rank operator on $\mathcal{B}^s_{1,q}$ and

\begin{equation}\label{w23} |K_F|_{\mathcal{B}^s_{1,q}}\leq  \frac{2}{1-(1/2)^{\epsilon \gamma_{_{DRS3}}}}  \Crr{GBS}4^{\epsilon+1} (2\Crr{uuu}+ \Crr{novac}) \alpha^{-(s+\epsilon\gamma_{_{DRS3}})} \Crr{GC}.\end{equation}
Here $\alpha = \inf |F'|.$ In partcular if  $\alpha$ is large enough we have that $\Phi$ is quasi-compact in $\mathcal{B}^s_{1,q}$.
\end{proposition}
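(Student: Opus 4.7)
The plan is to feed branch-by-branch atomic estimates into the slicing--decomposition machinery of \cite{smania-transfer}, exactly as was done in Sections \ref{markov} and \ref{bvint}.

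First, for each branch $J\in\Lambda_1$ I would establish a uniform atomic bound of the shape
\[
|g_J 1_W|_{\mathcal{B}^\beta_{p,q}(W,\mathcal{D}_W,\mathcal{A}^{sz}_{p,q})}\;\leq\;C\,\frac{|Q|}{|h_J^{-1}(Q)|}\,|W|^{1/p-\beta}
\]
for $W\subset h_J^{-1}(Q)$, $Q\in\mathcal{D}$. For a smooth branch, $g_J=|h_J'|$ is $(\beta+\epsilon)$-H\"older with bounded distortion, so Proposition \ref{holderr}.B delivers the bound with $C=2\Crr{uuu}$ (on atoms of sufficiently fine dyadic level, where the side hypothesis holds automatically). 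For a Lorenz branch the inverse $h_J$ is an affine rescaling of $y\mapsto y^{1+\gamma}$, so its jacobian is a scalar multiple of the power function treated in Proposition \ref{lorenzz}, yielding the same inequality with $C=\Crr{novac}$. In either case $C\leq 2\Crr{uuu}+\Crr{novac}$.

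Next I would feed this bound into the formalism of Section \ref{bvint}. Since $F$ sends intervals to intervals, each $h_J^{-1}(Q)$ is a $(1-sp,\Crr{DGD1},\Crr{DGD2})$-regular domain with $\Crr{DGD2}$ freely chosen small; inequality (\ref{jjjj}) gives $\Crr{DC1}=4$, $\Crr{DC2}=1/2$, and $\alpha=\inf|F'|$ forces $|Q|/|h_J^{-1}(Q)|\leq 1/\alpha$, hence a lower bound $a_J\sim\log_2\alpha$ on the dyadic-level jump. Choosing $\epsilon$ small enough that $\Crr{DRS2}=(1/2)^\epsilon$ dominates $\Crr{DGD2}^{1/p}$, the formal $\Theta_J$ of the slicing framework is uniformly bounded by
\[
\sup_{J\in\Lambda_1}\Theta_J\;\leq\;\Crr{DGD1}^{1/p}\,4^{\epsilon+1}\,(2\Crr{uuu}+\Crr{novac})\,\alpha^{-(s+\epsilon\gamma_{_{DRS3}})}.
\]

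Finally, because $p=1$ so that $p'=\infty$, only this supremum enters the slicing norm, and Corollary 10.1 of \cite{smania-transfer} immediately yields $\Phi=Z_F+K_F$ with $Z_F$ of finite rank and
\[
|K_F|_{\mathcal{B}^s_{1,q}}\;\leq\;\frac{2}{1-(1/2)^{\epsilon\gamma_{_{DRS3}}}}\,\Crr{GBS}\,\sup_J\Theta_J\,\Crr{GC},
\]
which is exactly (\ref{w23}); quasi-compactness follows as soon as $\alpha$ is large enough to push this below $1$.

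The only delicate point, and the main obstacle, is meeting the side hypothesis of Proposition \ref{holderr}.B on the smooth branches, which demands $|W|$ and $\mathrm{diam}\,h_J^{-1}(Q)$ be small. This is the same issue encountered in Theorem \ref{marko} and is resolved identically: coarse-level atoms contribute only a finite-rank error that is absorbed into $Z_F$, while on sufficiently fine atoms the hypothesis is automatic from the $C^{1+\beta+\epsilon}$ bounded distortion. The Lorenz branches carry no such constraint and need no truncation.
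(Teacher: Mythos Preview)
Your proposal is correct and follows essentially the same route as the paper: establish the branch-by-branch atomic bound via Proposition~\ref{holderr}.B (smooth branches) and Proposition~\ref{lorenzz} (Lorenz branches), then plug into the $\Theta_J$ formalism from Section~\ref{bvint} with $\Crr{DC1}=4$, $\Crr{DC2}=1/2$, take $\epsilon$ small so that $\Crr{DRS2}=(1/2)^\epsilon$, and invoke Corollary~10.1 of \cite{smania-transfer} with $p=1$. The only cosmetic difference is how the side hypothesis of Proposition~\ref{holderr}.B is met: the paper refines $\Lambda_1$ into finitely many smaller subintervals on which the smallness condition holds, whereas you propose absorbing the finitely many coarse dyadic levels directly into $Z_F$; both mechanisms are valid and lead to the same bound.
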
 
\begin{proof} To deal with the non-Lorenz branches, we will use methods similar  to those in Section \ref{bvint}. Proposition \ref{holderr}.B we  can refine $\Lambda_1$ (that is, replace intervals in $\Lambda_1$  by finite collections of pairwise disjoint intervals that covers the original intervals) in such way that for every non-Lorenz branch $F\colon J \rightarrow f(J)$ we have 
\begin{align*} |g_J1_W|_{\mathcal{B}^\beta_{p,q}(W,\mathcal{P}_W,\mathcal{A}^{sz}_{p,q})}  
&\leq 2\Crr{uuu} \frac{|Q|}{|h^{-1}(Q)|}  |W|^{1/p-\beta}\\
&\leq 2\Crr{uuu}\alpha^{-(1-1/p+s-\epsilon)}  \Big(\frac{|Q|}{|h^{-1}(Q)|}\Big)^{1/p-s +\epsilon}    |W|^{1/p-\beta}.  \end{align*}
for every $Q,J\in \mathcal{D}$ such that $Q \subset J$ and $W\subset f(Q)$. Here $g_J=|h_J'|$ and  $h_J$ is the corresponding inverse branch and $\Crr{uuu}$ depends only on the good grid $\mathcal{D}$. 

On the other hand if $J\in \Lambda_1$ is a Lorenz branch then  by Proposition \ref{lorenzz} we have
\begin{align*} |g1_W|_{\mathcal{B}^\beta_{p,q}(W,\mathcal{P}_W,\mathcal{A}^{sz}_{p,q})}  &\leq    \Crr{novac} \frac{|Q|}{|h^{-1}Q|} |W|^{1/p-\beta} \\
&\leq  \Crr{novac} \alpha^{-(1-1/p+s-\epsilon)}  \Big(\frac{|Q|}{|h^{-1}(Q)|}\Big)^{1/p-s +\epsilon}    |W|^{1/p-\beta}.
\end{align*} 
where $Q$ and $W$ satisfy the same conditions as before. 

Note that (\ref{jjjj}) and (\ref{max}) also holds (with $j=1$) for every $J \in \Lambda_1$.  Taking $\epsilon$ small enough we obtain
\begin{align*}
\Theta_J&=  \Crr{DC1}^{\epsilon} \Crr{DRP}(J)  \Crr{DGD1}^{1/p}  (1/2)^{\epsilon a_J (1-\gamma_{_{DRS3}})} \\
&\leq  4^{\epsilon+1} (2\Crr{uuu}+ \Crr{novac}) \alpha^{-(1-1/p+s-\epsilon)}  \alpha ^{\epsilon(1-\gamma_{_{DRS3}})}\end{align*}
and  consequently (\ref{w23}) holds.
\end{proof}

\begin{corollary}  Let 
$$f\colon \cup_{J\in \Lambda_1} J \rightarrow I$$
be a map with following property. For every $J \in \Lambda_1$ we have that the restriction of $f$ to $J$  is
\begin{itemize}
\item[i.](Branch Type I) either   a $C^{1+\beta+\epsilon}$- diffeomorphism,
\item[ii.](Branch Type II)  or  
$$f(x)=   \psi_J ((D_J\phi_J(x)-a_J)^{1/(1+\gamma)}).$$
for $x\in J=[a_J,b_J]$ and $\beta < \min\{1,\gamma\}$,  and  $\phi_J$, $\psi_J$ are $C^{1+\beta+\epsilon}$- diffeomorphisms. \end{itemize} 
If $\alpha = \inf |f'|  > 1$ then $\Phi$ is a  quasi-compact operator acting on  $\mathcal{B}^s_{1,q}$ and satisfying 
\begin{equation}\label{llore} r_{ess}(\Phi,\mathcal{B}^s_{1,q})\leq \alpha^{-s}.\end{equation} 
\end{corollary}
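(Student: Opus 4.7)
The plan is to apply the same framework as in Proposition \ref{purefolding} to a sufficiently high iterate $f^n$, using the identity $r_{ess}(\Phi, \mathcal{B}^s_{1,q})^n = r_{ess}(\Phi^n, \mathcal{B}^s_{1,q})$. Since $\inf|(f^n)'| \geq \alpha^n$ and $\alpha > 1$, for $n$ large enough we will be in the ``large derivative'' regime where the atomic slicing bound becomes a genuine contraction.

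First I would analyze the structure of a branch of $f^n$. It corresponds to an itinerary $(J_0,\ldots,J_{n-1})$ of one-step branches, each of which is either Type I or Type II. Writing a Type II branch as $\psi_J \circ L_J \circ \phi_J$ with $L_J(y)=(D_J y-a_J)^{1/(1+\gamma)}$ a pure Lorenz branch, and grouping consecutive $C^{1+\beta+\epsilon}$-diffeomorphism factors (including the $\phi$'s and $\psi$'s adjacent to Type II blocks) into a single diffeomorphism, the branch takes the shape $F_k\circ L_k\circ F_{k-1}\circ\cdots\circ L_1\circ F_0$ with each $F_i$ a $C^{1+\beta+\epsilon}$-diffeomorphism and each $L_i$ a pure Lorenz branch. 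The derivative singularities of such a branch lie only at endpoints of the monotonicity interval.

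Next I would establish, uniformly in $n$, the Besov atomic estimate for the Jacobian $g$ of each inverse branch $h$: for every $Q\subset J\in \mathcal{D}$ in the range of $h$ and every $W\subset h^{-1}(Q)$,
$$|g\,1_W|_{\mathcal{B}^\beta_{1,q}(W,\mathcal{D}_W,\mathcal{A}^{sz}_{1,q})} \leq C_0\,\frac{|Q|}{|h^{-1}(Q)|}\,|W|^{1-\beta}.$$
For branches with no Lorenz factor this is Proposition \ref{holderr}.B together with the classical bounded distortion argument, whose constant is controlled by the convergent series $\sum_i \alpha^{-i(\beta+\epsilon)}$. For branches containing Lorenz factors, the observation is that near each singular endpoint the map looks, after straightening by the surrounding smooth factors $F_i$ in a bi-Lipschitz chart of bounded distortion, like a pure Lorenz branch; thus Proposition \ref{lorenzz} applies there, while Proposition \ref{holderr}.B handles the complementary region, and the distortion constants of the $F_i$ are absorbed into a universal constant depending only on the uniform $C^{1+\beta+\epsilon}$ norms of the $\phi_J,\psi_J$. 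With this estimate, the argument of Proposition \ref{purefolding} transfers verbatim to $f^n$ and gives $\Phi^n=Z_n+K_n$, $Z_n$ finite rank, with
$$|K_n|_{\mathcal{B}^s_{1,q}}\;\leq\;\frac{C}{1-(1/2)^{\epsilon\gamma_{_{DRS3}}}}\,\alpha^{-n(s+\epsilon\gamma_{_{DRS3}})}.$$
Extracting $n$-th roots and letting $\epsilon\gamma_{_{DRS3}}\to 0$ yields $r_{ess}(\Phi,\mathcal{B}^s_{1,q})\leq \alpha^{-s}$.

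The hard part will be the $n$-uniformity of the constant $C_0$ in the Besov atomic bound when the branch of $f^n$ contains many Lorenz factors. One needs to verify that, between consecutive Lorenz factors $L_j$ and $L_{j+1}$, the grouped diffeomorphism $F_j$ has Hölder distortion bounded independently of $n$ (which follows from Hölder continuity of $\log|f'|$ away from the singularities, together with expansion), and that composing several pure Lorenz maps of exponent $\gamma$ produces a singularity with a \emph{larger} power $1/(1+\gamma)^\ell$, still covered by the hypothesis $\beta<\gamma$, so that Proposition \ref{lorenzz} remains applicable on the near-singularity pieces without deterioration of the constant.
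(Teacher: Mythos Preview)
Your strategy coincides with the paper's at the top level: iterate $f$ to force large expansion and then feed $f^n$ into the machinery of Proposition~\ref{purefolding}. The gap is in your last paragraph, where you assert that Proposition~\ref{lorenzz} applies to a composition of $\ell$ Lorenz factors ``without deterioration of the constant.'' Inspect Case~B of that proof: the normalising factor in the H\"older atom is $(\gamma+1)^2$, so for the effective exponent $\gamma_\ell=(1+\gamma)^\ell-1$ of the $\ell$-fold composition the constant $\Crr{novac}$ is of order $(1+\gamma)^{2\ell}$. Since $\ell$ can be as large as $n$, this inserts a factor $(1+\gamma)^{2n}$ into your bound on $|K_n|$, and nothing in the hypotheses prevents it from dominating $\alpha^{-ns}$. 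A related difficulty you pass over: the intermediate ``grouped diffeomorphisms'' $F_j$ are not applied on fixed domains but on shrinking neighbourhoods of successive singularities, and their bi-Lipschitz straightening constants must be controlled \emph{jointly} with the Lorenz pieces, not separately.

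The paper circumvents both issues by a different device. For each Type~II branch of $f^j$ it \emph{unfolds} the composition $\psi_n\circ E_n\circ\phi_n\circ\cdots\circ E_1\circ\phi_1$ onto an enlarged interval $\hat I\supset I$, inserting auxiliary intervals so that a single application of an auxiliary map $F_2$ performs exactly one pure factor (either one $C^{1+\beta+\epsilon}$ diffeomorphism $\omega_\ell$ or one pure Lorenz map $\tilde E_\ell$), and $f^j$ on the Type~II part equals a fixed power $F_2^{2j+1}$. A rescaling parameter $A$ (chosen with $A/\Crr{ddda}=\alpha^{2j}$) is used to force each of these individual pure factors to expand by at least a definite power of $\alpha$, so that Proposition~\ref{purefolding} applies at every single step with a constant depending only on the original data of $f$, never on the number of Lorenz factors encountered. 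This unfolding-plus-rescaling, rather than a direct Jacobian estimate for the composed branch, is the idea that actually delivers the $n$-uniformity you need.
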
 
\begin{proof} 

 Let $I=[c,d]$. We claim  that there are  two functions $F_1\colon A \rightarrow I$ and $F_2\colon B \rightarrow \hat{I}$, where $\hat{I}=[c,2d-c]$ such that 
\begin{itemize}
\item[-]  We have that $I \subset A\cup B \subset   \hat{I}=[a,\hat{b}]$. Moreover  $A$ and $B$ are disjoint and $|\hat{I}|=2^t|I|$ for some $t\in \mathbb{N}$, 
\item[-] Both $F_1$ and $F_2$ satisfy the assumptions of Proposition  \ref{purefolding},  $\inf |F'_1| > \alpha^j$ and $\inf |F'_1| > \alpha.$
\item[-]  We have
$$f^j(x)= \begin{cases}   F_1(x),   & $if$ \  x\in A,\\
F_2^{2j+1}(x), &if \ x\in B\cap I.
\end{cases}    
 $$
\end{itemize} 

In particular if   $$\Phi_{F_i}\colon \mathcal{B}^s_{1,q}(\hat{I},\mathcal{D}_{\hat{I}},m) \rightarrow \mathcal{B}^s_{1,q}(\hat{I},\mathcal{D}_{\hat{I}},m)$$ is the Ruelle-Perron-Frobenious of $F_i$, with $i=1,2$ then for every small $\epsilon, \gamma_{_{DRS3}}$   we can write
$$ \Phi_{F_1} + \Phi_{F_2}^j =  Z_j + K_j,$$
where $Z_j$ has finite rank and 
$$|K_j|_{\mathcal{B}^s_{1,q}(\hat{I},\mathcal{D}_{\hat{I}},m)}\leq  C \alpha^{-j(s+\epsilon\gamma_{_{DRS3}})},$$
where the constant $C$ may depends on $\epsilon$ but does not depend on $j$.  It is easy to see that the inclusion
$$\mathcal{I}\colon \mathcal{B}^s_{1,q}(I,\mathcal{D}_{I},m) \rightarrow  \mathcal{B}^s_{1,q}(\hat{I},\mathcal{D}_{\hat{I}},m)$$
given by $\mathcal{I}(\psi)=\psi$ is continuous, as well the  multiplier
$$\mathcal{M}\colon \mathcal{B}^s_{1,q}(\hat{I},\mathcal{D}_{\hat{I}},m) \rightarrow \mathcal{B}^s_{1,q}(I,\mathcal{D}_{I},m)$$
given by $\mathcal{M}(\psi)=1_I \psi$.

If $$\Phi_{f}\colon \mathcal{B}^s_{1,q}(I,\mathcal{D}_{I},m) \rightarrow \mathcal{B}^s_{1,q}(I,\mathcal{D}_{I},m)$$ is the Ruelle-Perron-Frobenious of $f$  we have
$$\Phi^j_f  \psi = \Phi_{F_1}\psi + \Phi_{F_2}^j\psi$$
for every $\psi\in L^1(m)$ with support on $I$.  So
$$\Phi^j_f =   \mathcal{M} Z_j\mathcal{I}  + \mathcal{M} K_j \mathcal{I},$$
Since  $\mathcal{M} Z_j\mathcal{I}$ is a finite rank operator and $ \mathcal{M} K_j \mathcal{I}$ and
$$ |\mathcal{M} K_j \mathcal{I}|_{\mathcal{B}^s_{1,q}(I,\mathcal{D}_{I},m)}\leq |\mathcal{M}| C \alpha^{-j(s+\epsilon\gamma_{_{DRS3}})} |\mathcal{I}|$$
that implies 
$$r_{ess}(\Phi_f, \mathcal{B}^s_{1,q}(I,\mathcal{D}_{I},m))\leq \alpha^{-s}.$$

 It remains to prove the claim. Let $J_1,\dots J_k$ be an enumeration of the elements $J\in \Lambda$ such that  $f^j\colon J \rightarrow f^j(J)$ is a branch of  Type II.  We are going to define recursively a family of intervals $\{U_1,\dots,U_k\}$,  intervals
$$I_0\subset I_1 \subset \cdots \subset I_k$$
and respective partitions $\Lambda^i$ of $I_i$ by intervals and maps
$$f_i\colon \cup_{J\in \Lambda^i} \rightarrow I_i$$
such that  $J_\ell \in \Lambda_i$  and $f_i=f^j$ on $J_\ell$ for every $\ell > i$.  
Define $f_0=f^j$, $I_0=I$ and  $\Lambda^0=\Lambda_j$. Assume we have defined $f_i,I_i,$ and $\Lambda^i$. Let $J_{i+1}=[a_1,b_1]$. We have that  $f^j=f_i \colon J_{i+1}\rightarrow f^j(J_{i+1})$   can be written as 
$$f^j= \psi_n\circ E_n \circ \phi_n \circ \psi_{n-1} \circ E_{n-1} \circ \phi_{n-1} \circ  \cdots  \psi_{1} \circ E_1 \circ \phi_1,$$
where $1\leq n\leq j$, Here    
$$\phi_i\colon [a_\ell,b_\ell]\rightarrow [0,c_\ell]$$ and
$$\phi_\ell\colon [0,E_\ell(c_\ell)]\rightarrow [a_{\ell+1},b_{\ell+1}]$$
are  $C^{1+\beta+\epsilon}$- diffeomorphisms, the  map $E_\ell$ is  defined by 
$E_\ell(x)= (D_\ell x)^{1/(1+\gamma_\ell)}$, with $D_\ell \in \{D_J\}_{J\in \Lambda_1}$, and $a_\ell=0$ for $1<    \ell < n$, and moreover there is $\Cll{ddda}$ such that 
$$\frac{1}{\Crr{ddda}}\leq \phi_\ell(x), \psi_\ell(x) \leq \Crr{ddda},$$ 
for every $j$, and every branch $J\in \Lambda_j$ of $f^j$  that is not a diffeomorphism, and every $\ell< n$.   

Choose $A$ such that 
$A/\Crr{ddda} = \alpha^{2j}$.  Define $\omega_1(x)= A\phi_1(x)$,  $\omega_2(x)= A \phi_2 \circ \psi_1(Ax)$, $\dots$, $\omega_\ell(x)= A \phi_\ell \circ \psi_{\ell-1}(Ax)$ and $\omega_{n+1}(x)=\psi_n(Ax).$ Also  set $\tilde{E}_\ell(x) =  (D_\ell A^{-2-\gamma_\ell}x)^{1/(1+\gamma_\ell)}.$ Of course 
$$f^j(x)=  \omega_{n+1}\circ   \tilde{E}_n\circ   \omega_n     \circ \cdots                       \circ \tilde{E}_2\circ   \omega_2  \circ  \tilde{E}_1\circ  \omega_1(x).$$
Assume that $\phi_0(a_1)=0$ (the case $\phi_0(b_1)=0$ is similar).  Now define $U_i=[a_1+\delta,b_1]$, $\tilde{R}_ 0=[a_1,a_1+\delta]$,    $R_1=\omega_1([a_1,a_1+\delta])$, and recursively  $\tilde{R}_\ell= \tilde{E}_\ell(R_\ell)$ and $R_{\ell+1}= \omega_{\ell+1}(\tilde{R}_\ell)$. If $\delta > 0$ is small enough  we have 
$$\min_\ell \min\{ \min_{x\in \tilde{R}_{\ell-1}} \omega_\ell'(x),  \min_{x\in R_{\ell}} \tilde{E}_\ell'(x)\} > \alpha^{2j}$$
and
\begin{equation} \label{small}2j\alpha^{2j}|\tilde{R}_0| +\sum_\ell |\tilde{R}_\ell|+\sum_\ell |R_\ell| <   2^{-i}|I|\end{equation}

Let $\pi_\ell$, with $1\leq \ell\leq n+1$, and  $\tilde{\pi}_\ell$, with $0\leq \ell\leq n$,  be affine isometries,   where  $\pi_{n+1}(x)=\tilde{\pi}_0(x)=x$, and such that 
 $$\{ \pi_\ell(R_\ell)\}_{1\leq \ell\leq n}\cup\{ \tilde{\pi}_\ell(\tilde{R}_\ell)\}_{1\leq \ell \leq n}$$ is a family of pairwise disjoint intervals outside $I_i$ such that 
$$\tilde{\Lambda}^i=( \Lambda^i\setminus \{ J\}) \cup \{ [a_1+\delta,b_1], \tilde{R}_ 0 \} \cup \{ \pi_\ell(R_\ell)\}_{1\leq \ell\leq n}\cup\{ \tilde{\pi}_\ell(\tilde{R}_\ell)\}_{1\leq \ell \leq n}$$
is a partition of an interval $\tilde{I}_i=[c,d_i]\supset I_i$.  Note that  $f^j$ is a diffeomorphism on the   interval  $[a_1+\delta,b_1]$.     Define $\tilde{f}_i$ as $f_i$ on every element of $\Lambda^i\setminus \{J\}$ and  on $[a_1+\delta,b_1]$, as   $\tilde{\pi}_\ell\circ \tilde{E}_\ell \circ \pi_\ell^{-1}$ on $\pi_\ell(R_\ell)$, for every $1\leq \ell\leq n$, and as   $\pi_{\ell+1}\circ  \omega_\ell \circ \tilde{\pi}_\ell^{-1}$ on $\tilde{\pi}_\ell(\tilde{R}_\ell)$ for every $0\leq \ell\leq n$. We have $|\tilde{f}_i'|\geq \alpha^{2j}$ everywhere.

Let  $Y_0=\tilde{R}_0, Y_1, \dots,Y_{2(j-n)}$ be pairwise disjoint intervals that are outside $\tilde{I}_i$ and such that
$$\Lambda_{i+1}= \tilde{\Lambda}_i\cup \{ Y_1, \dots,Y_{2(j-n)}\}$$
is a partition of an interval $I_{i+1}$ and $|Y_i|=  \alpha^{i}   |\tilde{R}_0|$ for $i\leq 2(j-n)$.  Define $f_{i+1}$ as $\tilde{f}_i$ in $\tilde{I}_i\setminus \tilde{R}_0$, as the orientation preserving  affine map on $Y_i$ satisfying   $f_{i+1}(Y_i)=Y_{i+1}$,  for each $i< 2(j-n)$ and  as $f_{i+1}(x)= \tilde{f}_i(\theta(x))$  on $Y_{2(j-n)}$, where $\theta$ is the orientation preserving affine map such that $\theta(Y_{2(j-n)})=Y_0=\tilde{R}_0$, in particular $|\theta'|=\alpha^{-2(j-n)}$ and consequently 
$|f_{i+1}'(x)|= |\tilde{f}_i'(\theta(x))|\alpha^{-2(j-n)} \geq  \alpha^{2n}\geq \alpha^2.$

We conclude that  $|f_{i+1}'|\geq  \alpha$ everywhere.  This completes the recursive construction of $f_k$. 

Due (\ref{small}) we have $|I_{I+1}\setminus I_i|\leq 2^{-i} |I|$, so $|I_k|\leq 2|I|$ and $I_k\subset \hat{I}$. To conclude the proof of the claim, take 
$$A=(\Lambda_j\setminus \{J_1,\dots,J_k\})\cup \{U_1,\dots,U_k\},$$
 $B= I_k\setminus A$, $F_1$ equal to $f^j$ on $A$ and $F_2$ equals to $f_k^{2j+1}$ on $B$.

\end{proof}

\section{Generic piecewise expanding maps on $\mathbb{R}^D$} \label{pie}
Piecewise smooth expanding maps on $\mathbb{R}^D$  received a lot of attention. See G\'ora  and Boyarsky \cite{MR1029902}, Adl-Zarabi \cite{ad} and Saussol \cite{saussol}. The goal of this section is to obtain generic results for piecewise expanding maps on $\mathbb{R}^D$ as in Cowieson \cite{cowieson} \cite{cowieson1}. 

Let  ${I}_r$ be a finite piecewise partition made of  open sets   in $\mathbb{R}^D$, and $m\geq 1$.  Consider the set $\mathcal{D}^m_{exp}(\{I_r\}_r)$ of maps $F\colon I \rightarrow I$ so that 
\begin{itemize}
\item[A.] for each $r$ the map  $F\colon I_r \rightarrow I$ extends as a   $C^m$ diffeomorphism on  a open neighborhood of $\overline{I}_r$,
\item[B.]  there is $\lambda > 1$ such that $|f'(x)\cdot w| \geq  \lambda |w|$ for every $x\in I$, $w\in \mathbb{R}^D$.
\end{itemize}

One can give a topology on $\mathcal{D}^m_{exp}(\{I_r\}_r)$ considering the product  topology of the  $C^r$ topologies on each branch $F\colon I_r\rightarrow I$.

The remarkable result by Cowieson \cite{cowieson} \cite{cowieson1} shows  that in  an open and dense set of maps   $F \in \mathcal{D}^m_{exp}(\{I_r\}_r)$, with $m\geq 2$ and the partition  $\{I_r\}_r$ is a $C^m$ partition (see  Cowieson \cite{cowieson} for details)  then  the map $F$ has an absolutely continuous invariant probability and whose density has bounded variation. We improve this result with
\begin{theorem}\label{gen}  Let  $\{I_r\}_r$ be  a finite partition of a good $C^1$ domain in $\mathbb{R}^D$ such that every $I_r$ is a $N$-good $C^1$ domain with a regular Whitney stratification.  For a map $F$ in an open and dense subset of $\mathcal{D}^{1+\beta+\epsilon}_{exp}(\{I_r\}_r)$, with $\beta, \epsilon   > 0$, $0 < s < \beta < 1/D$,  the Perron-Frobenious operator $\Phi\colon \mathcal{B}^s_{1,q}\rightarrow \mathcal{B}^s_{1,q}$ is quasi-compact with
\begin{equation} \label{eer} \sigma_{ess}(\Phi, \mathcal{B}^s_{1,q})\leq (\inf_x \min_{|v|=1} |D_xF\cdot v|)^{-Ds}.\end{equation} 
and  it satisfies the Lasota-Yorke inequality for the pair $(L^1,\mathcal{B}^s_{1,q})$. Moreover every absolutely continuous invariant probability of $F$  has a density that is  $\mathcal{B}^s_{1,q}$-positive, so in particular its support is an open set of $I$ (up to a subset of zero Lebesgue measure).
\end{theorem}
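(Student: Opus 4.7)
The plan is to follow the template of the preceding sections: work with $F^j$ for large $j$, use generic transversality to control the combinatorics of the inverse branches, then assemble everything through Proposition 11.2 and Corollary 10.1 of \cite{smania-transfer}. Let $\Lambda_j$ denote the maximal finite partition of $I$ by open sets on each of which $F^j$ is a $C^{1+\beta+\epsilon}$ diffeomorphism onto its image. The hypothesis that every $I_r$ is an $N$-good $C^1$ domain with regular Whitney stratification, together with Cowieson's transversality argument \cite{cowieson}\cite{cowieson1} adapted to this setting, should yield an open and dense subset of $\mathcal{D}^{1+\beta+\epsilon}_{exp}(\{I_r\}_r)$ on which there is a sequence $N_j$, growing at most polynomially in $j$, such that every $J\in\Lambda_j$ and every $F^j(J)$ is an $N_j$-good $C^{1+\beta+\epsilon}$ domain. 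Corollary \ref{Ngood} then makes each $F^j(J)$ a $(1-\tfrac{1}{D},C_j,t_j)$-strongly regular domain, and a standard counting argument on the boundary combinatorics gives $\#\Lambda_j$ growing polynomially in $j$ as well.

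For each $J\in\Lambda_j$ let $h_J\colon F^j(J)\to J$ be the inverse branch and $g_J=|\det Dh_J|$ its Jacobian. The $C^{1+\beta+\epsilon}$ hypothesis gives, after one further universal refinement of $\Lambda_j$, the usual bounded-distortion estimate $|\ln g_J(x)-\ln g_J(y)|\leq L\,d(x,y)^{\beta+\epsilon}$ uniformly in $j$. Proposition \ref{holderr}.B then yields, for every $W\subset h_J(Q)$ with $W,Q\in\mathcal{D}$,
$$
|g_J\mathbf{1}_W|_{\mathcal{B}^\beta_{1,q}(W)}\;\leq\; C\,\frac{|Q|}{|h_J^{-1}(Q)|}\,|W|^{1-\beta},
$$
while Proposition \ref{def}, applied pointwise with $A=D_{x_0}F^j$ for a base point $x_0\in W$, shows that each $h_J(Q)$ is a $(1-s,C(J),\theta)$-regular domain in $\mathbb{R}^D$ for some fixed $\theta\in(0,1)$, with $C(J)$ controlled by the ratios of singular values of $DF^j$, and hence uniformly controlled by distortion.

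Feeding these $\Theta_J$ parameters into Proposition 11.2 of \cite{smania-transfer} and applying Corollary 10.1 therein, one obtains $\Phi^j=K_j+R_j$ with $K_j$ of finite rank and, at $p=1$,
$$
|R_j|_{\mathcal{B}^s_{1,q}}\;\leq\; C\,(\#\Lambda_j)\,\big(\inf_{x,\,|v|=1}|D_xF^j\cdot v|\big)^{-Ds-\epsilon'}
$$
for some small $\epsilon'>0$ absorbed into the distortion factor. Polynomial growth of $\#\Lambda_j$ against the exponential growth of the minimum singular value of $DF^j$ (at least $\lambda^j$ with $\lambda=\inf|D_xF\cdot v|>1$) then yields the bound (\ref{eer}) upon letting $j\to\infty$, hence quasi-compactness. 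The Lasota-Yorke inequality for the pair $(L^1,\mathcal{B}^s_{1,q})$ follows from Theorem \ref{trans-t1} in \cite{smania-transfer}, and the positivity of every absolutely continuous invariant density, with the consequent openness of its support modulo null sets, from Corollary \ref{trans-phy} there.

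The main obstacle is unquestionably the Cowieson step, i.e.\ the polynomial bound on $N_j$ and on $\#\Lambda_j$ generically. Concretely, one must show that for $F$ in an open and dense set, the sets $F^{-i}(\partial I_r)$ for $1\leq i\leq j$ form a union of embedded $C^{1+\beta+\epsilon}$ hypersurfaces meeting with multiplicity at most $N_j\lesssim j^a$ for some $a$. This is a simultaneous Baire-category transversality argument across the Whitney strata of $\cup_r\partial I_r$, performed at each level $j$ and then diagonalised; the low regularity $\beta+\epsilon$ requires some care, but once Corollary \ref{Ngood} is in place the $BV$ argument of \cite{cowieson} adapts.
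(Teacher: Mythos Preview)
Your sketch has a genuine gap: the claim that ``$\#\Lambda_j$ grows polynomially in $j$'' is false. For a piecewise expanding map with $k$ pieces, the dynamical partition $\Lambda_j$ has at least $e^{j\,h_{\mathrm{top}}(F)}$ elements, hence grows exponentially. Your final inequality
\[
|R_j|_{\mathcal{B}^s_{1,q}}\;\leq\; C\,(\#\Lambda_j)\,\big(\inf_{x,\,|v|=1}|D_xF^j\cdot v|\big)^{-Ds-\epsilon'}
\]
therefore does not give the bound (\ref{eer}): the exponential growth of $\#\Lambda_j$ competes with the exponential decay of the second factor, and you would end up with an estimate like $e^{h_{\mathrm{top}}(F)}\lambda^{-Ds}$ rather than $\lambda^{-Ds}$.

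The paper avoids this entirely. First, the Cowieson transversality argument is used to obtain a \emph{uniform} constant $B$ (independent of $n$) such that every piece $I_r^n\in\Lambda_n$ is a $B$-good $C^1$ domain with regular Whitney stratification --- not a polynomially growing $N_j$. This uniform $B$ yields both a uniform constant in Corollary~\ref{Ngood} (so every $I_r^n$, not $F^n(I_r^n)$, is a strongly regular domain with fixed constant) and, crucially, a uniform \emph{local multiplicity} bound
\[
M_n=\sup_{P\in\mathcal{P}^k,\;k\geq t_n}\#\{r\in\Lambda_n:I_r^n\cap P\neq\emptyset\}\leq B.
\]
Second, since $p=1$ (so $p'=\infty$), the slicing estimate from Theorem~\ref{trans-pa2} in \cite{smania-transfer} involves $M_n\cdot\sup_{r}\Theta_r$ rather than a sum $\sum_r\Theta_r^{p'}$. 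Thus $\#\Lambda_n$ never enters; only the local overlap $M_n\leq B$ does, and this is what lets the essential spectral radius bound go through. You have correctly identified the regular-domain machinery (Proposition~\ref{def}) and the H\"older potential estimate, but the combinatorial control must come from local multiplicity, not from the global cardinality of the partition.
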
 
\begin{proof} Let $p\in [1,\infty)$. By a transversality argument  as in Cowieson \cite{cowieson} \cite{cowieson1} one can prove that for an  open and dense set of maps $F$  in $\mathcal{D}^{1+\beta+\epsilon}_{exp}(\{I_r\}_r)$  the map $F^n$ is a piecewise expanding $C^{1+\beta+\epsilon}$ map defined in a partition $\{I_r^n\}_{r\in \Lambda_n}$  that consists of $B$-good $C^1$ domains with a regular Whitney stratification, for some $B$ that does not depend on $n$. Denote  by $h_r$ the corresponding inverse branches. By Proposition \ref{def} there is $\delta > 0$ such that if $Q\in \mathcal{D}$, $diam \ Q < \delta$ and $Q \subset I_r^n$ then $h_r^{-1}(Q)$ is  a
 $$(1-sp, \Crr{esssss}  (\Pi_{i\neq 1}  \frac{\alpha_i(Q)}{\alpha_1(Q)})^{sp}, \Crr{de000d}^{(1-\delta)(1-Dsp)})$$ regular domain in $(K,m,\mathcal{D})$. Here $0< \alpha_1(Q)\leq \alpha_2(Q) \leq \cdots \leq \alpha_n(Q)$ are such that $\{\alpha_i^2(Q)\}_i$ are  the eigenvalues of $A_QA^\star_Q$, with $A_Q=D_{x_Q}h_r^{-1}$ and $x_Q$ is an arbitrary element of $Q$. Here we must choose  $x\in Q$. Replacing $\{I_r^n\}_{r\in \Lambda_n}$ by an appropriated finer partition and increasing $B$ if necessary we can additionally  assume that $diam \ I_r^n < \delta$ for every $r\in \Lambda_n$ and  that for every $Q\in \mathcal{D}$, satisfying $Q \subset I_r^n$, $r \in \Lambda_n$ and $n$  we have that 
 $h_r^{-1}(Q)$ is  a
 $$(1-sp, \Cll{see}  (\Pi_{i\neq 1}  \frac{\alpha_i(r)}{\alpha_1(r)})^{sp}, \Crr{de000d}^{(1-\delta)(1-Dsp)})$$ regular domain in $(K,m,\mathcal{D})$. Here $0< \alpha_1(r)\leq \alpha_2(r) \leq \cdots \leq \alpha_n(r)$ are such that $\{\alpha_i^2(r)\}_i$ are  the eigenvalues of $A_rA^\star_r$, with $A_r=D_{x_r}h_r^{-1}$ and $x_r$ is an arbitrary element of $I_r$. So we can take
$$ \Crr{DGD1}(r)=\Crr{see}  (\Pi_{i\neq 1}  \frac{\alpha_i(r)}{\alpha_1(r)})^{sp}$$
 and
 $$ \Crr{DGD2}(r)= \Crr{de000d}^{(1-\delta)(1-Dsp)}.$$
 Notice that  (refining $\{I_r^n\}_r$ once again) 
 \begin{equation}\label{estiii} \frac{|Q|}{|h^{-1}_r(Q)|}   \leq \Cll{new} \Pi_{i}  \frac{1}{\alpha_i(r)} .\end{equation} 
 for some $\Crr{new}$ and every $Q\in I_r^n$, $r \in \Lambda_n$, with $Q\in \mathcal{D}$. On the other hand we have
 $$ \frac{1}{\Cll{new2}} \big(\frac{1}{2}\big)^{D(k_0(Q)-k_0(h_r^{-1}(Q))}  \leq \frac{1}{\alpha_1(r)^{D}} \leq \Crr{new2} \big(\frac{1}{2}\big)^{D(k_0(Q)-k_0(h_r^{-1}(Q)))}$$
  for some $\Crr{new2}$. In particular
  
 \begin{equation}\label{estii} |k_0(h_r^{-1}(Q))-k_0(Q)| \geq a_r= \frac{|\ln \alpha_1(r)|}{\ln 2} - \frac{\ln \Crr{new2}}{D\ln 2}.\end{equation} 
and if $\alpha_1 > 1$ then 
  $$ \frac{|Q|}{|h^{-1}_r(Q)|} \leq \Cll{new3} \big(\Pi_{i\neq 1}  \frac{\alpha_1(r)}{\alpha_i(r)} \big) \big(\frac{1}{2}\big)^{D|k_0(h_r^{-1}(Q))-k_0(Q)|}.$$
Take   $\Crr{DC2}= 1/2^{D}$ and 
 $$\Crr{DC1}=  \Crr{new3} \big(\Pi_{i\neq 1}  \frac{\alpha_1(r)}{\alpha_i(r)} \big) .$$

Since the Jacobian $g_r(x)= |Det \ D h_r|$ is  $(\beta+\epsilon)$-H\"older and $F$ is piecewise expanding, one can use the same argument as in the proof of Theorem \ref{marko} to conclude that 
$$|g_r1_W|_{\mathcal{B}^\beta_{p,q}(W,\mathcal{P}_W,\mathcal{A}^{sz}_{p,q})}  
\leq \Crr{DRP}(r) \Big(\frac{|Q|}{|h_r^{-1}(Q)|}\Big)^{1/p-s+\epsilon}   |W|^{1/p-\beta},$$
provided $W\in h_r^{-1}Q$, with $Q\subset I_r^n$, where
\begin{equation}\label{aqaq} \Crr{DRP}(r) =  \Crr{au} \Big(\Pi_{i}  \frac{1}{\alpha_i(r)}\Big)^{1-(1/p-s+\epsilon)},\end{equation}
and  $\Crr{au}$ does not depend on $r\in \Lambda_n$ and $n$. Here we may need to refine the partition  $\{I_r^n\}_{r\in \Lambda_n}$ again. Finally we obtain

\begin{align}\label{thetaest}
\Theta_r&= \Crr{DC1}^{\epsilon}(r) \Crr{DRP}(r)  \Crr{DGD1}^{1/p}  (1/2)^{D\epsilon a_r (1-\gamma_{_{DRS3}})}\nonumber  \\
 &\leq \Cll{new4} \big(\Pi_{i\neq 1}  \frac{\alpha_1(r)}{\alpha_i(r)} \big)^{\epsilon -s} \Big(\Pi_{i}  \frac{1}{\alpha_i(r)}\Big)^{1-(1/p-s+\epsilon)} \frac{1}{\alpha_1(r)^{D\epsilon (1-\gamma_{_{DRS3}})}} \nonumber \\
&\leq \Crr{new4}  \frac{1}{\alpha_1(r)^{D(s-\epsilon)} }  \Big(\Pi_{i}  \frac{1}{\alpha_i(r)}\Big)^{1-1/p} \frac{1}{\alpha_1(r)^{D\epsilon (1-\gamma_{_{DRS3}})}} \nonumber \\ 
&\leq \Crr{new4}  \Big(\Pi_{i}  \frac{1}{\alpha_i(r)}\Big)^{1-1/p} \frac{1}{\alpha_1(r)^{D(s- \epsilon\gamma_{_{DRS3}}) } }.
\end{align}

In particular for $p=1$ we have
$$ \Theta_r\leq  \Crr{new4} \alpha_1(r)^{-D(s- \epsilon\gamma_{_{DRS3}}) } \leq (\inf_x \min_{|v|=1} |D_xF\cdot v|)^{-Dn(s- \epsilon\gamma_{_{DRS3}})} .$$

Let  $t_n$ be such that 
$$M_n=\sup_{\substack{P\in \mathcal{P}^k \\ k\geq t_n}}\# \{r\in \Lambda_n \colon I_r^n\cap P\neq \emptyset    \} \leq B < \infty$$
for every $n$.  Due Corollary  \ref{Ngood} we can increase $t_n$ such that every $I_r^n$, with $r\in \Lambda_n$, is a 
$(1-\frac{1}{D},\Crr{srd2},t_n)$-strongly regular  domain.

We can apply Theorem \ref{trans-pa2} in  \cite{smania-transfer} with $p=1$ and 
$$T\leq B (\inf_x \min_{|v|=1} |D_xF\cdot v|)^{-Dn(s- \epsilon\gamma_{_{DRS3}})}$$ 
and Corollary \ref{trans-ww}  therein we  conclude that $\Phi^n$ has a $(\Cll[name]{DRSFR},\Cll[name]{DRSES})$-essential slicing  with 
$$\Crr{DRSES}\leq \Cll{new6} (\inf_x \min_{|v|=1} |D_xF\cdot v|)^{-Dn(s- \epsilon\gamma_{_{DRS3}})}$$
so it has a a similar upper bound for its essential spectral radius bounded and consequently (since $\epsilon$ can be taken arbitrarily small) 
$$\sigma_{ess}(\Phi, \mathcal{B}^s_{1,q})\leq (\inf_x \min_{|v|=1} |D_xF\cdot v|)^{-Ds}.$$

Moreover by Theorem \ref{trans-t1}  in \cite{smania-transfer} shows that the operator $\Phi^n$ satisfies the Lasota-Yorke inequality for the pair  $(L^1,\mathcal{B}^s_{1,q})$ and  Theorem \ref{trans-acim} proves that the support of the invariant measure is an open set up to a set of zero Lebesgue measure.
\end{proof}

\begin{remark}\label{winky} It is easy to construct examples of piecewise smooth metric-expanding maps for which that transfer operador is  quasi-compact.  For instance, consider the square $[0,1]^2$ with the dyadic good grid $\mathcal{D}$. Choose $k_0$, $\{I_r\}_r=\mathcal{D}^{k_0}$ and consider 
$$F\colon \cup_r I_r \rightarrow  [0,1]^2$$
such that $F\colon I_r \rightarrow Q_r$ is an affine bijection and $Q_r$ is choose to be one of the rectangles in the l.h.s. of Figure 2 (the largest square in the picture is $[0,1]^2$). If $k_0$ is large enough then the  "Winky Face" map  $F$ is piecewise metric-expanding map that satisfies the conclusions of Theorem \ref{gen}.\end{remark} 

Nakano and  Sakamoto \cite{ns} proved the quasi-compactness of the transfer operator of smooth expanding maps on manifolds with {\it no discontinuities} and they gave an estimate to the essential spectral radius. The estimate in  (\ref{eer}) is quite similar to their estimate in that case. One of main features of our methods it that it allows us to give a very good description of the support of the invariant measure. This is quite rare in high-dimensional settings, except if some additional  transitivity assumption holds. 

One may wonder  if the results for {\it all} piecewise expanding affine maps by Buzzi \cite{affineb} in the plane  and Tsujii \cite{tad} (in $\mathbb{R}^D$), as well   results for all  piecewise expanding real analytic maps by Buzzi \cite{MR1764923} and Tsujii \cite{tbv} can be generalised  for Besov spaces.

Consider solenoidal attractors as studied in Tsujii \cite{sole1} and Avila, Gou\"ezel  and Tsujii \cite{sole2}. This is an interesting case of study since   these  maps are  measure-expanding  but not metric-expanding maps.

Avila, Gou\"ezel  and Tsujii \cite{sole2} proved that  for generic solenoidal attractors  the support of the absolutely continuous  invariant measure of  a generic solenoidal attractor has non empty interior and  Bam\'{o}n,  Kiwi, Rivera-Letelier and Urz\'{u}a \cite{kiwi} proved that on certain  conditions the support  is an open set. It is an interesting question if one can use atomic decomposition methods to study transfer operators in this setting.

\vspace{1cm}
\centerline{ \bf V. COMPARING $\mathcal{B}^s_{p,q}$ WITH OTHER FUNCTION SPACES IN LITERATURE.}
\addcontentsline{toc}{chapter}{\bf V. COMPARING $\mathcal{B}^s_{p,q}$ WITH OTHER FUNCTION SPACES IN LITERATURE.}
\vspace{1cm}

Bellow we  show to the reader that the class of observables for which we obtain results for the quasi-compactness of the transfer operator and also good statistical properties is quite wide, and often include previous function spaces that appears in the literature on transfers operators. Note that we do not claim that functions of Besov spaces on measure spaces with good grid always have good statistical properties for {\it all} the dynamical systems the cited authors took under consideration.

\section{Keller's  spaces} The most influential result in  the study of transfer operators for potentials with low regularity ($p$-bounded variation  potentials) in one-dimensional dynamics was obtained by  Keller \cite{keller}. Following Keller's notation, given an interval $I$ and $y \in I$  define $S_\epsilon(y)=\{x\in I\colon \ |x-y|< \epsilon\}.$ and
$$osc(h,\epsilon,y)= ess \ sup \{ |h(x_1)-h(x_2)|, (x_1,x_2)\in S_\epsilon(y)\times S_\epsilon(y)\},$$
where the essential sup is taken with respect to the Lebesgue measure on $I\times I$. Define
$$OSC_1(h,\epsilon)= \int osc(h,\epsilon,y) \ dm(y),$$
and define 
$$var_{1,1/p}(h)=\sup_{0<\epsilon \leq  1} \frac{OSC_1(h,\epsilon)}{\epsilon^{1/p}}.$$
The Keller's space of functions of generalized bounded variation $BV_{1,1/p}$  is the space of functions in $L^1(m)$ with the norm
$$||f||_{BV_{1,1/p}}= ||f||_{L^1(m)} + var_{1,1/p}(f).$$
Keller considered  a piecewise expanding map of the interval $I$ with a finite partition  and such that $1/f'$ has  $p$-bounded variation. He proved the such transfer operator acts as a quasi-compact  operator on $BV_{p,1/p}$. The relation with our Banach spaces is given by

\begin{proposition} Let $\mathcal{D}$ be the dyadic  partition of the interval $I=[0,1]$ and $0<s <1$. We have $BV_{1,s} \subset \mathcal{B}^{s}_{1,\infty}(\mathcal{D})$. Moreover the corresponding  inclusion is continuous.
\end{proposition}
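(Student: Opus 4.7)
The plan is to construct an explicit atomic representation of $f$ through the dyadic martingale and to bound its coefficients by $var_{1,s}(f)$ via the oscillation integral. Denote by $E_k f$ the conditional expectation of $f$ onto the $\sigma$-algebra generated by $\mathcal{D}^k$, so that $(E_k f)(x)=f_P:=|P|^{-1}\int_P f\,dm$ for $x\in P\in\mathcal{D}^k$. Since $f\in BV_{1,s}\subset L^1(m)$, the martingale $E_k f$ converges to $f$ in $L^1(m)$ and a.e., and telescoping yields
$$
f \;=\; f_{[0,1]}\, a_{[0,1]} \;+\; \sum_{k\ge 0}\sum_{P\in\mathcal{D}^{k+1}} c_P\, a_P,
$$
where $a_P=|P|^{s-1}\mathbf{1}_P$ is the Souza-type atom used in the paper, $P^*\in\mathcal{D}^k$ denotes the dyadic parent of $P\in\mathcal{D}^{k+1}$, and $c_P=(f_P-f_{P^*})|P|^{1-s}$. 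This will exhibit $f$ as an element of $\mathcal{B}^s_{1,\infty}(\mathcal{D})$ as soon as I control $\sup_{k}\sum_{P\in\mathcal{D}^k}|c_P|$.

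The core estimate is a pointwise oscillation bound at each dyadic level. For $P\in\mathcal{D}^{k+1}$ the parent $P^*$ satisfies $P^*\subset S_{2|P|}(x)$ for every $x\in P$, so by the very definition of $osc$,
$$
|f(x)-f_{P^*}|\;\le\;\frac{1}{|P^*|}\int_{P^*}|f(x)-f(y)|\,dm(y)\;\le\;osc(f,2|P|,x)\qquad\text{for a.e. } x\in P.
$$
Averaging this over $x\in P$, multiplying by $|P|$, and summing over $P\in\mathcal{D}^{k+1}$ collapses the right-hand side into a single integral over $[0,1]$:
$$
\sum_{P\in\mathcal{D}^{k+1}} |P|\,|f_P-f_{P^*}|\;\le\;OSC_1(f,2^{-k}).
$$
Factoring out $|P|^{-s}=2^{(k+1)s}$ from the definition of $c_P$ and invoking $OSC_1(f,\varepsilon)\le\varepsilon^{s}\,var_{1,s}(f)$ gives $\sum_{P\in\mathcal{D}^{k+1}}|c_P|\le 2^{s}\,var_{1,s}(f)$. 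Since $|c_{[0,1]}|=|f_{[0,1]}|\le\|f\|_{L^1(m)}$, one obtains the uniform bound
$$
\sup_{k}\sum_{P\in\mathcal{D}^k}|c_P|\;\le\; 2^{s}\,\|f\|_{BV_{1,s}},
$$
which yields $|f|_{\mathcal{B}^s_{1,\infty}(\mathcal{D})}\lesssim\|f\|_{BV_{1,s}}$ and the continuous inclusion.

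The only delicate point I anticipate is the bookkeeping required to match the formal definition of the norm on $\mathcal{B}^s_{1,\infty}(\mathcal{D})$ given in \cite{smania-besov}: one must confirm that the indicator $\mathbf{1}_{[0,1]}$ is an admissible Souza atom at level $0$ and that the $q=\infty$ aggregation of the atomic norm is equivalent (up to constants depending only on the grid) to $\sup_{k}\sum_{P\in\mathcal{D}^k}|c_P|$ over admissible representations. Once that identification is in place, the argument uses nothing beyond $OSC_1(f,\varepsilon)\le\varepsilon^{s}\,var_{1,s}(f)$ and the $L^1$-martingale convergence of $E_k f$ to $f$.
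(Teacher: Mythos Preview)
Your proof is correct and follows essentially the same route as the paper: both bound the level-$k$ quantity by $2^{ks}\,OSC_1(f,2^{-k})\le 2^{s}\,var_{1,s}(f)$ via the pointwise inequality $|f(x)-f(y)|\le osc(f,\varepsilon,x)$ for $y\in S_\varepsilon(x)$. The only packaging difference is that the paper works with the mean oscillations $osc_1(f,Q)=\inf_c\int_Q|f-c|\,dm$ and invokes the oscillation characterisation of $\mathcal{B}^s_{1,\infty}$ (Theorem~\ref{besov-alte} in \cite{smania-besov}), whereas you build the martingale/Souza atomic representation by hand; since $|P|\,|f_P-f_{P^*}|\le\int_P|f-f_{P^*}|\,dm$ and $\inf_c\int_Q|f-c|\le\int_Q|f-f_Q|$ are two faces of the same coin, the arguments coincide up to that citation.
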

\begin{proof}
 Indeed, let $f\in BV_{1,s} $. If $x_Q\in Q$, where $Q$ is an interval, Then
\begin{align} osc_1(f,Q)&=\inf_c \int_Q |f(x)-c| \ dm(x)\leq \frac{1}{|Q|}\int_Q \int_Q |f(x)-f(y)| \ dm(x)dm(y)\nonumber \\
&\leq    \frac{1}{|Q|}\int_Q \int_Q osc(f,|Q|,x) \ dm(x)dm(y)=   \int_Q osc(f,|Q|,x) \ dm(x) \nonumber
\end{align}
 so we have 
\begin{align}
\sum_{Q\in \mathcal{D}^i} |Q|^{-s} osc_1(f,Q)    &\leq   2^{is} \sum_{Q\in \mathcal{D}^i}  \int_Q osc(f,|Q|,x) \ dm(x)  \nonumber \\
&\leq   2^{is}  OSC_1(f,2^{-i}) \nonumber \\   
&\leq  2^{s}  ||f||_{BV_{1,s}} \nonumber.
\end{align}
We complete the proof applying Theorem  \ref{besov-alte}.
\end{proof}

Note that $BV_{1,s} \not= \mathcal{B}^{s}_{1,\infty}(\mathcal{D})$ since we know (Keller \cite{keller})  that $BV_{1,s}\subset L^\infty(m)$ and there are unbounded  functions in the Besov space $\mathcal{B}^{s}_{1,\infty}(\mathcal{D})$.  Saussol \cite{saussol} used Keller's approach to study transfer operator in higher dimensions.

\section{Liverani's spaces} Liverani \cite{liverani} presented a new approach to deal with one-dimensional transfer operators with low regularity. His methods were extended successfully to some higher dimensional settings.  For every $s\in (0,1)$  he consider the Banach space $\mathcal{B}_s$ of all complex-valued borelian functions  in $L^{1/s}(I,m)$, with $I=[-\pi,\pi]$, for which the norm 
\begin{equation}\label{bs} |f|_{\mathcal{B}_s}= \sup \{ \big| \int g' f\ dm \big| \ \colon   g \in C^1, |g|_{C^s(I)}\leq 1  \}\end{equation}
is finite.  Liverani consider certain classes of piecewise Holder continous potentials and piecewise expanding maps on $I$ with an infinite partition and gave estimate to the essential spectral radius of  the associated transfer operator acting  on $\mathcal{B}_s$.  

\begin{proposition} Let $\mathcal{D}$ be the good grid of  dyadic  partitions of the interval $I=[0,1]$ and $0<s <1$. We have $\mathcal{B}^{s}_{1,1} \subset \mathcal{B}_{1-s}\subset \mathcal{B}^{s}_{1,\infty} $. Moreover these inclusions are continuous. 
\begin{proof} Let $f\in \mathcal{B}^{s}_{1,1}(\mathcal{D})$ and 
$$f= \sum_{k} \sum_{Q\in \mathcal{P}^k} c_Q  a_Q$$
be a  $\mathcal{B}^{s}_{1,1}(\mathcal{D})$-representation of $f$.  Let $g \in C^1, |g|_{C^{1-s}(I)}\leq 1 $. Then
\begin{align} |\int g' f \ dm |&=  |\sum_{k} \sum_{Q\in \mathcal{P}^k} c_Q \int g' a_Q \ dm| \nonumber \\
&\leq   \sum_{k} \sum_{Q\in \mathcal{P}^k} |c_Q| |Q|^{s-1}|\int_Q g'  \ dm| \nonumber \\
&\leq   \sum_{k} \sum_{Q\in \mathcal{P}^k} |c_Q|.
\end{align} 
So $|f|_{\mathcal{B}_{1-s}}\leq |f|_{\mathcal{B}^{s}_{1,1}}.$ 
Note that for the dyadic partition $\mathcal{D}$ and Lebesgue measure $m$,  the  Haar basis indexed by $\hat{\mathcal{H}}(I)= \{ I\}\cup_{Q\subset I} \mathcal{H}_Q$ constructed in Section \ref{besov-haar} in \cite{smania-besov}  is just the classical Haar basis, since  for every $Q\in \mathcal{D}^k$ there are exactly two intervals $Q_1,Q_2 \in \mathcal{D}^{k+1}$ such that $Q_i\subset Q$ satisfying   $\mathcal{H}_Q=\{ S^Q  \}$, where $S=( S_1^Q,S_2^Q)$, with $S_i^Q =\{Q_i\}$. Then
$$\phi_{S^Q} =  \frac{1}{|Q|^{1/2}}  \Big( 1_{Q_1}-1_{Q_2} \Big).$$
Let $f \in \mathcal{B}_{1-s}$. Then $f\in L^{1/{(1-s)}}(m)$, so 
$$f = \sum_{S\in \hat{\mathcal{H}}(I)} d_S \phi_S,$$
where
$$d_S= \int f \phi_S \ dm.$$
if $d_S\geq 0$ set  $\hat{\phi}_S=\phi_S$, otherwise let  $\hat{\phi}_S=-\phi_S$. Define
$$\psi_k = \sum_{Q\in \mathcal{D}^k} \hat{\phi}_{S^Q}.$$
Then
$$\int f \psi_k \ dm =\sum_{Q\in \mathcal{D}^k} |d_{S^Q}|.$$
Note that $\psi_k$  is the derivative of the Lipschitz function 
$$\Psi_k (x)=\int_0^x \psi_k \ dm,$$
and satisfies $|\psi_k|_\infty\leq 2^{-k/2}.$ Let $x, y \in I$, with $x\leq y$ and 
$$[a,b]=\overline{ \bigcup_{\substack{_{Q\in \mathcal{D}^k}\\ _{Q\subset [x,y] }  } }Q}.$$
Since 
$$\int_Q \psi_k \ dm = \int_Q \hat{\phi}_{S^Q} \ dm =0$$
and $|Q|=2^{-k}$  for every $Q\in \mathcal{D}^k$   we have that 
\begin{align} |\Psi_k (y)-\Psi_k (x)|&\leq |\Psi_k (y)-\Psi_k (b)|+|\Psi_k (a)-\Psi_k (x)\nonumber \\
&\leq  2^{k/2}(|y-b|+|z-x|)^{s}(|y-b|+|z-x|)^{1-s}\nonumber \\
&\leq  2^{k/2+s-ks}|y-x|^{1-s}, \nonumber  \end{align}
The last inequality follows from $\max \{ |y-b|,|z-x|\}\leq 2^{-k}$. 
So $$|\psi_k|_{C^{1-s}(I)}\leq~C~2^{(1/2-s)k}$$ and  by Liverani \cite{liverani}
$$\sum_{Q\in \mathcal{D}^k} |d_{S^Q}|= \int f \psi_k \ dm  \leq C 2^{(1/2-s)k}|f|_{\mathcal{B}_{1-s}}.$$
so 
$$ \sum_{Q\in \mathcal{D}^k} |d_{S^Q}||Q|^{1/2-s} \leq C   |f|_{\mathcal{B}_{1-s}}.$$
So by Theorem  \ref{besov-alte} in \cite{smania-besov} we have that $f \in \mathcal{B}_{1,\infty}^{s}$. 


\section{Thomine's   result for  Sobolev spaces}  Thomine \cite{thomine}  studied the action of the transfer operator of certain piecewise expanding maps on $\mathbb{R}^n$ on the classical Sobolev spaces 
$$\mathcal{H}^s_p(\mathbb{R}^n)=\{ u\in  L^p(\mathbb{R}^n)\colon      \mathcal{F}^{-1}((1+|\eta|^2)^{s/2} (\mathcal{F}u)(\eta))  \in L^p(\mathbb{R}^n)  \}.$$

Here $\mathcal{F}$ is  the Fourier transform on $\mathbb{R}^n$. It is well known that $\mathcal{H}^s_p(\mathbb{R})$ coincides with the Besov space $\mathcal{B}^s_{p,p}(\mathbb{R})$, whose restriction to the interval $I=[0,1]$   is exactly $\mathcal{B}^s_{p,p}(\mathcal{D})$, where $\mathcal{D}$ is the sequence of  dyadic partitions of $[0,1]$.

\section{Butterley's spaces}  Let $I$ be an  interval. Butterley \cite{butter} proved the quasi-compactness of  certain transfer operators with piecewise H\"older potentials acting on a Banach space $\mathfrak{B}_s \subset L^1(m)$, where $m$ is the Lebesgue measure on $I$, and

$$||f||_{\mathfrak{B}_s}:=\inf \{\sup_{0< k\leq 1} \Big(  k^{-s}|f_k-f|_{L^1(m)}+k^{1-s}||f_k||_{BV(I)} \Big)       \}$$
where the infimum runs over all possible families $\{f_k  \}_{0<k\leq 1}\subset L^1(m)$. 

\begin{proposition} We have $\mathfrak{B}_s\subset \mathcal{B}^s_{1,\infty}(\mathcal{D})$. Moreover the corresponding  inclusion is continuous.
\end{proposition}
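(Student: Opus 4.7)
The plan is to adapt the argument just given for Keller's space. By Theorem \ref{besov-alte} in \cite{smania-besov} (invoked already two proofs above for $BV_{1,s}$), membership in $\mathcal{B}^s_{1,\infty}(\mathcal{D})$ with a norm estimate will follow once I establish, for some constant $C$,
\begin{equation*}
\sup_{i\geq 0}\; 2^{is}\sum_{Q\in\mathcal{D}^i} osc_1(f,Q) \;\leq\; C\,||f||_{\mathfrak{B}_s},
\end{equation*}
together with the ambient bound $|f|_{L^1(m)}\leq C\,||f||_{\mathfrak{B}_s}$, where $osc_1(f,Q)=\inf_{c\in\mathbb{R}}\int_Q|f-c|\,dm$.

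Given $f\in\mathfrak{B}_s$ I would fix, for each $\varepsilon>0$, a family $\{f_k\}_{0<k\leq 1}\subset L^1(m)$ realising $||f||_{\mathfrak{B}_s}$ up to $\varepsilon$, and at each dyadic scale $i\geq 0$ specialise to $k=2^{-i}\in(0,1]$. The core step will be the subadditive split $osc_1(f,Q)\leq osc_1(f_k,Q)+osc_1(f-f_k,Q)$. For the first summand, the elementary interval bound $osc_1(f_k,Q)\leq |Q|\cdot var(f_k,Q)$ (achieved by taking $c$ to be the essential infimum of $f_k$ on $Q$), combined with disjointness of the $Q\in\mathcal{D}^i$ and the fact that they tile $I=[0,1]$, produces $\sum_Q osc_1(f_k,Q)\leq 2^{-i} var(f_k,I)\leq 2^{-i}||f_k||_{BV(I)}$. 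For the second summand, taking $c=0$ gives $osc_1(f-f_k,Q)\leq\int_Q|f-f_k|\,dm$, so the sum over $Q\in\mathcal{D}^i$ telescopes (by disjointness and coverage of $I$) to $|f-f_k|_{L^1(m)}$. Multiplying by $2^{is}=k^{-s}$ recombines the two pieces into exactly $k^{1-s}||f_k||_{BV}+k^{-s}|f-f_k|_{L^1}$, i.e.\ the very expression whose supremum in $k\in(0,1]$ and infimum over realising families is $||f||_{\mathfrak{B}_s}$. The $L^1$ control on $f$ itself will come for free by specialising to $i=0$ (so $k=1$) and using the continuous embedding $BV(I)\hookrightarrow L^1(m)$.

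The only real obstacle I foresee is cosmetic bookkeeping: matching Butterley's precise convention for $||\cdot||_{BV(I)}$ (in particular whether $|\cdot|_{L^1(m)}$ is built into it, which is what makes the ambient $L^1$ step work for constant functions) and absorbing the constant coming from the equivalence in Theorem \ref{besov-alte}. Neither affects the inclusion itself, only the explicit constant in the continuity estimate. Letting the realising family approach the infimum in $||f||_{\mathfrak{B}_s}$ and $\varepsilon\to 0$ will then deliver the claimed continuous embedding $\mathfrak{B}_s\subset\mathcal{B}^s_{1,\infty}(\mathcal{D})$.
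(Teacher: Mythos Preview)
Your proposal is correct and follows essentially the same approach as the paper: specialise the approximating family to $k=2^{-i}$, split $osc_1(f,Q)$ subadditively between $f_k$ and $f-f_k$, control the $BV$ piece by $|Q|\cdot var(f_k,Q)$ and the remainder by the $L^1$ distance, then invoke Theorem~\ref{besov-alte} in \cite{smania-besov}. The paper's version is terser (it absorbs the $\varepsilon$-approximation into a factor of $2$ in the bounds on $g_i$) but the argument is the same.
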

\begin{proof}
 Indeed, let $f\in \mathbb{B}_s$. So choosing $k=2^{-i}$ we can find a family $\{g_i  \}_{i\in \mathbb{N}} \subset L^1(m)$ such that 
$$|g_i-f|_{L^1(m)}\leq 2^{-is+1} ||f||_{\mathfrak{B}_s} \ and \ ||g_i||_{BV(I)} \leq 2^{i(1-s)+1} ||f||_{\mathbb{B}_s}.$$
for every $i \in \mathbb{N}$. So
\begin{align}
\sum_{Q\in \mathcal{D}^i} |Q|^{-s} osc_1(f,Q)    &\leq  2||f||_{\mathfrak{B}_s}  + 2^{is}  \sum_{Q\in \mathcal{D}^i}  osc_1(g_i,Q) \nonumber \\
&\leq  2||f||_{\mathfrak{B}_s}  + 2^{i(s-1)}  ||g_i||_{BV(I)} \nonumber \\   
&\leq  4||f||_{\mathfrak{B}_s}\nonumber.
\end{align}
We complete the proof applying Theorem  \ref{besov-alte} in \cite{smania-besov}.
\end{proof}

\bibliographystyle{abbrv}
\bibliography{bibliografia}

\end{document}